\newcommand{\figwidth}{}%
\newcommand{\url}[1]{#1} 
\definecolor{gray}{rgb}{0.2,0.2,.2}
\definecolor{colorGreen}{rgb}{0.,0.67,0}
\definecolor{colorRed}{rgb}{0.67,0,0}
\definecolor{colorBlue}{rgb}{0.,0.,0.67}
\newcommand{\EMHC}{\color{black}}
\DeclareMathOperator{\laplace}{\Delta}
\newcommand{\fspace}[1]{{\mathsf{#1}}}
\newcommand{\fspaceL}{\fspace{L}}
\newcommand{\fspaceH}{\fspace{H}}
\newcommand{\fspaceC}{\fspace{C}}
\newcommand{\fspaceW}{\fspace{W}}
\newcommand{\Rset}{{\mathbb{R}}}
\newcommand{\Zset}{{\mathbb{Z}}}
\newcommand{\DO}[1]{{O\at{#1}}}
\newcommand{\nDO}[1]{{O\nat{#1}}}
\newcommand{\loc}{\mathrm{loc}}
\newcommand{\fin}{{\rm fin}}
\newcommand{\ini}{{\rm ini}}
\newcommand{\supp}{{\rm supp}}
\newlength{\mhpicDwidth}
\newlength{\mhpicDvsep}
\newlength{\mhpicDhsep}
\newlength{\mhpicPwidth}
\newlength{\mhpicPvsep}
\newlength{\mhpicPhsep}
\newlength{\mhpicWhsep}
\newcommand{\pair}[2]{{\left({#1},\,{#2}\right)}}
\newcommand{\bpair}[2]{{\big({#1},\,{#2}\big)}}
\newcommand{\at}[1]{{\left({#1}\right)}}
\newcommand{\nat}[1]{(#1)}
\newcommand{\bat}[1]{{\big(#1\big)}}
\newcommand{\Bat}[1]{{\Big(#1\Big)}}
\newcommand{\ato}[1]{{\left[{#1}\right]}}
\newcommand{\triple}[3]{{\left({#1},\,{#2},\,{#3}\right)}}
\newcommand{\D}{\displaystyle}
\newcommand{\bigpar}{\par\quad\newline\noindent}
\newcommand{\jump}[1]{{|\![#1]\!|}}
\newcommand{\norm}[1]{\|{#1}\|}
\newcommand{\abs}[1]{\left|{#1}\right|}
\newcommand{\babs}[1]{\big|{#1}\big|}
\newcommand{\nabs}[1]{|{#1}|}
\newcommand{\dint}[1]{\,\mathrm{d}#1}
\newcommand{\Om}{{\Omega}}
\newcommand{\al}{{\alpha}}
\newcommand{\ga}{{\gamma}}
\newcommand{\eps}{{\varepsilon}}
\newcommand{\si}{{\sigma}}
\newcommand{\calD}{\mathcal{D}}
\newcommand{\calE}{\mathcal{E}}
\newcommand{\calM}{\mathcal{M}}
\newcommand{\calR}{\mathcal{R}}
\newcommand{\bbR}{\mathbb{R}}
\newcommand{\bbZ}{\mathbb{Z}}
\DeclareMathOperator{\sgn}{sgn}
\newcommand{\set}[2][\empty]{\ensuremath{%
    \left\{%
      \ifx\empty#1%
      \relax%
      \else%
      #1:%
      \fi%
      #2%
    \right\}%
  }%
}
\newcommand{\pdiff}[2][\empty]{\ensuremath{%
        \ifx\empty#1%
        \frac{\partial}{\partial{#2}}%
        \else%
        \frac{\partial{#1}}{\partial{#2}}%
        \fi%
}}
\newcommand{\diff}[2][\empty]{\ensuremath{%
        \ifx\empty#1%
        \frac{\mathrm{d}}{\mathrm{d}{#2}}%
        \else%
        \frac{\mathrm{d}{#1}}{\mathrm{d}{#2}}%
        \fi%
}}
\newcommand{\tdiff}[2][\empty]{{\textstyle\diff[#1]{#2}}}
\newcommand*\if@single[3]{%
  \setbox0\hbox{${\mathaccent"0362{#1}}^H$}%
  \setbox2\hbox{${\mathaccent"0362{\kern0pt#1}}^H$}%
  \ifdim\ht0=\ht2 #3\else #2\fi
  }
\newcommand*\rel@kern[1]{\kern#1\dimexpr\macc@kerna}
\newcommand*\widebar[1]{\@ifnextchar^{{\wide@bar{#1}{0}}}{\wide@bar{#1}{1}}}
\newcommand*\wide@bar[2]{\if@single{#1}{\wide@bar@{#1}{#2}{1}}{\wide@bar@{#1}{#2}{2}}}
\newcommand*\wide@bar@[3]{%
  \begingroup
  \def\mathaccent##1##2{%
    \if#32 \let\macc@nucleus\first@char \fi
    \setbox\z@\hbox{$\macc@style{\macc@nucleus}_{}$}%
    \setbox\tw@\hbox{$\macc@style{\macc@nucleus}{}_{}$}%
    \dimen@\wd\tw@
    \advance\dimen@-\wd\z@
    \divide\dimen@ 3
    \@tempdima\wd\tw@
    \advance\@tempdima-\scriptspace
    \divide\@tempdima 10
    \advance\dimen@-\@tempdima
    \ifdim\dimen@>\z@ \dimen@0pt\fi
    \rel@kern{0.6}\kern-\dimen@
    \if#31
      \overline{\rel@kern{-0.6}\kern\dimen@\macc@nucleus\rel@kern{0.4}\kern\dimen@}%
      \advance\dimen@0.4\dimexpr\macc@kerna
      \let\final@kern#2%
      \ifdim\dimen@<\z@ \let\final@kern1\fi
      \if\final@kern1 \kern-\dimen@\fi
    \else
      \overline{\rel@kern{-0.6}\kern\dimen@#1}%
    \fi
  }%
  \macc@depth\@ne
  \let\math@bgroup\@empty \let\math@egroup\macc@set@skewchar
  \mathsurround\z@ \frozen@everymath{\mathgroup\macc@group\relax}%
  \macc@set@skewchar\relax
  \let\mathaccentV\macc@nested@a
  \if#31
    \macc@nested@a\relax111{#1}%
  \else
    \def\gobble@till@marker##1\endmarker{}%
    \futurelet\first@char\gobble@till@marker#1\endmarker
    \ifcat\noexpand\first@char A\else
      \def\first@char{}%
    \fi
    \macc@nested@a\relax111{\first@char}%
  \fi
  \endgroup
}
\theoremstyle{plain}
\newtheorem{theorem}             {Theorem}[section]
\newtheorem{corollary}  [theorem]{Corollary}
\newtheorem{lemma}      [theorem]{Lemma}
\newtheorem*{result}{Main result}
\theoremstyle{definition}
\newtheorem{definition} [theorem]{Definition}
\newtheorem{assumption} [theorem]{Assumption}
\theoremstyle{remark}
\newtheorem{remark}     [theorem]{Remark}
\numberwithin{figure}{section}
\numberwithin{table}{section}
\numberwithin{equation}{section}
\begin{document}


\title{Interface dynamics in discrete forward-backward diffusion equations}

\date{\today}

\author{%
  Michael Helmers\footnote{
    {\tt{helmers@iam.uni-bonn.de}},
    Institut f\"ur Angewandte Mathematik, Universit\"at Bonn.}
  \and
  Michael Herrmann\footnote{
    {\tt{michael.herrmann@math.uni-sb.de}},
    Fachrichtung Mathematik, Universit\"at des Saarlandes.}
}

\maketitle


\begin{abstract}
  We study the motion of phase interfaces in a diffusive lattice
  equation with bistable nonlinearity and derive a free boundary
  problem with hysteresis to describe the macroscopic evolution in the
  parabolic scaling limit.
  
  The first part of the paper deals with general bistable
  nonlinearities and is restricted to numerical experiments and
  heuristic arguments. We discuss the formation of macroscopic data
  and present numerical evidence for pinning, depinning, and
  annihilation of interfaces. Afterwards we identify a generalized
  Stefan condition along with a hysteretic flow rule that characterize
  the dynamics of both standing and moving interfaces.

  In the second part, we rigorously justify the limit dynamics for
  single-interface data and a special piecewise affine
  nonlinearity. We prove persistence of such data, derive upper bounds
  for the macroscopic interface speed, and show that the macroscopic
  limit can indeed be described by the free boundary problem. The
  fundamental ingredient to our proofs is a representation formula
  that links the solutions of the nonlinear lattice to the discrete
  heat kernel and enables us to derive macroscopic compactness results
  in the space of continuous functions.
\end{abstract}


\small

~\newline\noindent%
\begin{minipage}[t]{0.15\textwidth}%
  Keywords: 
\end{minipage}%
\begin{minipage}[t]{0.8\textwidth}%
  \emph{forward-backward diffusion in lattices,
    coarse graining for gradient flows,\\
    hysteretic models for phase transitions,
    pinning and depinning of interfaces,\\
    regularization of ill-posed parabolic PDEs}
\end{minipage}%
\medskip
\newline\noindent
\begin{minipage}[t]{0.15\textwidth}%
  MSC (2010): %
\end{minipage}%
\begin{minipage}[t]{0.8\textwidth}%
  34A33, 
  35R25, 
  37L60, 
  74N20, 
  74N30  
\end{minipage}%

\normalsize


\setcounter{tocdepth}{3}
\setcounter{secnumdepth}{3}{\small\tableofcontents}


\section{Introduction}

Discrete forward-backward diffusion equations appear in many different
applications such as edge-detection in digital images \cite{PeMa90},
models for population dynamics based upon random walks on lattices
\cite{HoPaOt04}, or phase transition problems with supercooling and
superheating \cite{Elliott85}. In all applications it is a common
major problem to understand how the backward-parabolic regions affect
the dynamics on large scales.
In this paper, we study the diffusion lattice
\begin{equation}
  \label{eq:lattice}
  \dot{u}_j\at{t}
  =
  \laplace \Phi^\prime\at{u_j\at{t}}
\end{equation}
for $j \in \bbZ$, $t \geq 0$, or equivalently
\begin{equation}
  \label{eq:lattice2}
  \dot{w}_j\at{t}
  =
  \nabla_- \Phi^\prime\at{\nabla_+w_j\at{t}}.
\end{equation}
Here $\nabla_-$, $\nabla_+$ are the left- and right-sided discrete
difference operators, $\laplace$ denotes the standard discrete
Laplacian $\laplace p_j= p_{j+1}-2p_j+p_{j-1}$, and $u_j$, $w_j$ are
connected via $u_j=\nabla_+ w_j$.
As illustrated in
Figure~\ref{fig:potential}, we always suppose that $\Phi^\prime$ the
derivative of a double-well potential $\Phi$, so it consists of two
stable branches that enclose an unstable one.
\begin{figure}[ht!]%
  \centering%
  \includegraphics[width=0.9\textwidth]{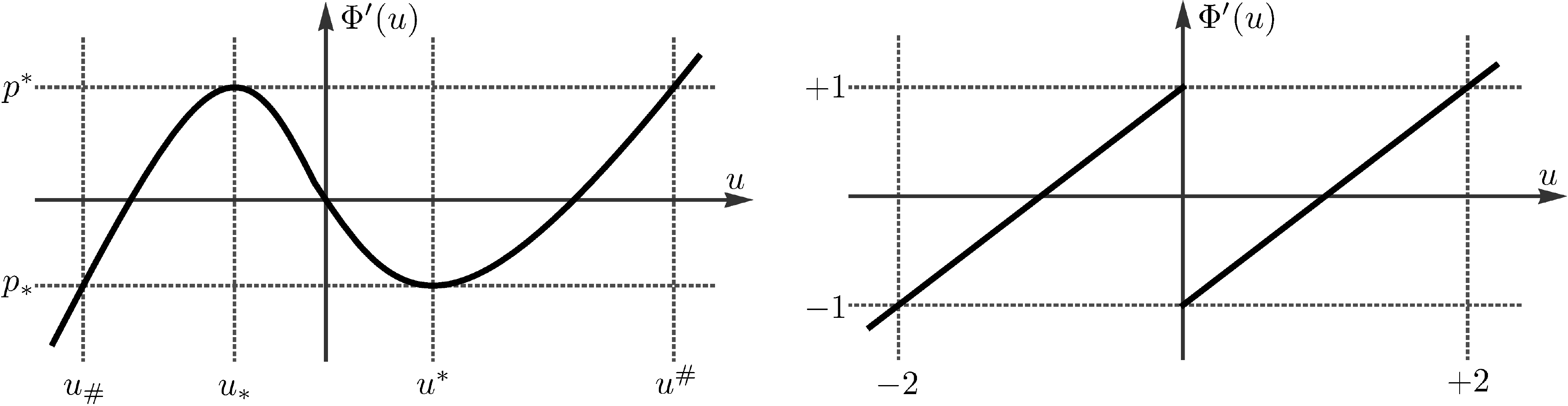}%
  \caption{\emph{Left.} Bistable derivative of a general double-well
    potential $\Phi$. The increasing and decreasing branches of
    $\Phi'$ are called \emph{stable} and \emph{unstable},
    respectively, while \emph{spinodal region} refers to the
    interval $[u_*,u^*]$ on which $\Phi^\prime$ is
    decreasing.  \emph{Right.} Piecewise affine derivative of the
    degenerate double-well potential that is studied in
    \S\ref{sect:ToyModel} and corresponds to $u_\#=-2$, $u_*=u^*=0$,
    $u^\#=+2$.}
  \label{fig:potential}%
\end{figure}%
\bigpar
Our goal is to characterize the effective dynamics of
\eqref{eq:lattice} in the parabolic scaling limit. Interpreting $t
\geq 0$ and $j\in\Zset$ as the microscopic variables, we introduce the
macroscopic time $\tau \geq 0$ and space $\xi\in\Rset$ by
\begin{equation}
  \label{eq:scaling}
  \tau=\eps^2 t,\qquad \xi = \eps{j},
\end{equation}
where $\eps>0$ is a small scaling parameter. We formally
identify
\begin{align}
  \label{eq:identification}
  u_j\at{t} = U\pair{\eps^2t}{\eps j}
\end{align}
and aim to describe the evolution of $U$ in the limit $\eps \to 0$.
The scaling \eqref{eq:scaling} and \eqref{eq:identification}
transforms the microscopic dynamics \eqref{eq:lattice} into
\begin{align}
  \label{eq:Scaled.Lattice}
  \partial_\tau U=\laplace_\eps \Phi^\prime(U),
\end{align} 
where $\laplace_\eps$ is the standard finite difference approximation
of $\partial_\xi^2$ on $\eps\Zset$, so the \emph{na\"{i}ve continuum
  limit} as $\eps \to 0$ reads
\begin{equation}
  \label{eq:general-eq}
  \partial_\tau U
  =
  \partial_\xi^2 \Phi'(U).
\end{equation}
This PDE, however, is ill-posed due to the unstable branches of
$\Phi^\prime$ and can therefore not determine the macroscopic limit of
\eqref{eq:lattice} completely.
Actually, the lattice can be viewed as a \emph{regularization} of
\eqref{eq:general-eq} that accounts for small scale effects and
provides in the limit $\eps\to0$ additional dynamical information such
as laws for the motion of phase interfaces or even measure-valued
solutions.
Other notable regularizations of \eqref{eq:general-eq} are the
Cahn-Hilliard equation
\begin{align}
  \label{eq:CahnHilliard}
  \partial_\tau U
  =
  \partial_\xi^2 \Phi'(U)-\eps^2\partial_\xi^4 U,
\end{align}
which has enjoyed a lot of attention over the last decades, and the
viscous approximation
\begin{equation}
  \label{eq:PDERegularization}
  \left( 1 - \eps^2 \partial^2_\xi \right) \partial_\tau U
  =
  \partial_\xi^2 \Phi'(U),
\end{equation}
studied in \cite{NoCoPe91,Plotnikov94,EvPo04}. As discussed below, the
available results indicate that the macroscopic limits of the scaled
lattice equation \eqref{eq:Scaled.Lattice} and the viscous
approximation \eqref{eq:PDERegularization} are identical but different
from the limit of the Cahn-Hilliard model.
In a formal way this can be understood by expanding the spatial
operators in powers of $\eps$: Equations \eqref{eq:lattice} and
\eqref{eq:PDERegularization} share (up to a redefinition of $\eps$)
the same leading order terms according to
\begin{align*}
  \laplace_\eps P
  =
  \Bat{\partial_\xi^2 +
    \frac{\eps^2}{12} \partial_\xi^4 +\DO{\partial_\xi^6 }} P
  \,,\qquad \at{1-\eps^2\partial_\xi}^{-1}\partial_\xi^2 P
  =\at{ \partial_\xi^2+\eps^2\partial_\xi^4 P+\DO{\partial_\xi^6}}P,
\end{align*}
with $P$ being shorthand for $\Phi^\prime(U)$, while
\eqref{eq:CahnHilliard} replaces the right hand side in
\eqref{eq:general-eq} by $\partial_\xi^2P-\eps^2\partial_\xi^4 U$.
For $\eps>0$, however, the rescaled lattice and the viscous
approximation are different and it remains open whether there exists a
unified theory that is capable of describing the limit $\eps\to0$ for
both models.

\bigpar
A key feature of any regularization of \eqref{eq:general-eq} with
double-well potential $\Phi$ are \emph{phase interfaces}, which evolve
according to certain jump conditions and separate regions where $U$
attains values in different stable regions (\emph{phases}).
Other types of nonconvex potentials give rise to different phenomena
such as coarsening of localized spikes, see for instance
\cite{EsGr09}.

Numerical simulations of \eqref{eq:lattice} as performed in
\S\ref{sec:Numerics} with a generic double-well potential provide
evidence for the existence of two different types of phase
interfaces. \emph{Type-I interfaces} correspond to piecewise smooth
functions $U$ and separate regions where $U$ takes values in either
one of the phases $U<u_*$ and $U>u^*$, where $[u_*,u^*]$
is the spinodal interval. \emph{Type-II interfaces}, however, are
related to measure-valued solutions of \eqref{eq:general-eq} and model
a phase mixture on at least one side of the interface. For both types,
a phase interface can have a fixed position or move, depending on the
behavior of $P=\Phi^\prime\at{U}$ near the interface.
While $P$ is smooth across a standing interface and takes values in
$[p_*,p^*] = [\Phi^\prime\at{u^*}, \Phi^\prime\at{u_*}]$, a moving
interface is driven by a jump in $\partial_\xi P$ but requires either
$P=p_*$ or $P=p^*$ subject to the propagation direction. In the
macroscopic limit we therefore find hysteretic behavior in the sense
that fronts moving into different phases comply with different
constraints, see Figure~\ref{fig:hysteresis}.
Further intriguing properties of the macroscopic lattice dynamics are
sketched in Figure~\ref{fig:interfaces}. Driven by the bulk diffusion,
a standing interface can suddenly start to move (\emph{depinning}) and
a moving interface can eventually come to rest
(\emph{pinning}). Moreover, two interfaces can disappear after a
collision (\emph{annihilation}).

\begin{figure}
  \centering
  \includegraphics[width=0.99\textwidth]{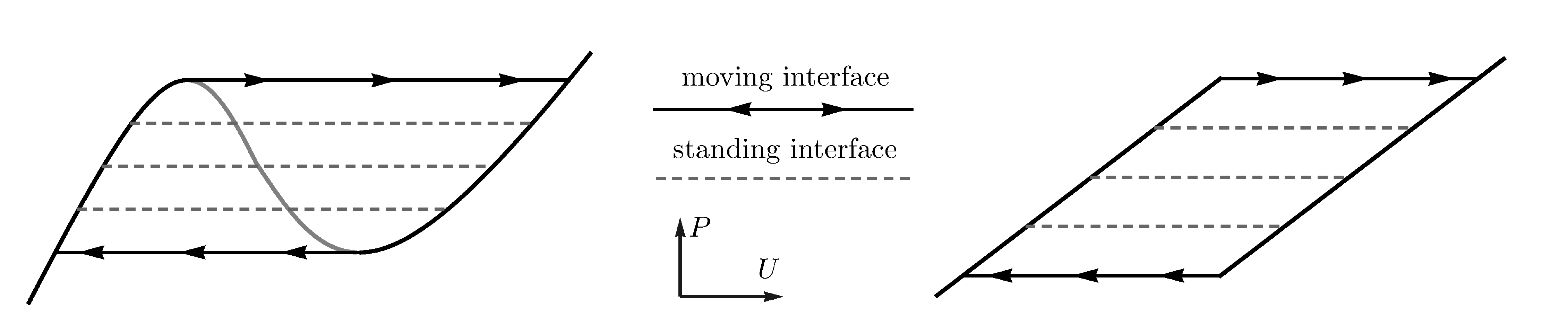}
  \caption{\emph{Left.} Cartoon of the macroscopic hysteresis for the
    potentials from Figure \ref{fig:potential}. The arrows indicate
    the temporal jump of $U$ when it undergoes a phase transition at a
    fixed position $\xi$. In particular, $P=p^*$ holds at any
    interface that moves into the phase $U<u_*$, whereas propagation
    into $U>u^*$ requires $P=p_*$. The dashed lines represent standing
    interfaces, at which $P$ takes values in $[p_*,p^*]$.
  }
  \label{fig:hysteresis}
  \bigskip
  \centering
  \includegraphics[width=0.7\textwidth]{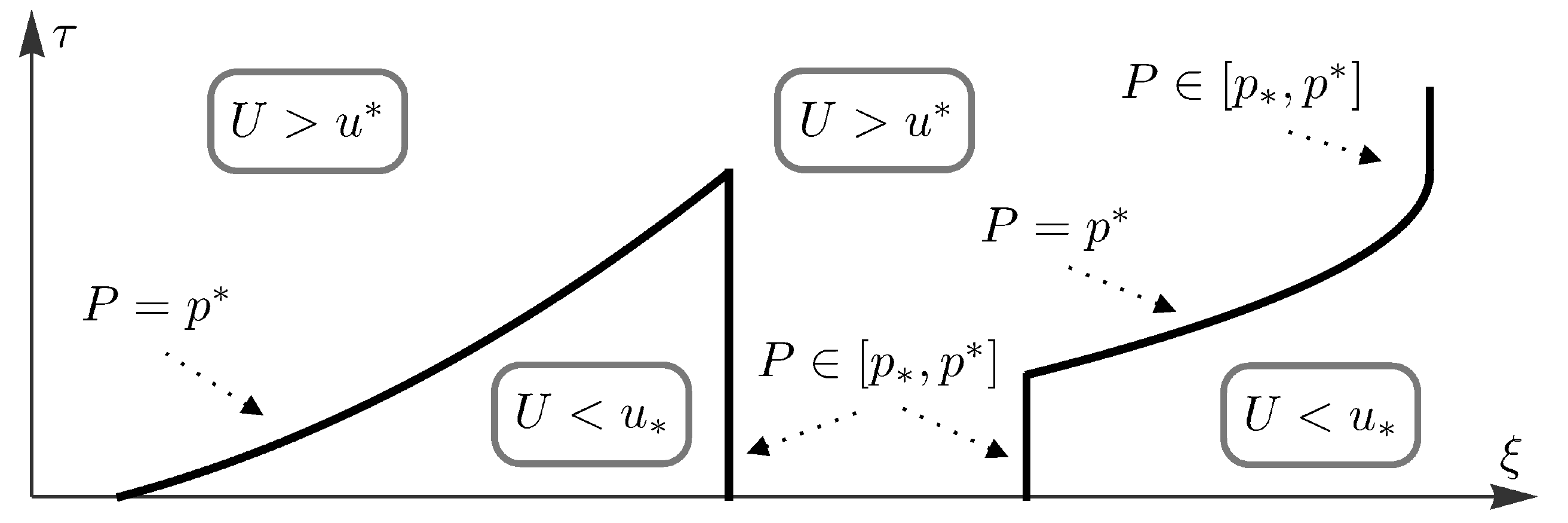}
  \caption{Cartoon of three macroscopic type-I interfaces.  The first
    interface (moving) and the second one (standing) eventually
    collide with each other and disappear (\emph{annihilation}).  The
    third one is initially at rest, starts to move at a later
    time(\emph{depinning}) and stops eventually again
    (\emph{pinning}).}
  \label{fig:interfaces}
\end{figure}

Assuming that the lattice data $p_j=\Phi^\prime\at{u_j}$ converge as
$\eps\to0$ to a sufficiently regular function $P$ and that any phase
interface is of type I, the dynamics in the parabolic scaling limit
can described by combining bulk diffusion via \eqref{eq:general-eq}
with the generalized Stefan condition
\begin{align}
  \label{eq:Intro.Stefan}
  \jump{P}=0,
  \qquad
  \frac{\dint \xi^*}{\dint\tau}\jump{U} + \jump{\partial_\xi{P}}
  =
  0
\end{align}
and the hysteretic flow rule
\begin{align}
  \label{eq:Intro.Flow}
  P = p_*\quad\text{for}\quad\frac{\dint
    \xi^*}{\dint\tau}\jump{U}>0,
  \qquad\qquad
  P = p^*\quad\text{for}\quad\frac{\dint \xi^*}{\dint\tau}\jump{U}<0,
\end{align}
where $\xi^*\at\tau$ is the position of an interface and
$\jump{\cdot}$ denotes the jump across this interface. These
conditions have also been proposed in \cite{EvPo04} to model the propagation
of type-I interfaces in the limit of the viscous approximation and are
naturally related to the notion of \emph{entropy solutions}, see also
\cite{MaTeTe09} and the discussion below.
For the Cahn-Hilliard equation \eqref{eq:CahnHilliard}, the parabolic
scaling limit does not imply any hysteresis. Here each interface
corresponds to $P=0$ and evolves therefore according to the classical
Stefan condition, see for instance \cite{BeBeMaNo12} for a rigorous
proof. We also note that there exists at least one other macroscopic
limit for \eqref{eq:CahnHilliard}, which is, however, not related to
the parabolic scaling \eqref{eq:scaling}: In the regime of almost
vanishing bulk diffusion, interfaces move and merge on a much slower
time scale which is exponentially small in $\eps$
\cite{AlBaFu91,BrHi92}.

In the case of hysteretic interface motion, there seems to be no
rigorous result -- neither for the lattice nor the viscous
approximation -- that derives \eqref{eq:Intro.Stefan} and
\eqref{eq:Intro.Flow} rigorously from the dynamics for $\eps>0$. 
Previous results for the lattices \eqref{eq:lattice} or
\eqref{eq:lattice2} are either restricted to standing interfaces, see
\cite{GeNo11} and \cite{BeGeNo13} for type-I and type-II interfaces,
respectively, or do not capture the dynamics of moving interfaces
completely, e.\,g. \cite{BeNoPa06}.

As a first step towards a mathematical justification of the
macroscopic evolution laws for type-I interfaces, we study in
\S\ref{sect:ToyModel} the special case of
\begin{align}
  \label{eq:Intro.ToyPotential}
  \Phi\at{u}=\tfrac12\min\big\{\at{u-1}^2,\,\at{u+1}^2\big\}.
\end{align}
At the cost of being discontinuous at $u=0$, the derivative
$\Phi^\prime$ of \eqref{eq:Intro.ToyPotential} has two advantages over
a generic bistable function. First, the nonlinearity in
\eqref{eq:lattice} is piecewise affine and second, the spinodal region
has shrunk to the point $u=0$, see the right panel of
Figure~\ref{fig:potential}. These properties simplify the dynamical
system \eqref{eq:lattice} significantly and enable us to represent
solutions to the nonlinear lattice by a summation formula that
involves delayed and shifted versions of the discrete heat kernel.

Due to the degenerate nature of \eqref{eq:Intro.ToyPotential}, it is
not our intention to identify the most general class of admissible
initial data for which the macroscopic limit can be described by a
free boundary problem. On the contrary, in order to keep the
presentation as simple as possible, we restrict our considerations in
\S\ref{sect:ToyModel} to initial data that produce a single type-I
interface which cannot change its direction of propagation. Our main
findings are formulated in Theorems
\ref{thm:existence-single-interface}, \ref{thm:existence}, and
\ref{thm:uniqueness}, and can informally be summarized as follows.

\begin{result}
  The lattice \eqref{eq:lattice} with \eqref{eq:Intro.ToyPotential}
  has the following properties.
  \begin{enumerate}
  \item \emph{Microscopic single-interface solutions}: Type-I
    interfaces are naturally related to a class of lattice states that
    is invariant under the dynamics. 
    Specifically, intervals
    of linear diffusion are interrupted by an increasing sequence of
    \emph{phase transition times} $\at{t^*_k}_{k\geq k_1}$ such that
    $u_k$ switches from negative to positive sign at $t=t^*_k$.
\item \emph{Macroscopic evolution:} When starting with macroscopic
    single-interface initial data, the limit $\eps\to0$ can be characterized as follows.
        
\begin{enumerate}
  \item \emph{Convergence}: The lattice data $u_j$ converge in
    a strong sense to a function $U$ that is smooth
    outside of an interface curve $\tau\mapsto\xi^*\at\tau$, where
    $\xi^*$ is Lipschitz continuous and nondecreasing. Moreover, the
    function $P=U-\sgn{U}$ is continuous across the interface.

  \item \emph{Limit dynamics}: For almost all times $\tau\geq0$ we
    have either
    \begin{align*}
      P\pair{\tau}{\xi^*\at\tau}\in[-1,+1]\quad
      \text{and}\quad \frac{\dint}{\dint\tau}\xi^*\at\tau=0
    \end{align*}
    or
    \begin{align*}
      P\pair{\tau}{\xi^*\at\tau}=+1\quad \text{and}\quad
      \frac{\dint}{\dint\tau}\xi^*\at\tau>0,
    \end{align*}
    and $P$ solves the linear heat equation outside of the interface.

  \item \emph{Uniqueness}: $U$ and $\xi^*$ are uniquely determined by
    the macroscopic initial data and the limit model.
  \end{enumerate}

  \end{enumerate}
\end{result}

The rest of the paper is organized as follows. In
\S\ref{sec:Gradient.Structure} we employ the gradient flow structure
of \eqref{eq:lattice} to describe the formation of macroscopic data on
a heuristic level.  Afterwards we report on our numerical
investigations for general double-well potentials.  We present several
examples for the macroscopic motion of type-I and type-II interfaces
in \S\ref{sec:Macroscopic.Interfaces}, and proceed in
\S\ref{sec:Microscopic.Interfaces} with discussing the key features of
the microscopic dynamics near moving interfaces.  These are:
sequentiality of phase transitions, small-scale fluctuations, and
existence of multiple time scales. Finally, in
\S\ref{sec:Limit.Models} we give a more detailed description of the
macroscopic limit model for type-I interfaces and interpret the
hysteretic flow rule in terms of entropy inequalities.

\S\ref{sect:ToyModel} contains our analytical results for the special
case \eqref{eq:Intro.ToyPotential}. We first employ ODE arguments in \S\ref{sec:exist-single-interf}
in order to
prove the persistence of single-interface data.  In particular, in Corollary
\ref{cor:decay-regular-velocities} we establish the aforementioned
representation formula. In \S\ref{sec:interface-speed} we then
introduce the concept of macroscopic single-interface initial data and
derive upper bounds for the macroscopic interface speed from the
properties of the discrete heat kernel, see also Appendix
\ref{sec:discrete-heat-kernel}. The main technical work is done in
\S\ref{sec:macr-cont-comp}, were we establish macroscopic compactness
results for $\xi^*$ and $P$ in the spaces of Lipschitz and H\"older
continuous functions, respectively. In \S\ref{sec:PassageToLimit} we
finally pass to the limit $\eps\to0$.  To this end, we first justify
the limit model along subsequences, and obtain afterwards both
uniqueness and convergence by adapting some arguments from the
theory of free boundary problems with hysteresis operators.


\section{Heuristic arguments and numerical simulations}
\label{sec:Numerics}

In this section we employ heuristic arguments as well as numerical
simulations in order to gain a qualitative understanding of the key
dynamical features of the nonlinear lattice diffusion with double-well
potential. In particular, we discuss $\at{i}$ the underlying gradient
flow structure and the formation of macroscopic data during a fast
initial transient regime, $\at{ii}$ the microscopic and macroscopic
dynamics of phase interfaces, and $\at{iii}$ the macroscopic evolution
equations in the limit $\eps\to0$.
\par
To keep the presentation as simple as possible, we use a finite
dimensional lattice with $j=-N,\ldots,N$ and close the resulting
ODE system by imposing homogeneous Neumann boundary conditions 
\begin{align}
\label{eq:num.bound.cond}
u_{-N-1}\at{t}=u_{-N}\at{t}\,,\qquad u_{N+1}\at{t}=u_{N}\at{t}.
\end{align}
The natural scaling parameter on such a finite lattice is $\eps=1/N$, that
means the macroscopic space variable $\xi$ takes values in the
interval $[-1,1]$. The numerical simulations presented
below are computed by the explicit Euler scheme, where the time step
size is chosen sufficiently small so that the energy, see
\eqref{eq:energy1} below, is strictly decreasing. Moreover, all
simulations are performed with
\begin{align*}
\Phi\at{u}=\frac{2\at{1-u^2}^2}{1+u^2},\qquad
\Phi^\prime\at{u}=4u-\frac{16u}{\at{1+u^2}^2},
\end{align*}
which is convenient for numerical computations since the linear growth
of $\Phi^\prime$ for $\abs{u}\to\infty$ allows to use a relatively
large time-step size.
%
%
\subsection{Gradient flow structure and onset of macroscopic data}
\label{sec:Gradient.Structure}
%
%
%
The lattice diffusion \eqref{eq:lattice} can be regarded as a discrete analogue to the $\fspaceH^{-1}$-gradient flow
of a nonlinear bulk energy. More precisely, defining the energy
\begin{align}
\label{eq:energy1}
\calE\at{u}:=\eps\sum_{j=-N}^{N} \Phi\at{u_j}
\end{align}
and the metric potential 
\begin{align*}
\calR\at{\dot{u}}=\frac{\eps}{2} \sum_{j=-N}^{N} \bat{\nabla_+ v_j}^2\quad\text{with}\quad -\laplace v_j = \dot{u}_j
\quad\text{for}\quad j=-N,\ldots,N
\quad \text{and}\quad v_{\pm \at{N+1}}=v_{\pm N}
\end{align*}
we readily verify -- using discrete integration by parts along with the boundary conditions \eqref{eq:num.bound.cond} -- that \eqref{eq:lattice} is equivalent to
\begin{align*}
\partial_{\dot{u}}\calR\at{\dot{u}}+\partial_u \calE\at{u}=0,
\end{align*}
where the metric tensor $\partial_{\dot{u}}\calR$ is formally given by $\at{-\laplace}^{-1}$.
In particular,  we obtain
the energy balance 
\begin{align*}
\frac{\dint\calE}{\dint{t}}=\eps^2\frac{\dint\calE}{\dint{\tau}}=-\eps^2\calD,\qquad \calD\at{u}:=\eps^{-1}\sum_{j=-N}^N \bat{\nabla_+\Phi^\prime\at{u_j}}^2,
\end{align*}
where the dissipation $\calD$ gives the squared and rescaled length of the energy gradient with respect to the metric induced by $\calR$. Notice that $\calE$, $\calR$, and $\calD$ are scaled macroscopically, that means 
the identification \eqref{eq:identification} implies
\begin{align}
\label{eq:LD.Formal.Limit1}
\calE\at{U}=\int_{-1}^{+1} \Phi\at{U}\dint\xi,\qquad
\calD\at{U}=\int_{-1}^{+1}\at{\partial_\xi \Phi^\prime\at{U}}^2\dint\xi
\end{align}
as well as
\begin{align}
\label{eq:LD.Formal.Limit2}
\calR\at{\partial_\tau U}=\tfrac12\int_{-1}^{+1}\at{\partial_\xi V}^2\dint\xi\quad\text{with}\quad
-\partial_{\xi}^2V=\partial_\tau{U}\quad\text{and}\quad \partial_\xi{V}|_{\xi=\pm1}=0
\end{align}
provided that $U$ is sufficiently smooth with respect to $\tau$ and
$\xi$. The formal gradient flow corresponding to
\eqref{eq:LD.Formal.Limit1} and \eqref{eq:LD.Formal.Limit2}, however,
is the ill-posed PDE \eqref{eq:general-eq} and can hence not govern
the limit dynamics.
\par
A further important observation is that the nonlinear lattice
\eqref{eq:lattice} admits a comparison principle on the increasing
branches of $\Phi^\prime$. Specifically, using standard arguments for
ODEs we easily show
\begin{align*}
\sup\limits_{t\geq0} \sup\limits_{\abs{j}\leq{N}}{u_j\at{t}}\leq \max\big\{u^\#,\sup\limits_{\abs{j}\leq{N}}u_j\at{0}\big\},\qquad
\inf\limits_{t\geq0} \inf\limits_{\abs{j}\leq{N}}{u_j\at{t}}\geq \min\big\{u_\#,\inf\limits_{\abs{j}\leq{N}}u_j\at{0}\big\}.
\end{align*}    
This implies $\calE\at{t}=\DO{\eps{N}}=\DO{1}$ for all $t\geq0$ and
hence $\int_0^\infty\calD\at{u\at{t}}\dint{t}=\nDO{1}$ provided that
the initial data $u_j\at{0}$ are bounded independently of $\eps$.

\renewcommand{\figwidth}{0.4\textwidth}
\begin{figure}[t!]%
\centering{%
\begin{tabular}{ccc}%
\includegraphics[width=\figwidth]{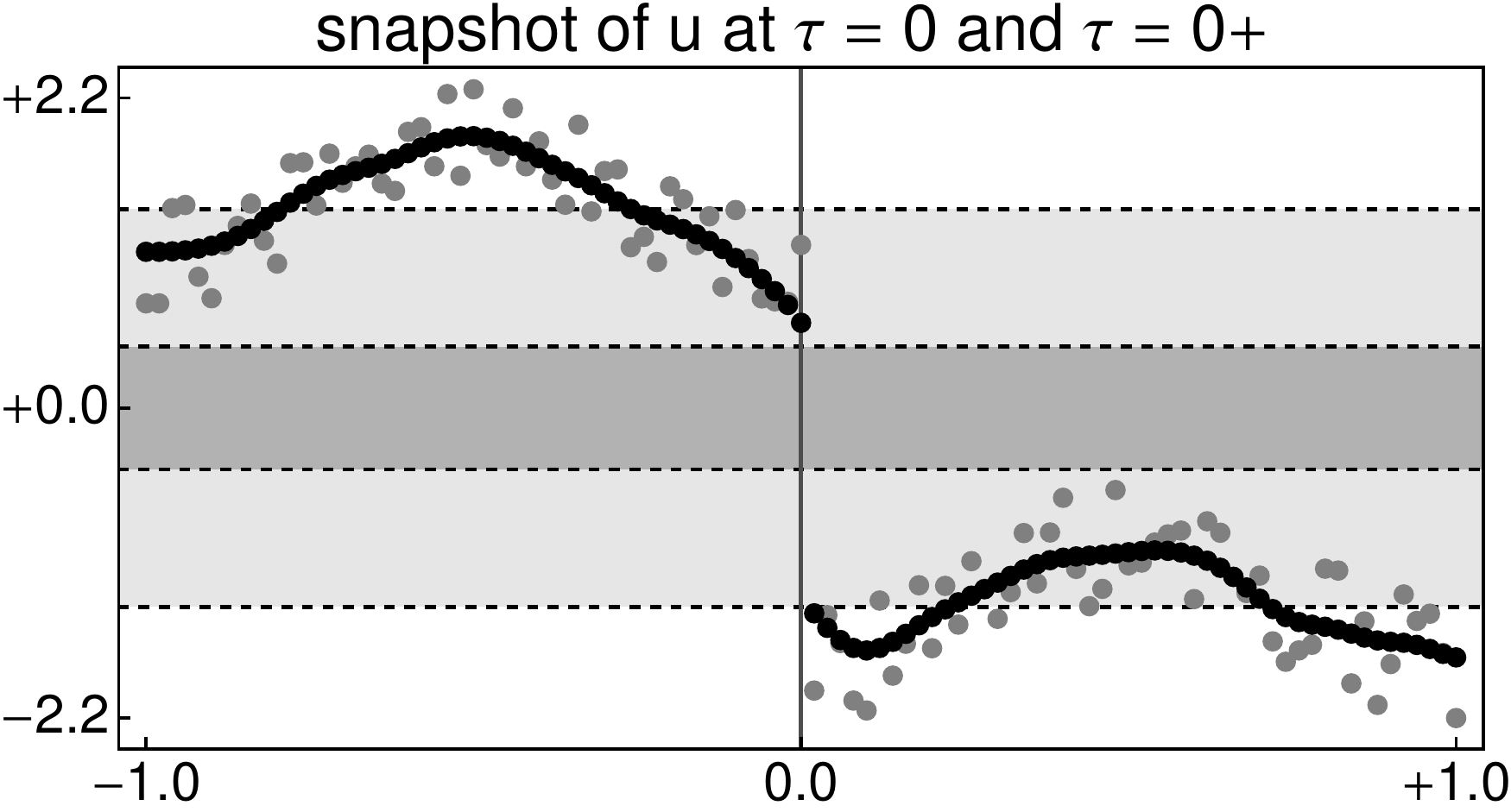}%
&\quad&%
\includegraphics[width=\figwidth]{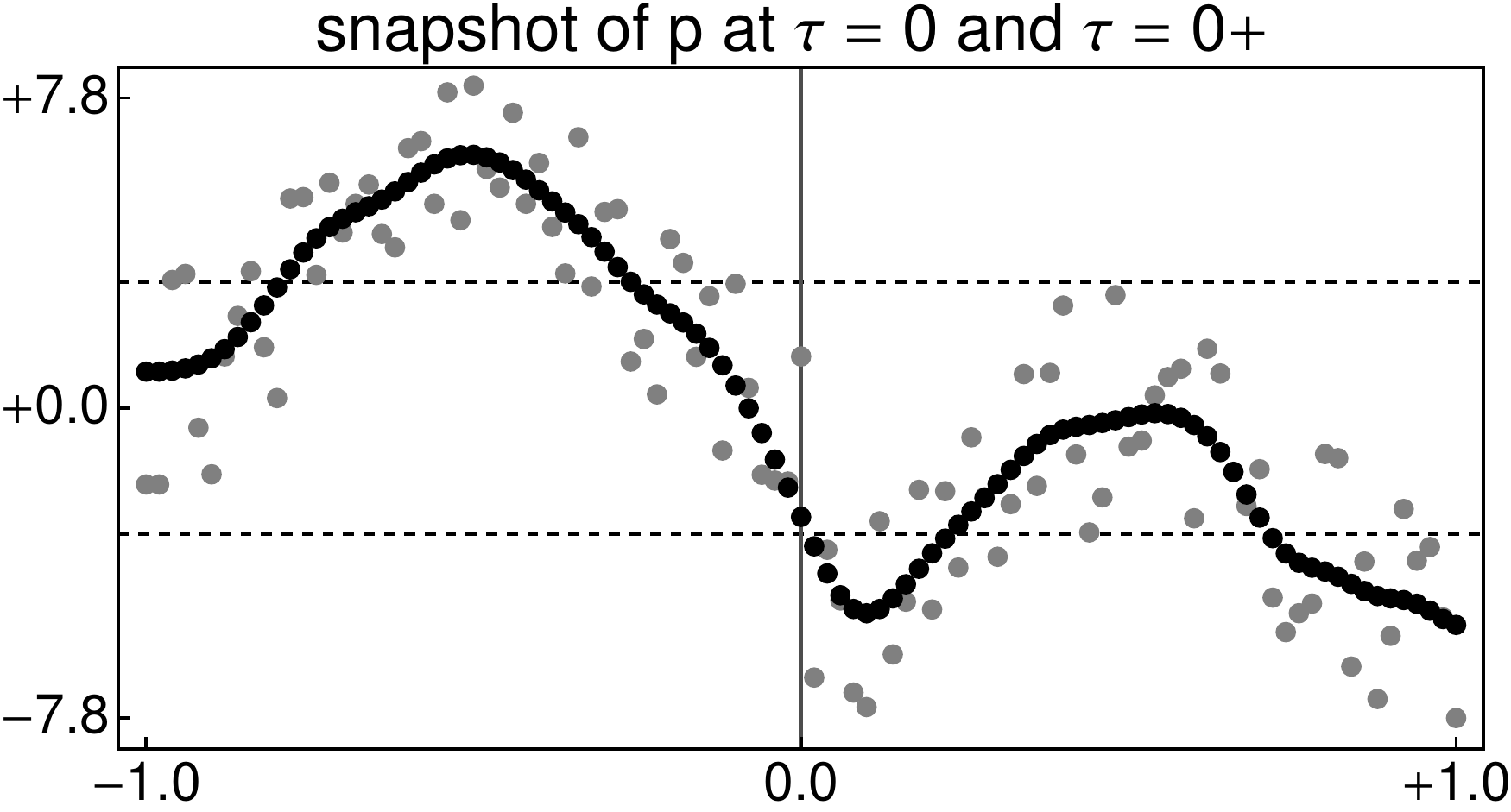}%
\\%
\includegraphics[width=\figwidth]{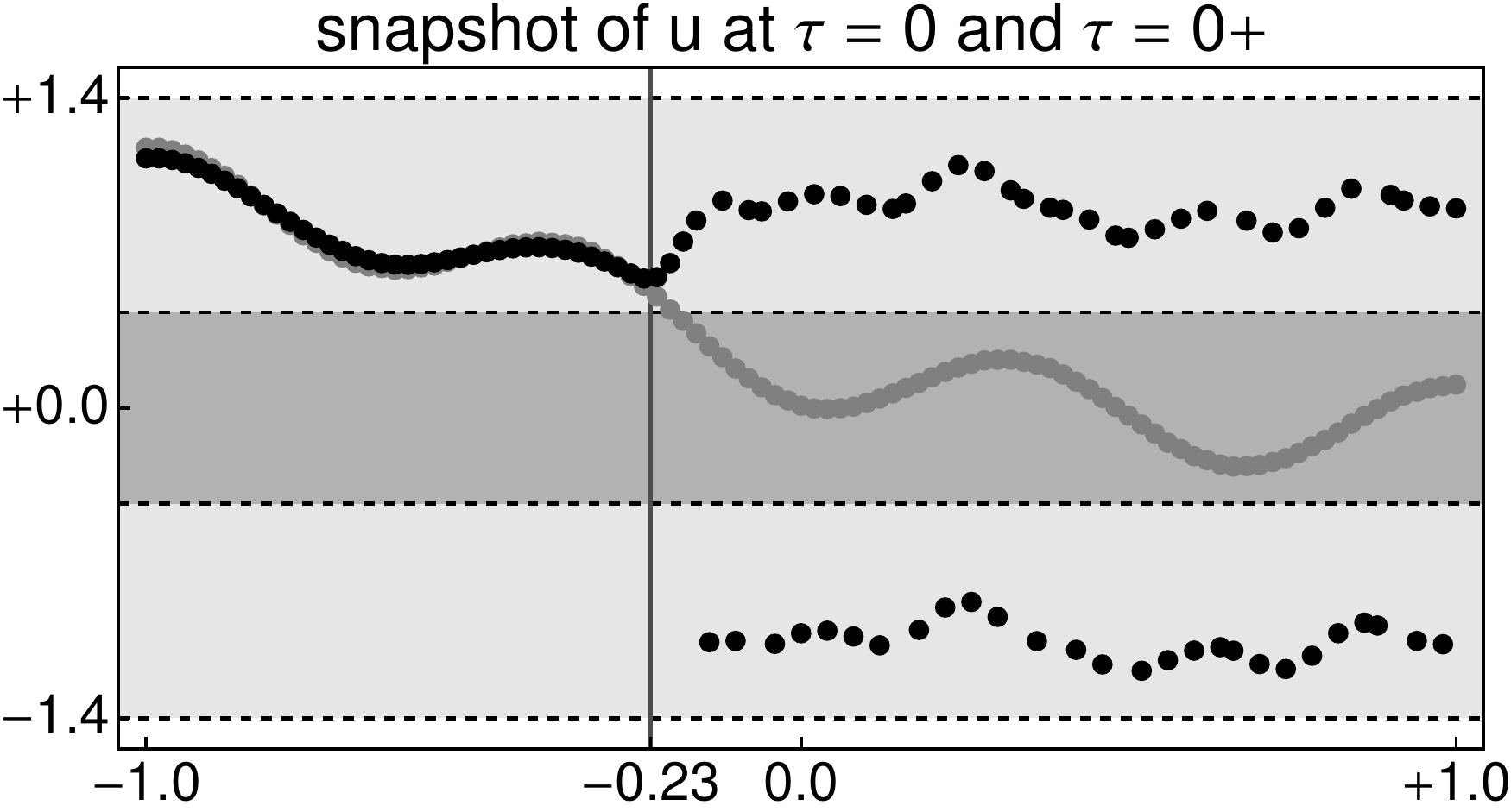}%
&\quad&%
\includegraphics[width=\figwidth]{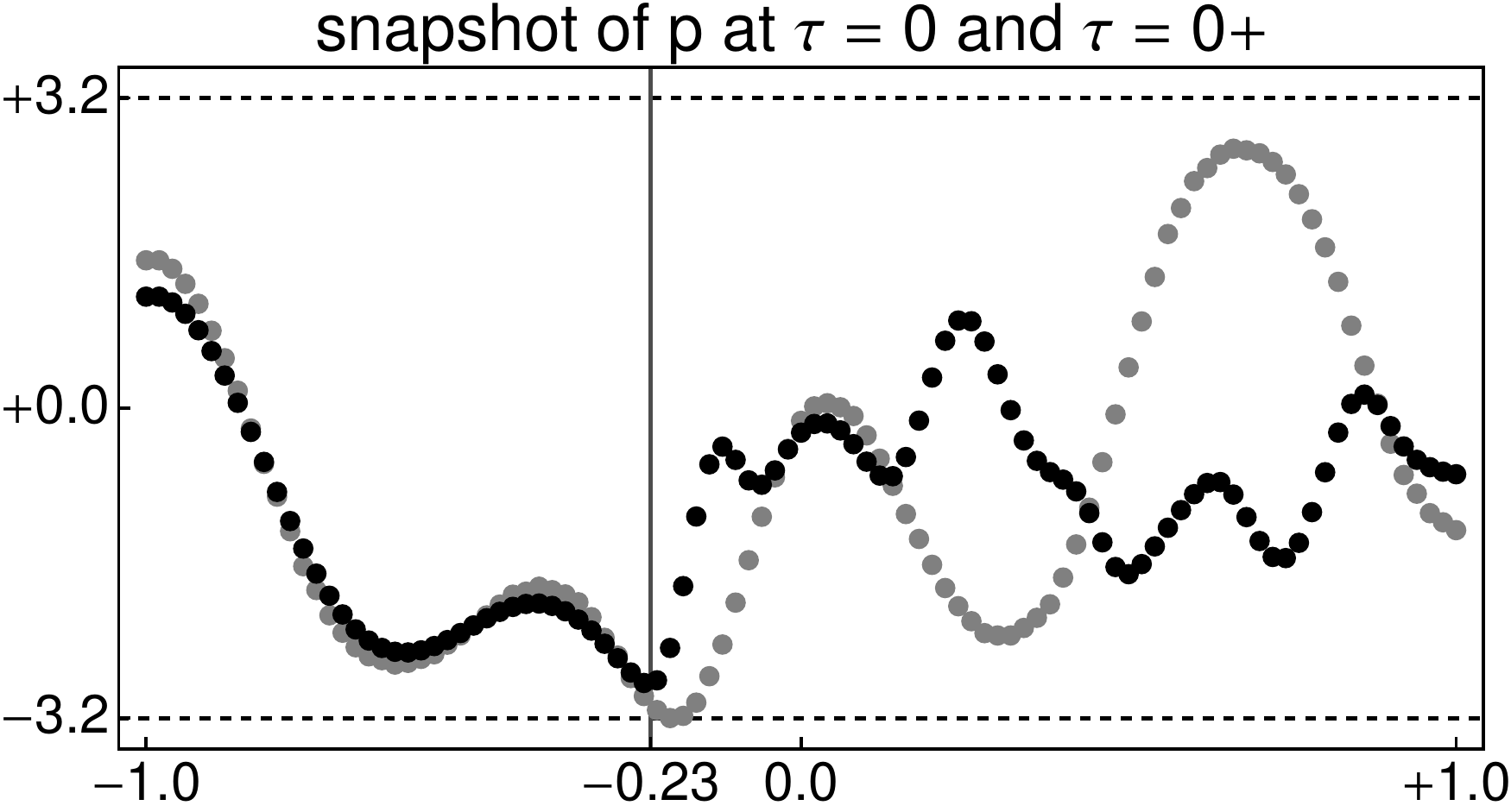}%
\end{tabular}%
}%
\caption{%
  Illustration of the initial transient regime for $N=50$, using
  snapshots of $u_j$ and $p_j$ (\emph{left} and \emph{right} column,
  respectively) against $\xi=\eps{j}$ at time $\tau=0$ (gray) and a
  short time $\tau=0+$ afterwards (black). The shadowed regions in the
  left column indicate the intervals $[u_*,u^*]$ and $[u_\#,u^\#]$;
  the dotted horizontal lines in the right column represent the
  critical values $\{p_*,p^*\}$. The vertical lines describe the
  macroscopic phase interfaces.  \emph{Top row.} In this example, the
  initial data $u_j\at{0}$ at $\tau=0$ do not penetrate the spinodal
  interval.  The lattice data $u_j$ and $p_j$ at $\tau=0+$ therefore
  resemble macroscopic functions $U$ and $P$, which are discontinuous
  and continuous, respectively, at the phase interface.  \emph{Bottom
    row.} Part of the initial data $u_j\at{0}$ are now taken from the
  spinodal interval.  At time $\tau=0+$, the lattice data $u_j$ can no
  longer be described by macroscopic functions but only by Young
  measures. The data $p_j$, however, still approximate a continuous
  macroscopic function $P$.}%
\label{Fig:transient.regime}%
\end{figure}%

\bigpar
We next discuss the small-time dynamics. For $0<\eps\ll1$ the initial
evolution of \eqref{eq:lattice} is related to a very fast transient
regime -- where `fast' refers to the macroscopic time $\tau$ -- during
which the system quickly approaches a state with macroscopic behavior
of $p_j$ and dissipation of order $\DO{1}$. This is illustrated in
Figure \ref{Fig:transient.regime}, which depicts two prototypical
examples of initial data at $\tau=0$ along with the state of the
system at a small macroscopic time $\tau=0+$ afterwards.
\par
In the top row of Figure \ref{Fig:transient.regime}, we start with
microscopic (i.\,e., oscillatory) initial data that are confined to
the two stable regions $u\in(-\infty,u_*)$ and
$u\in(u^*,+\infty)$. Due to the oscillations, the initial dissipation
is of order $\nDO{\eps^{-1} N}=\nDO{\eps^{-2}}$.  The initial energy
gradient is therefore also very large and drives the system
rapidly. At time $\tau=0+\approx\nDO{\eps^2}$, the dissipation and the
energy gradient eventually become of order $\nDO{1}$.  This implies
that the discrete data $p_j$ resemble a macroscopic function $P$ that
admits a weak spatial derivative and is hence continuous with respect
to $\xi$. We also observe that the discrete data $u_j$ at $\tau=0+$
approximate a piecewise continuous function $U$ which satisfies
$P=\Phi^\prime\at{U}$ and jumps across the interface located at
$\xi=0$. In particular, the \emph{phase fraction} $\mu$ defined by
\begin{align*}
\mu:= \chi_{[u^*,+\infty)}(U) - \chi_{(-\infty,u_*)}(U)
\end{align*}
takes the values $-1$ and $+1$ outside of the interface, where
$\chi_J$ denotes the indicator function of the interval $J$.

\par
The second example, see the bottom row in Figure
\ref{Fig:transient.regime}, is different since now some of the initial
data $u_j\at{0}$ belong to the spinodal interval
$[u_*,u^*]$, in which the discrete diffusion coefficient
is negative. In the numerical simulation we therefore observe that
each of those $u_j$ quickly leaves the spinodal interval
(\emph{spinodal decomposition}) and that the data for adjacent
$j$ can be attracted by different stable regions. In particular,
the data at time $\tau=0+\approx\nDO{\eps^2}$ exhibit a phase
interface near $\xi\approx-0.23$, in the sense that 
$u_j$ is non-oscillatory on the left but highly oscillatory on the
right of the interface. The fine structure of these oscillations
depends on the microscopic details and each reasonable macroscopic
theory must describe them in terms of a Young measure
$\nu=\nu\triple{\tau}{\xi}{\mathrm{d}u}$, which provides a probability
distribution with respect to $u$ for any macroscopic point
$\pair{\tau}{\xi}$. The discrete data $p_j$, however, still resemble a
macroscopic function $P$ because otherwise the dissipation could not
be of order $\DO{1}$. Since $p_j$ and $u_j$ are coupled by
$\Phi^\prime$, we then conclude that the $u$-support of the Young
measure $\nu$ consists of only two points. This reads
\begin{align}
  \label{eq:young.measure}
  \nu\triple{\tau}{\xi}{\mathrm{d}u}
  =
  \frac{1-\mu\pair{\tau}{\nu}}{2}
  \delta_{\beta_-\at{P\pair{t}{\xi}}}\at{\mathrm{d}u}
  +
  \frac{1+\mu\pair{\tau}{\nu}}{2}
  \delta_{\beta_+\at{P\pair{t}{\xi}}}\at{\mathrm{d}u}.
\end{align}
Here, $\delta_\beta\at{\mathrm{d}u}$ is the Dirac distribution at
$\beta$, the functions $\beta_-$, $\beta_+$ denote the two stable
branches of the inverse of $\Phi^\prime$, and the phase fraction $\mu$
takes values in $[-1,+1]$.
\par
The simulations from Figure \ref{Fig:transient.regime} reveal that
there exist (at least) two different types of macroscopic phase
interfaces: Type-I interfaces separate regions where the microscopic
data $u_j$ are confined to either one of the phases $(-\infty,u_*)$
and $(u^*,+\infty)$, whereas type-II interfaces describe that the
lattice data oscillate between the two phases on at least one side of
the interface.  Below we argue that type-I interfaces can be described
by a free boundary value problems, which exhibits hysteresis and
involves only the macroscopic fields $P$ and
$\mu\in\{-1,+1\}$. Type-II interfaces, however, are more complicated
and their investigation is postponed to future research.

\subsection{Examples of macroscopic interface dynamics }
\label{sec:Macroscopic.Interfaces}
%
In this section we study the dynamics of type-I interfaces in
numerical simulations. In particular, we investigate the macroscopic
jump conditions across such interfaces and provide numerical evidence
for pinning, depinning, and annihilation. At the end we also present
an example of a type-II interface.
\par
For simplicity, and in view of the discussion in the previous section,
we always impose initial data $u_j\at{0}$ such that
$p_j\at{0}=\Phi^\prime\at{u_j\at{0}}$ resemble a macroscopic function
$\xi\mapsto P\pair{0}{\xi}$. In all simulations we observe -- for,
loosely speaking, most of the macroscopic times $\tau>0$ -- that the
discrete data $p_j\at{\tau/\eps^2}$ approximate a macroscopic function
$\xi\mapsto P\pair{\tau}{\xi}$. We therefore expect that the
macroscopic limit $\eps\to0$ can in fact be characterized by a PDE for
$P$ and the phase field $\mu$, or equivalently, in terms of a free
boundary problem for $P=\Phi^\prime\at{U}$ and the interface
curves. There exist, however, small macroscopic times intervals in
which the discrete data $p_j\at{t}$ exhibit strong temporal and
spatial fluctuations near a moving phase interface.  These
fluctuations are discussed in the next section.
\renewcommand{\figwidth}{0.4\textwidth}%
\begin{figure}[t!]%
\centering{%
\begin{tabular}{ccc}%
\includegraphics[width=\figwidth]{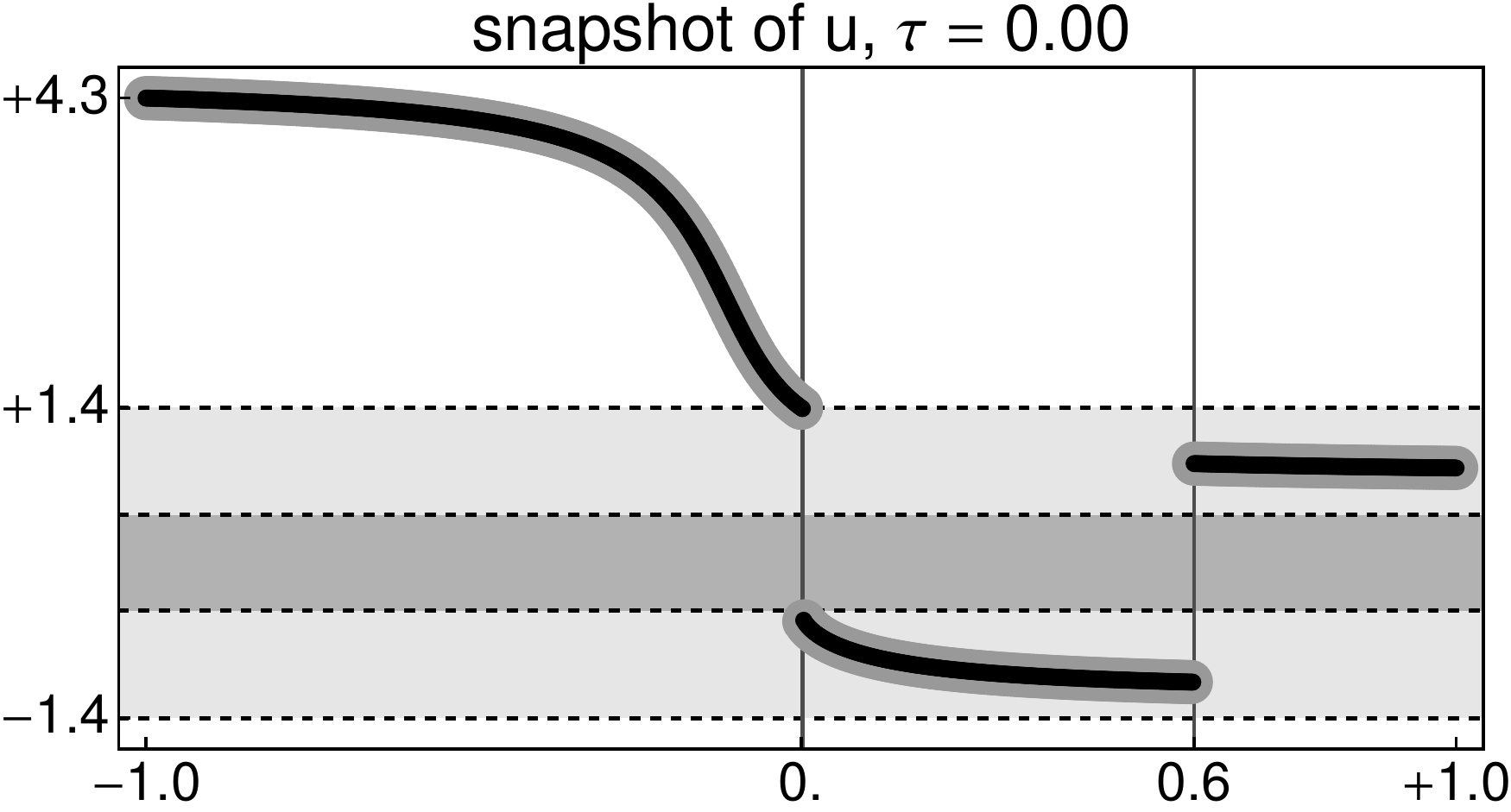}%
&\quad&%
\includegraphics[width=\figwidth]{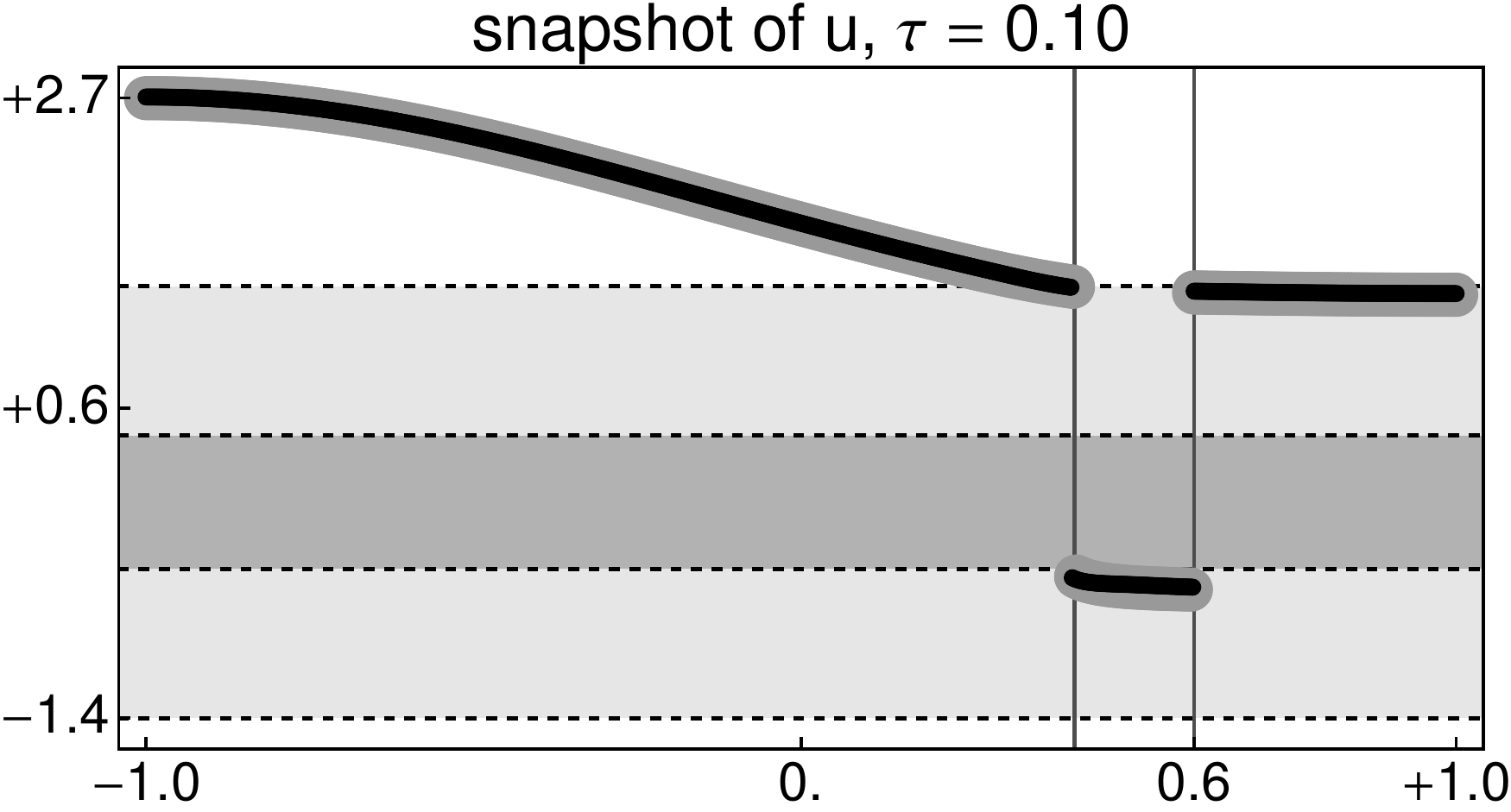}%
\\%
\includegraphics[width=\figwidth]{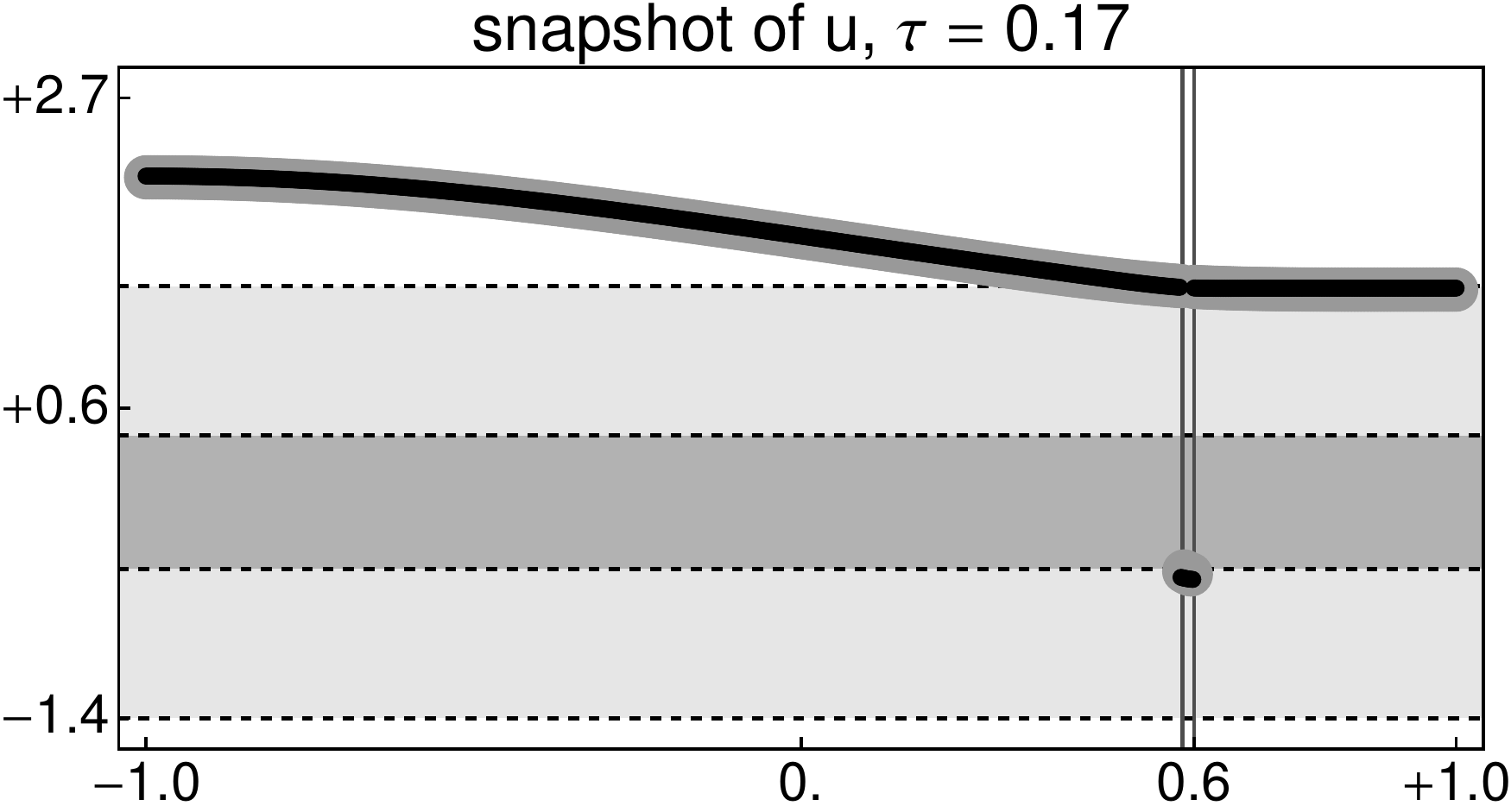}%
&\quad&%
\includegraphics[width=\figwidth]{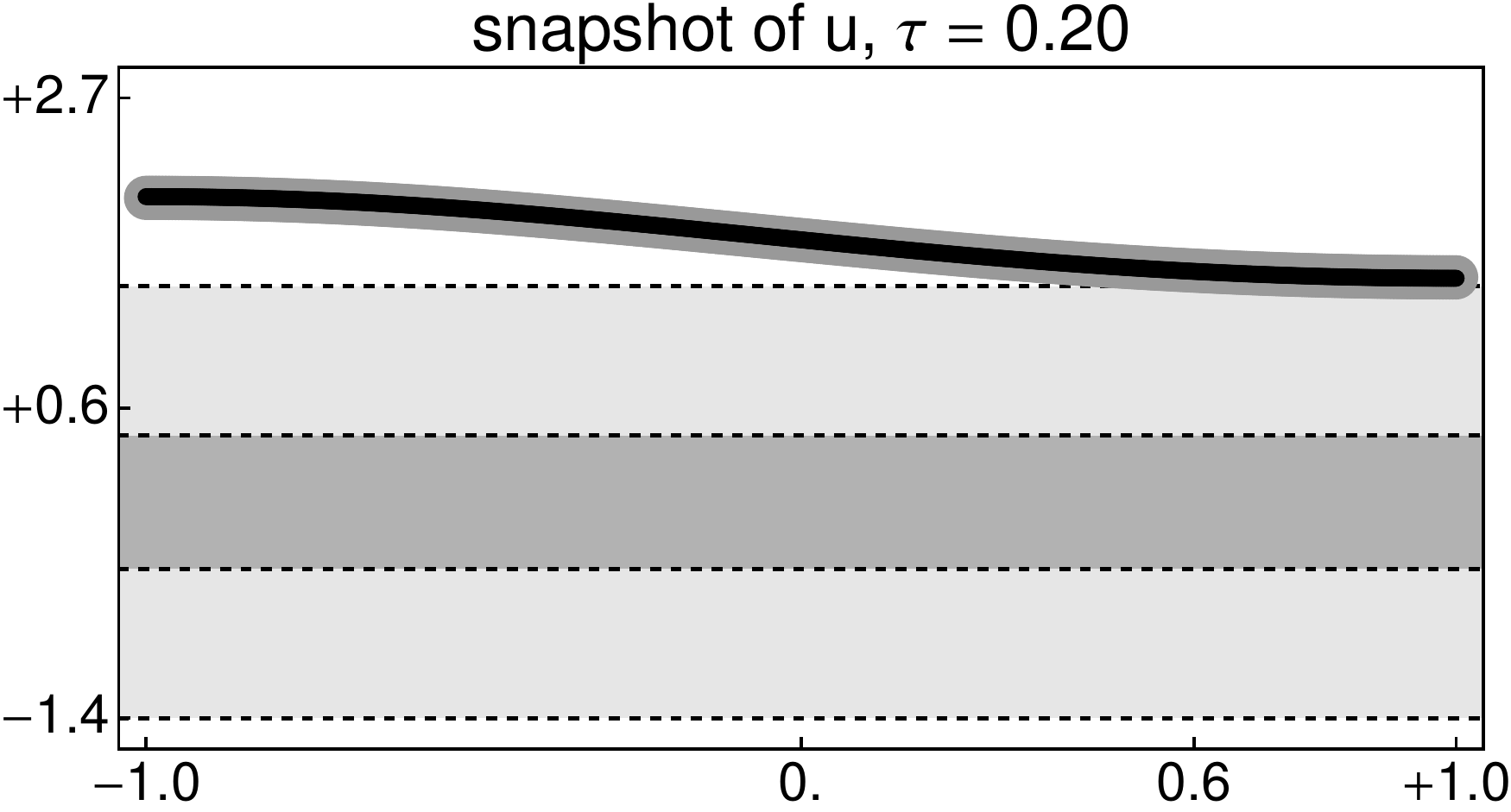}%
\end{tabular}%
}%
\caption{%
  Annihilation of a moving and a standing type-I interface for two
  different simulations with $N=200$ (thick gray curves) and $N=500$
  (thin black curves). The snapshots show $u_j$ against the
  macroscopic position $\xi=\eps j$ at fixed macroscopic times $\tau$;
  the vertical lines indicate the interface positions and the two
  shaded regions represent the intervals $[u_*,u^*]$ and
  $[u_\#,u^\#]$.}%
\label{Fig:front_1a}%
\bigskip
\centering{%
\begin{tabular}{ccc}%
\includegraphics[width=\figwidth]{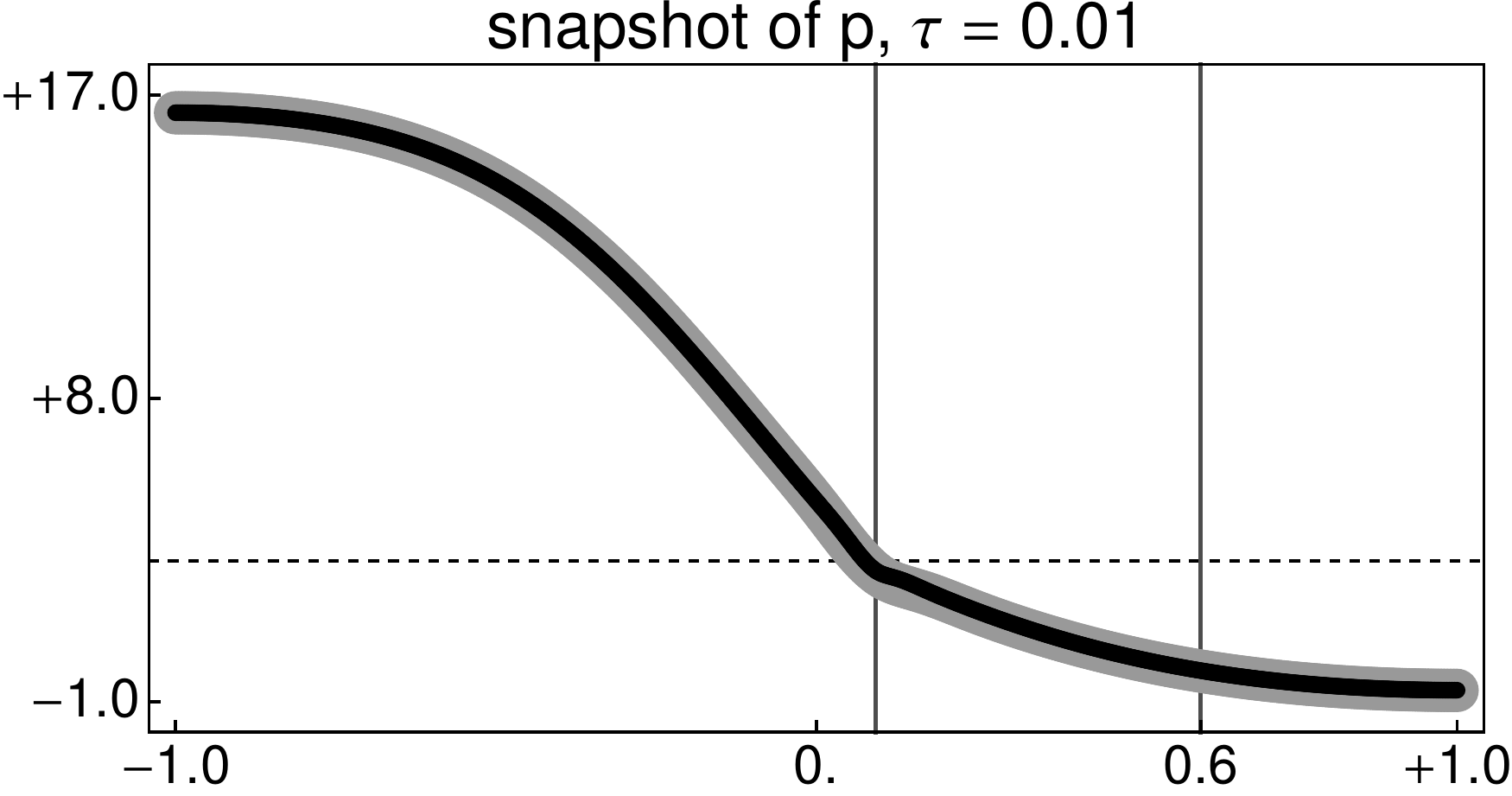}%
&\quad&%
\includegraphics[width=\figwidth]{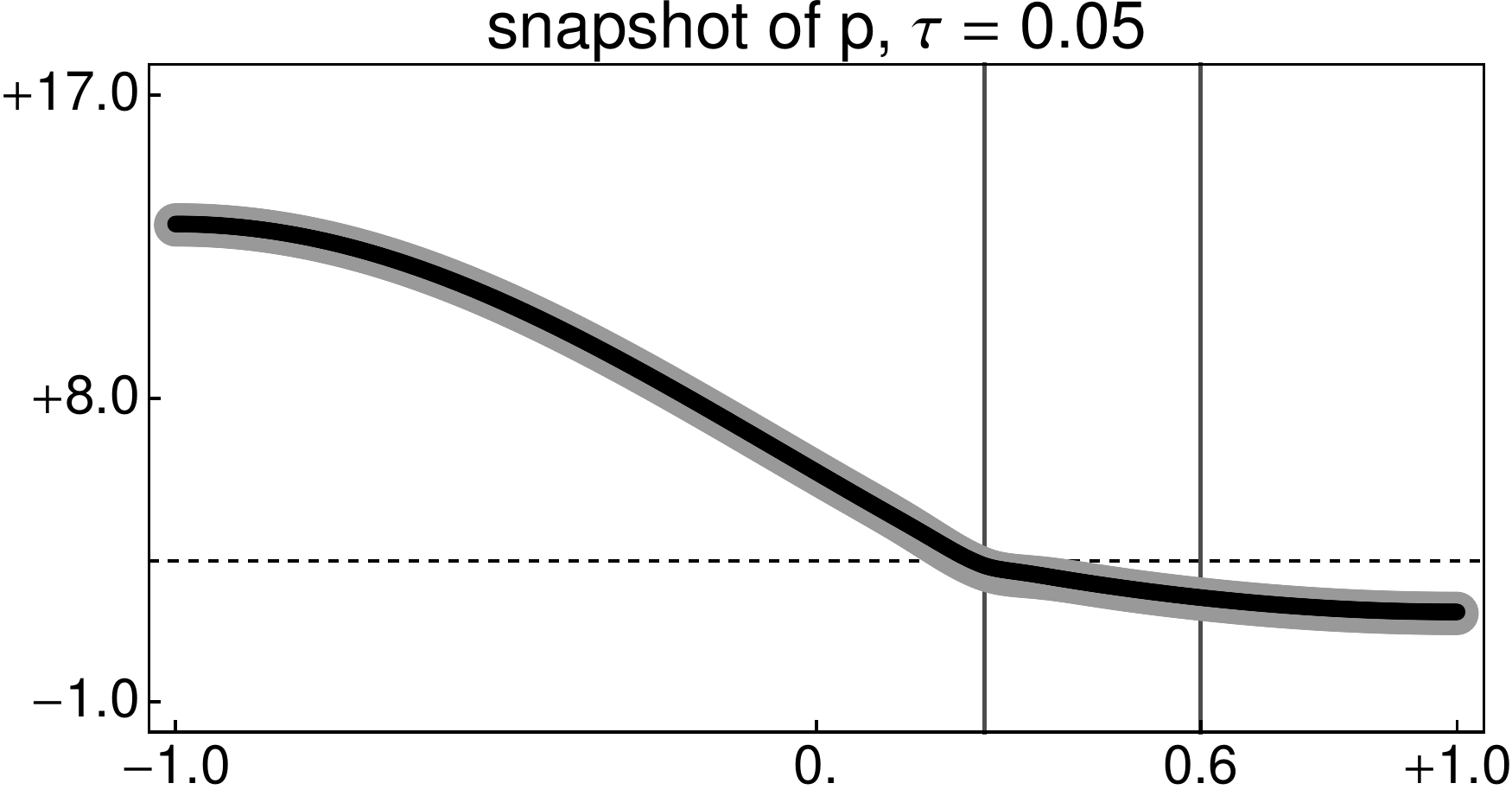}%
\end{tabular}%
}%
\caption{%
Snapshots of $p_j$ for the example from Figure \ref{Fig:front_1a}. The horizontal line represents $p=p^*$.
}%
\label{Fig:front_1b}%
\bigskip
\centering{%
\begin{tabular}{ccc}%
\includegraphics[width=\figwidth]{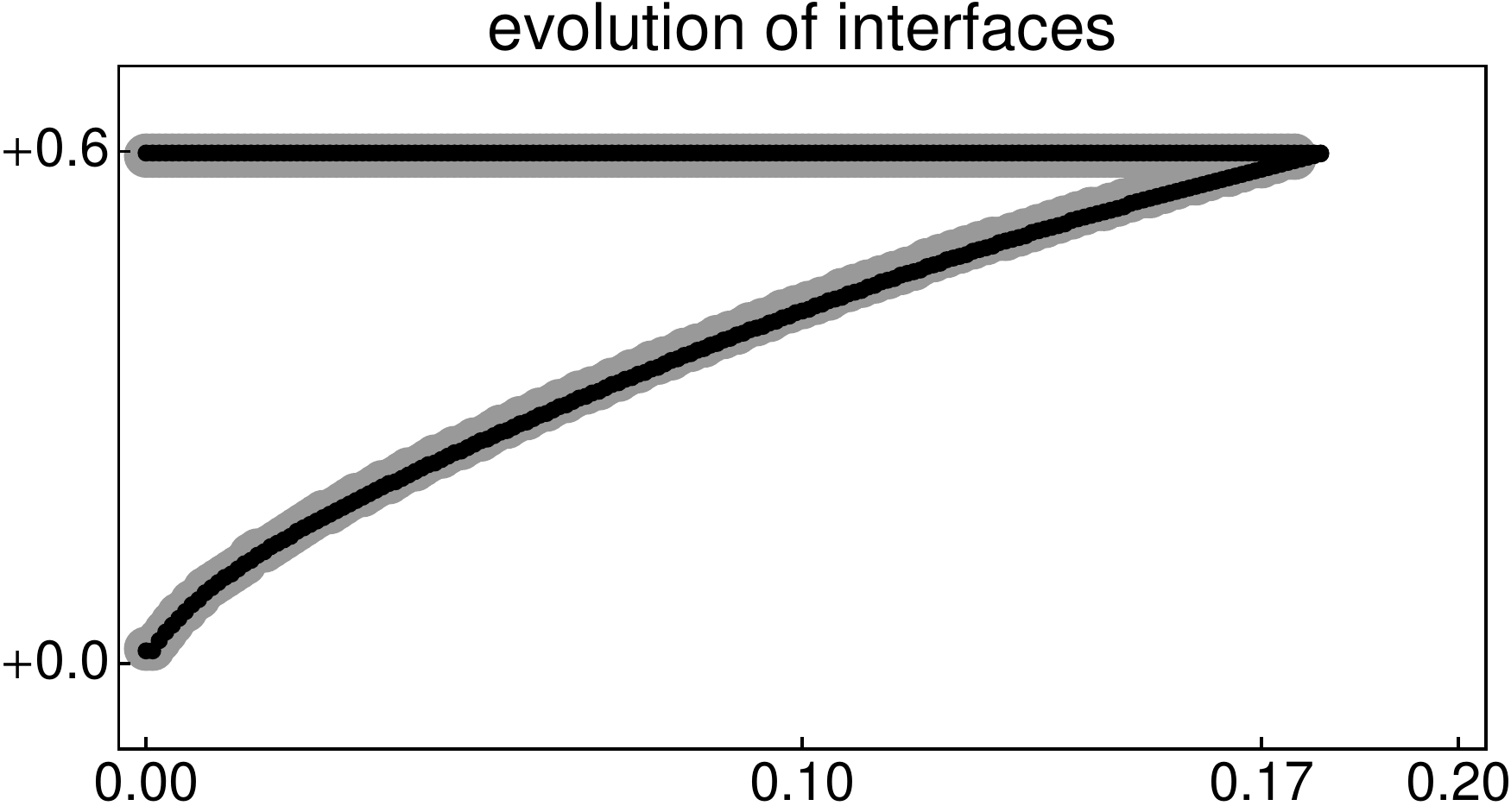}%
&\quad&%
\includegraphics[width=\figwidth]{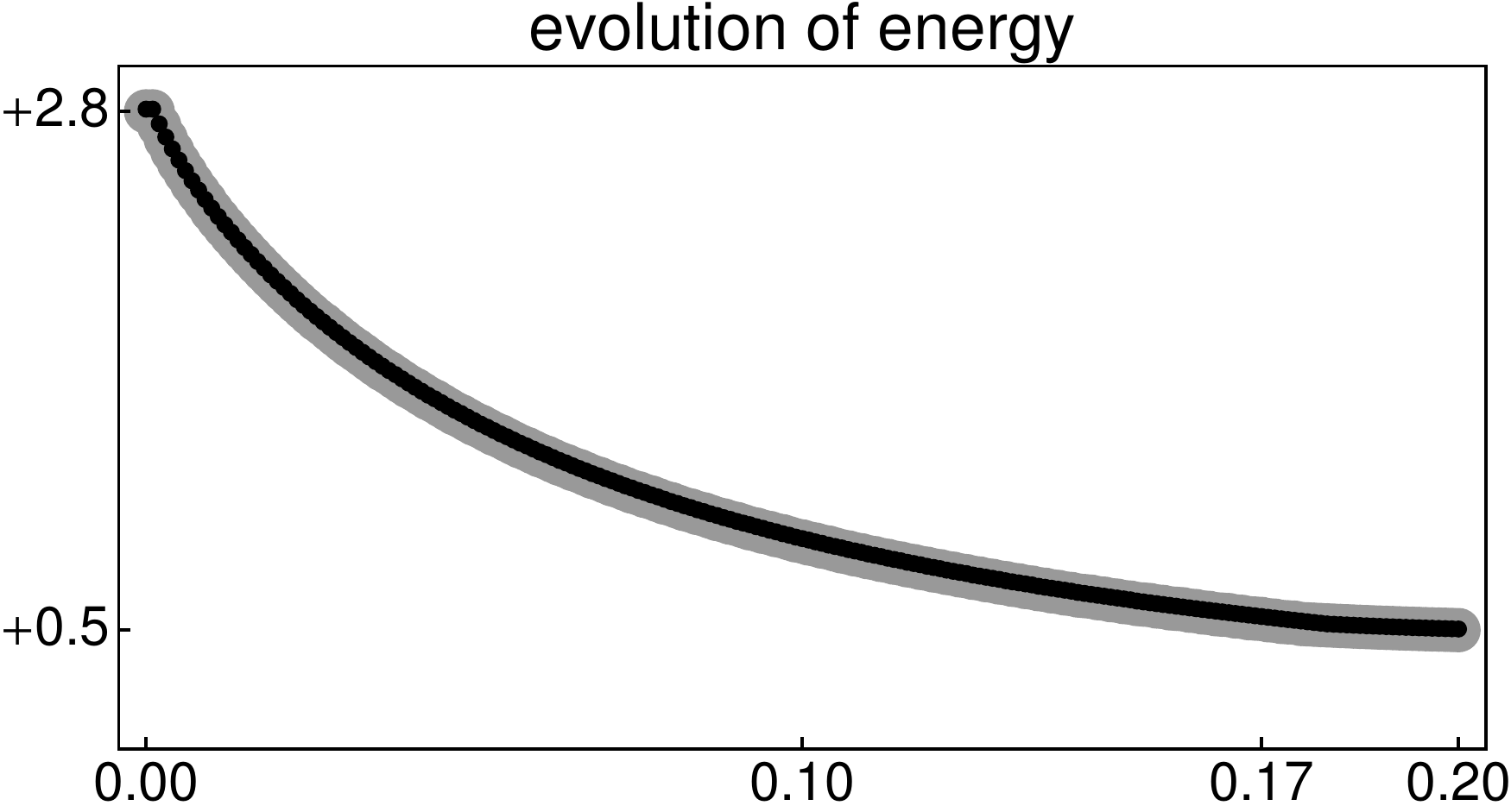}%
\end{tabular}%
}%
\caption{%
Macroscopic positions
of the interfaces and the energy as function of $\tau$
for the example from Figure \ref{Fig:front_1a}.
}%
\label{Fig:front_1c}%
\end{figure}%
\bigpar
Figure \ref{Fig:front_1a} depicts lattice simulations with two
different values of $N$, showing that the macroscopic plots of the
discrete data are basically independent of $N$. \EMHC In this example,
we initialize two macroscopic phase interfaces which are located at
$\xi=0$ and $\xi=0.6$ and separate regions with $U>u^\#$,
$U\in[u_\#,u_*]$, and $U\in[u^*,u^\#]$. The first interface moves to
the right while the second one clearly keeps its initial position. At
the later time $\tau\approx 0.18$ both interfaces annihilate each
other in a collision process, see also Figure \ref{Fig:front_1c}, and
the macroscopic evolution afterwards is governed by nonlinear
diffusion inside the phase $U\in[u^*,\infty)$.  Figure
\ref{Fig:front_1b} provides numerical evidence for the jump rules
across the interface: The moving interface is driven by a jump in
$\partial_\xi P$ whereas $P$ is smooth across the standing
interface. Moreover, while $P=p^*$ holds on the moving interface, $P$
evolves on the standing interface and takes values in $[p_*,p^*]$.
\renewcommand{\figwidth}{0.4\textwidth}%
\begin{figure}[t!]%
\centering{%
\begin{tabular}{ccc}%
\includegraphics[width=\figwidth]{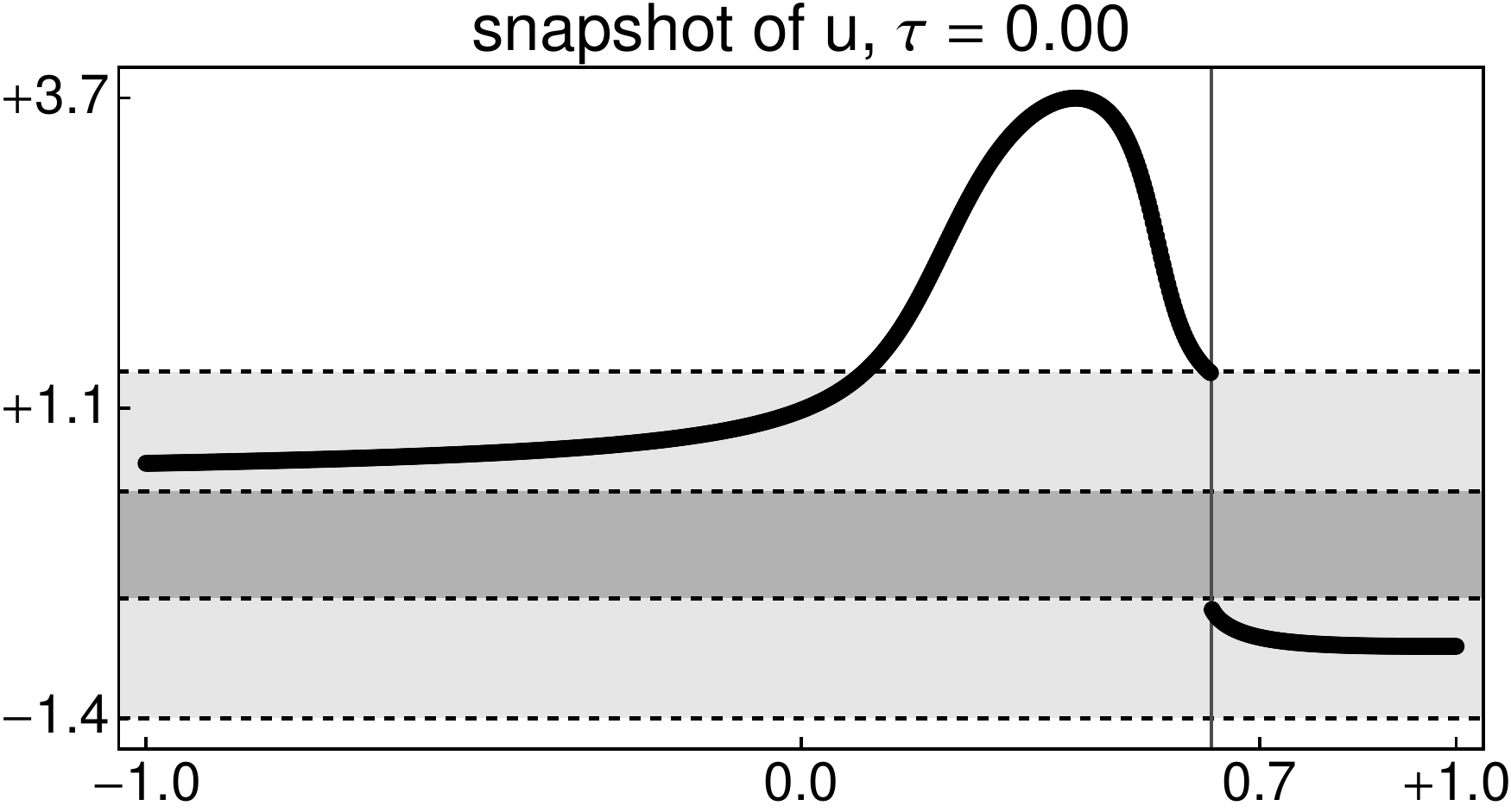}%
&\quad&%
\includegraphics[width=\figwidth]{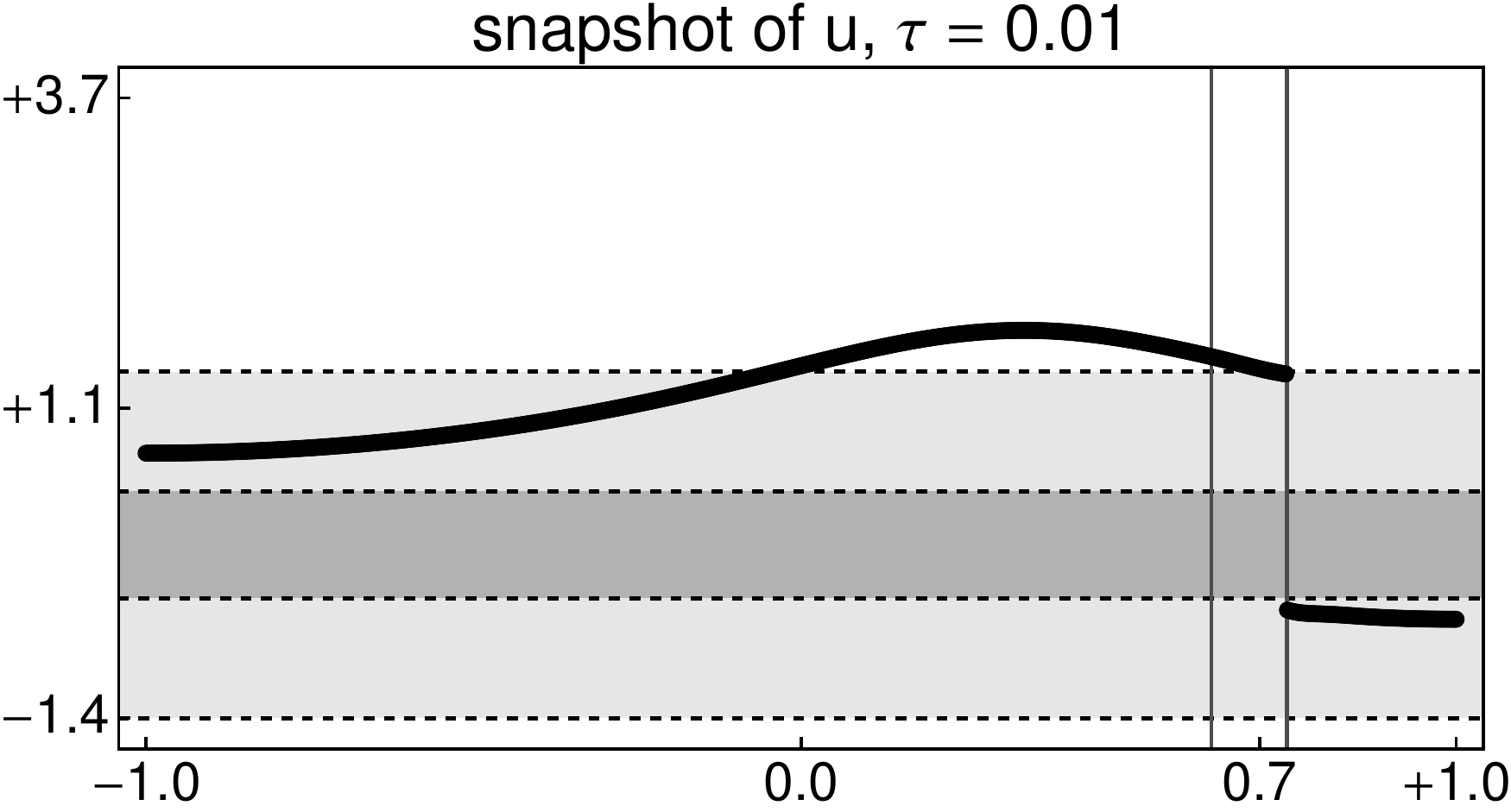}%
\\%
\includegraphics[width=\figwidth]{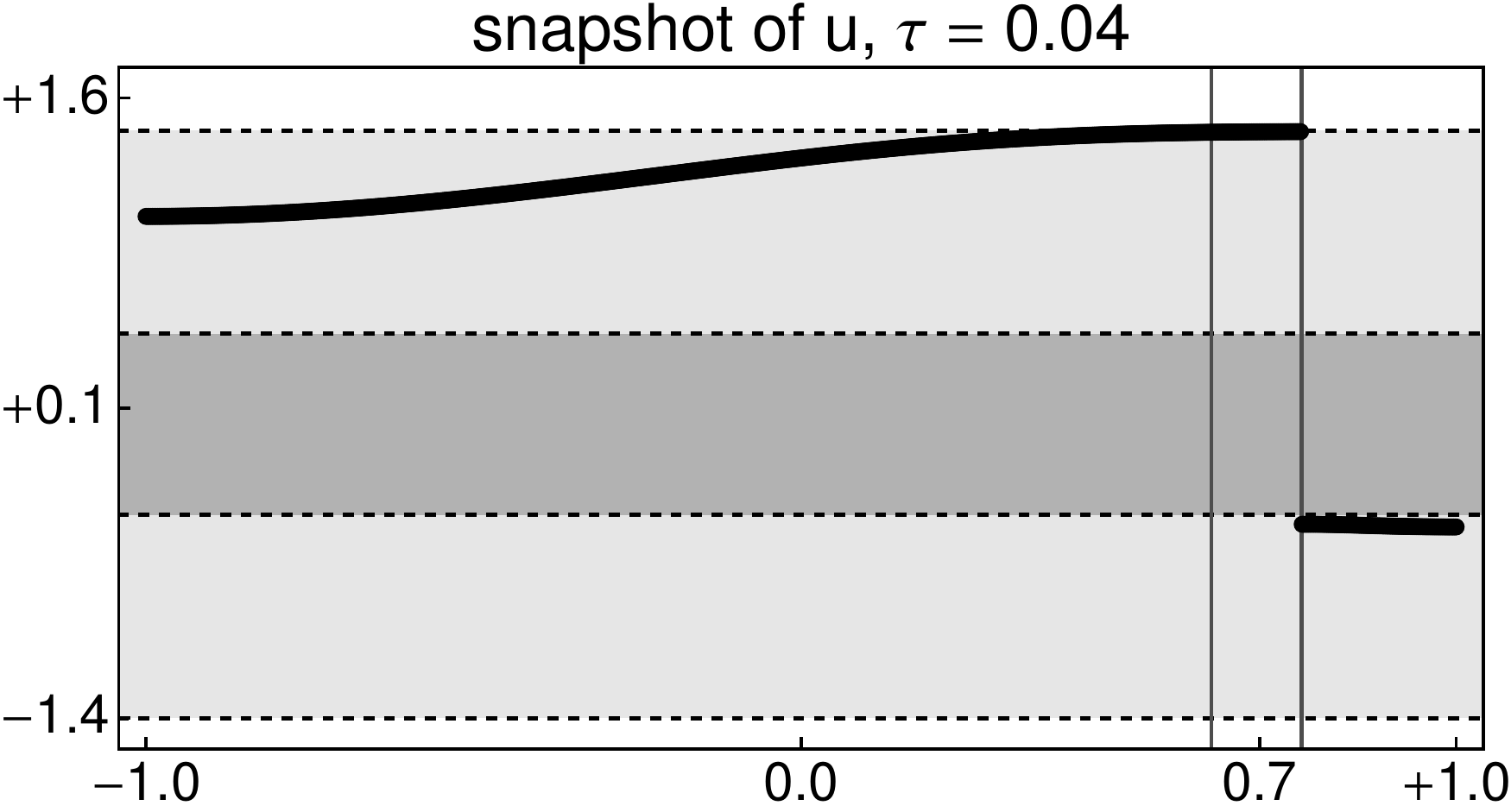}%
&\quad&%
\includegraphics[width=\figwidth]{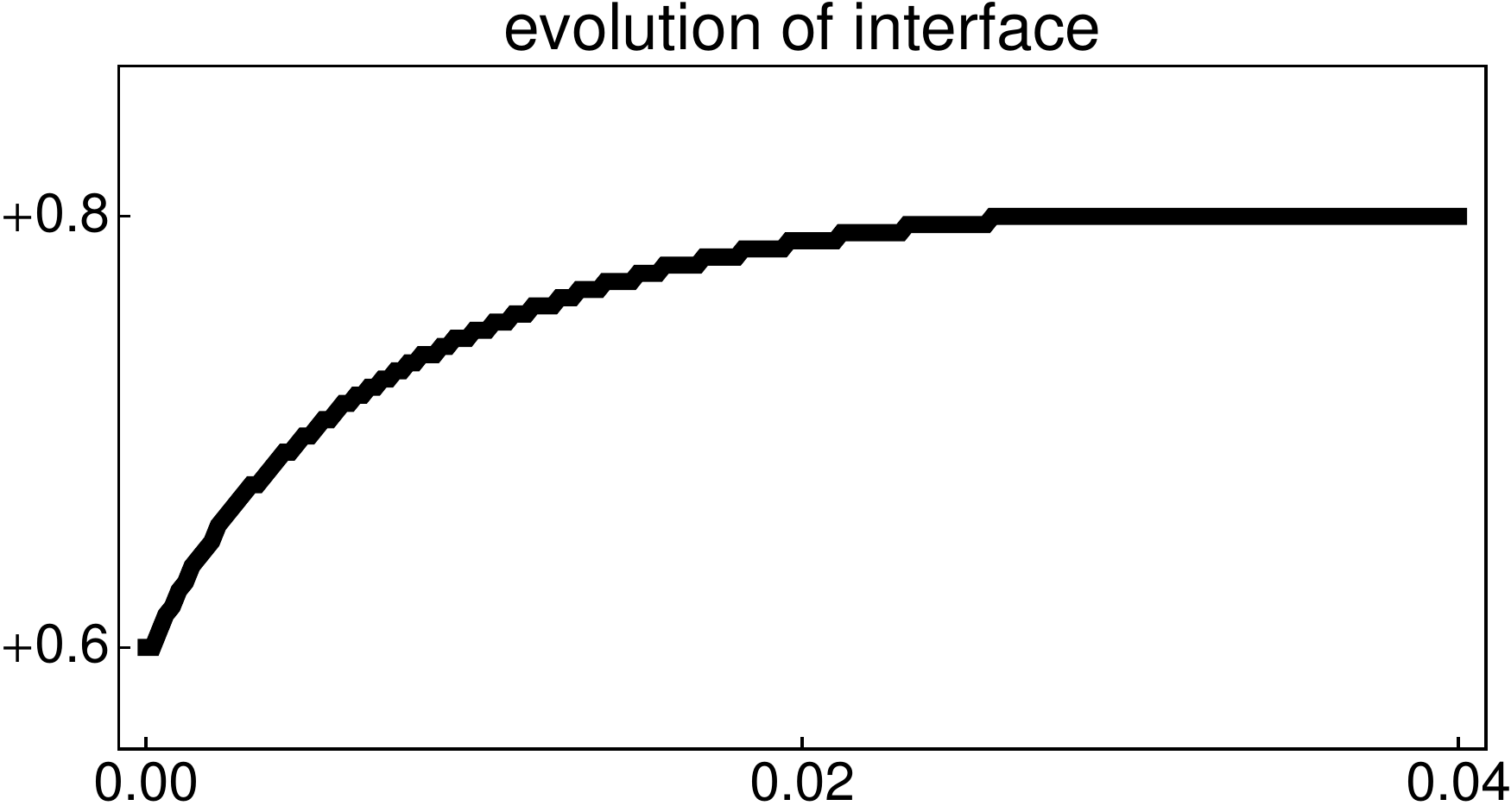}%
\end{tabular}%
}%
\caption{%
Pinning of a type-I interface with $N=400$. The vertical lines indicate the initial and the current position of the interface.
}%
\label{Fig:front_2}%
\end{figure}%
\renewcommand{\figwidth}{0.4\textwidth}%
\begin{figure}[t!]%
\centering{%
\begin{tabular}{ccc}%
\includegraphics[width=\figwidth]{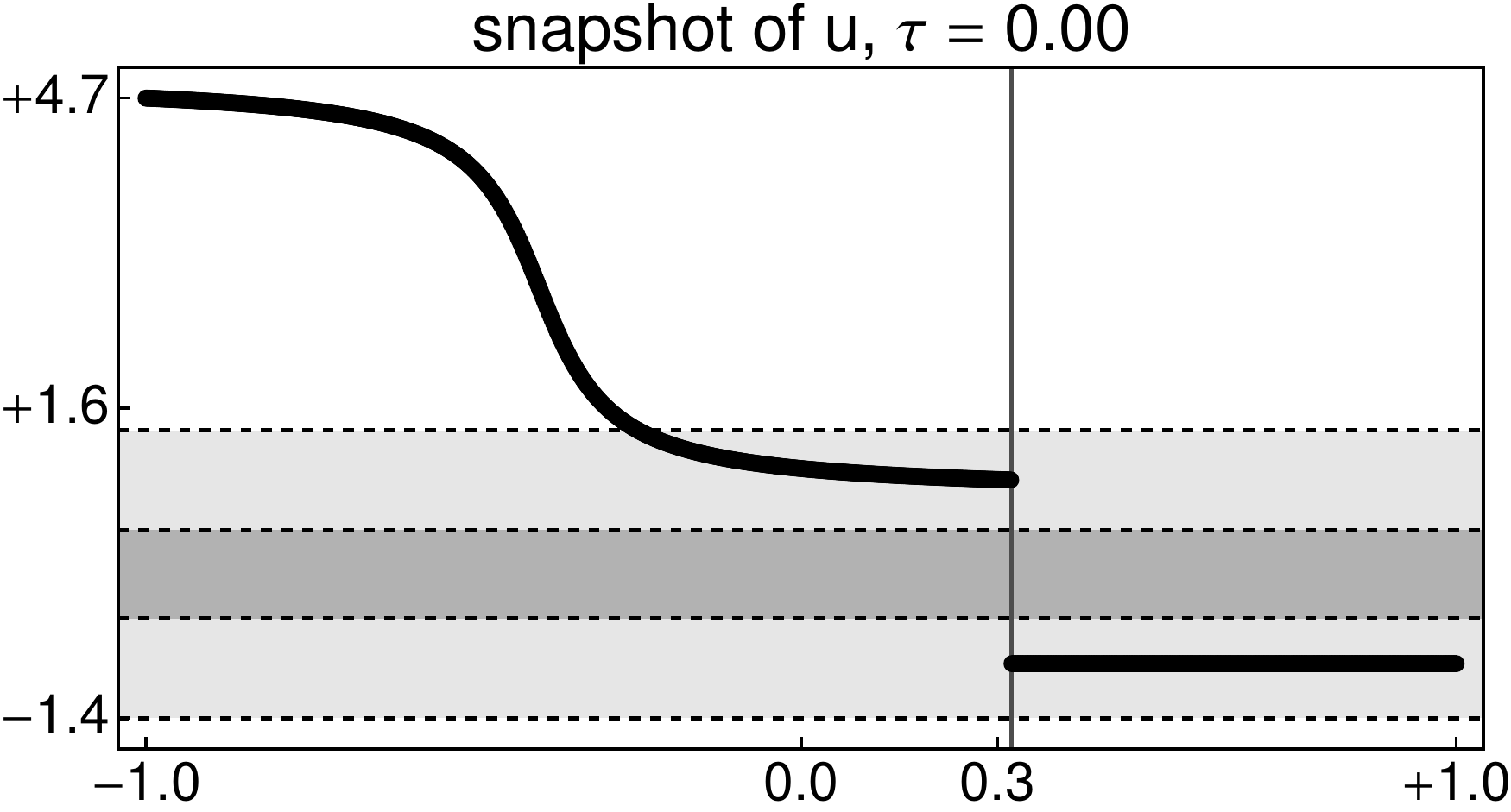}%
&\quad&%
\includegraphics[width=\figwidth]{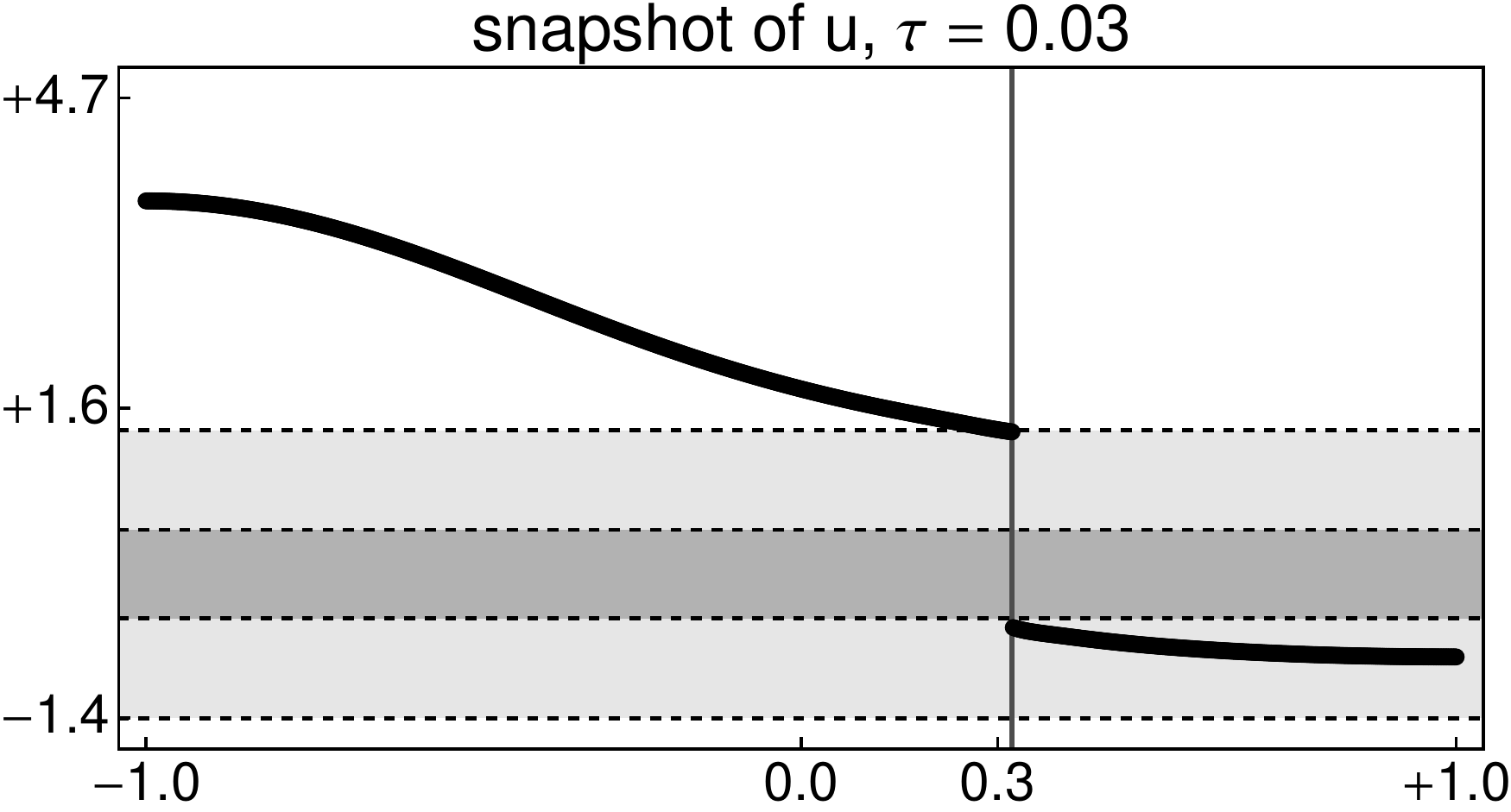}%
\\%
\includegraphics[width=\figwidth]{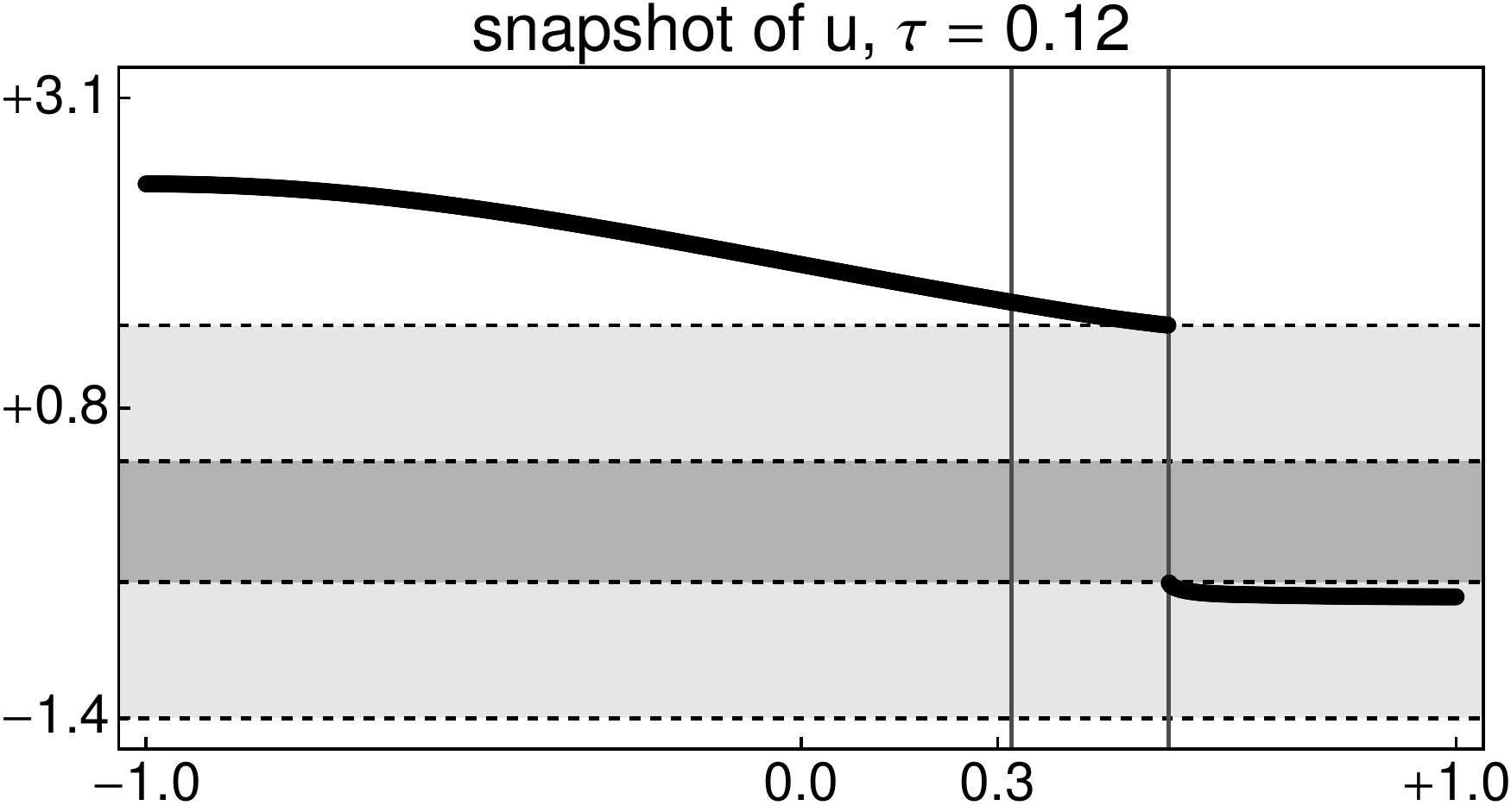}%
&\quad&%
\includegraphics[width=\figwidth]{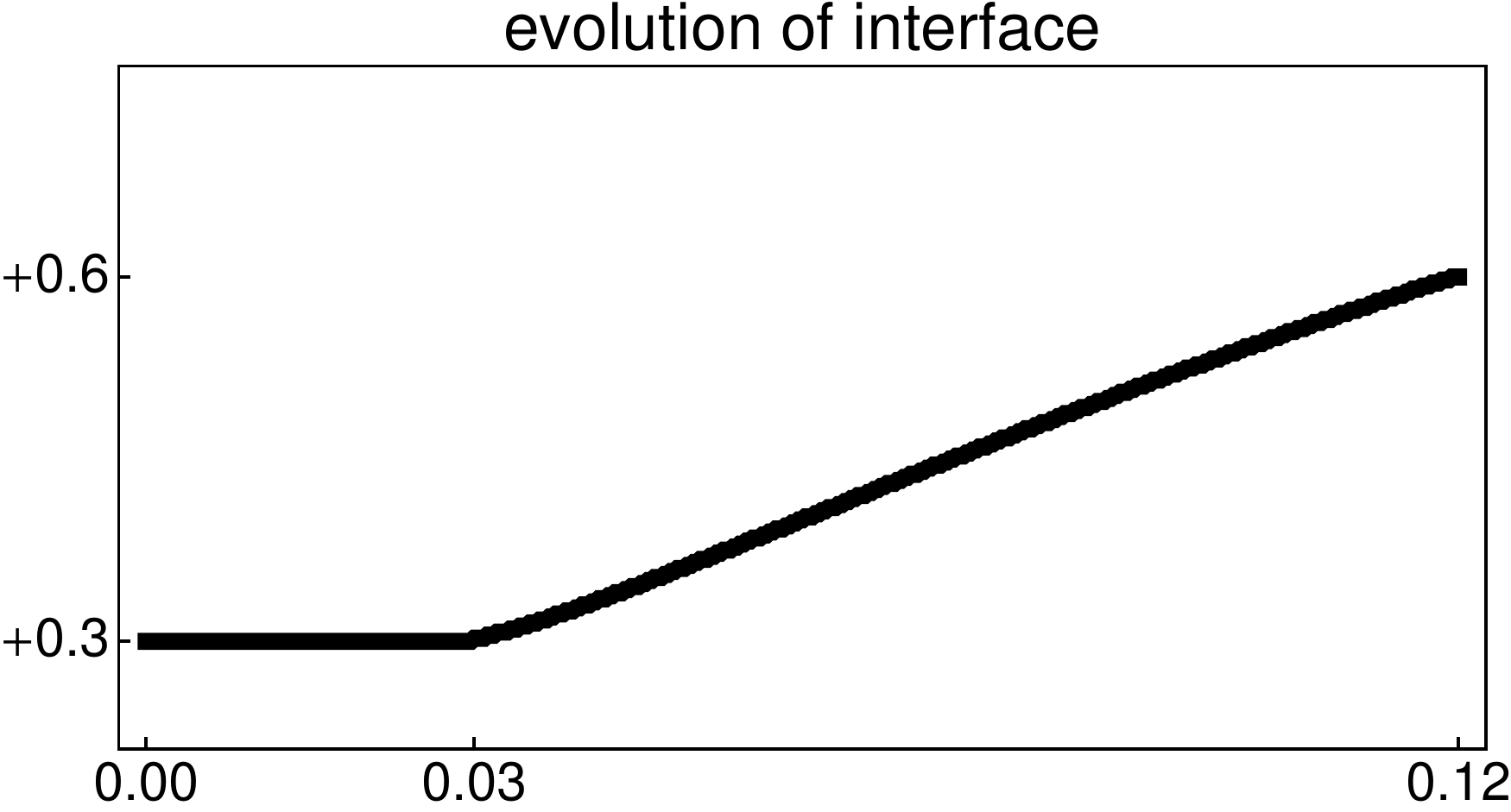}%
\end{tabular}%
}%
\caption{%
Depinning of a type-I interface with $N=500$.
}%
\label{Fig:front_3}%
\end{figure}%
\par
A further dynamical feature of the lattice \eqref{eq:lattice}, namely
the pinning of interfaces, is illustrated in Figure
\ref{Fig:front_2}. At time $\tau=0$, we initialize a single
macroscopic interface that separates regions with $U\geq u^*$ and
$U\in[u_\#,u_*]$, where the data are chosen such that
$P>p^*$ and $P<p^*$ holds locally on the left and on the right of the
interface, respectively.  This interface starts propagating to the
right but stops moving at $\tau\approx0.02$ because the bulk diffusion
behind the interfaces enforces $P\leq p^*$ for $\tau\gtrsim0.02$. The
inverse process, that is the depinning of interfaces, is shown in
Figure \ref{Fig:front_3}. There, a single macroscopic interface is
initially at rest with $P\in(p_*,p^*)$ but propagates with
$P=p^*$ for $\tau\gtrsim0.03$.
\renewcommand{\figwidth}{0.4\textwidth}%
\begin{figure}[t!]%
\centering{%
\begin{tabular}{ccc}%
\includegraphics[width=\figwidth]{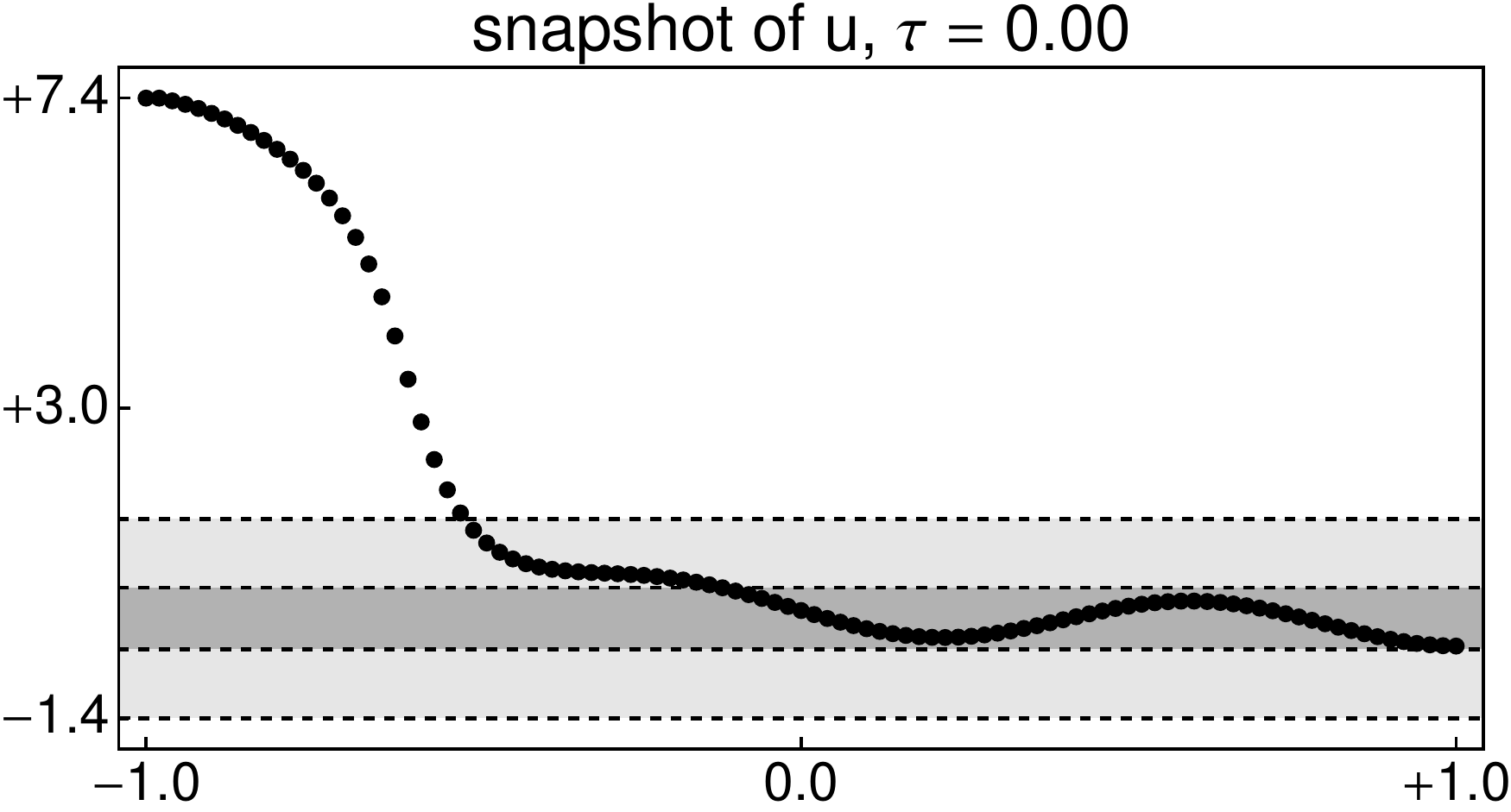}%
&\quad&%
\includegraphics[width=\figwidth]{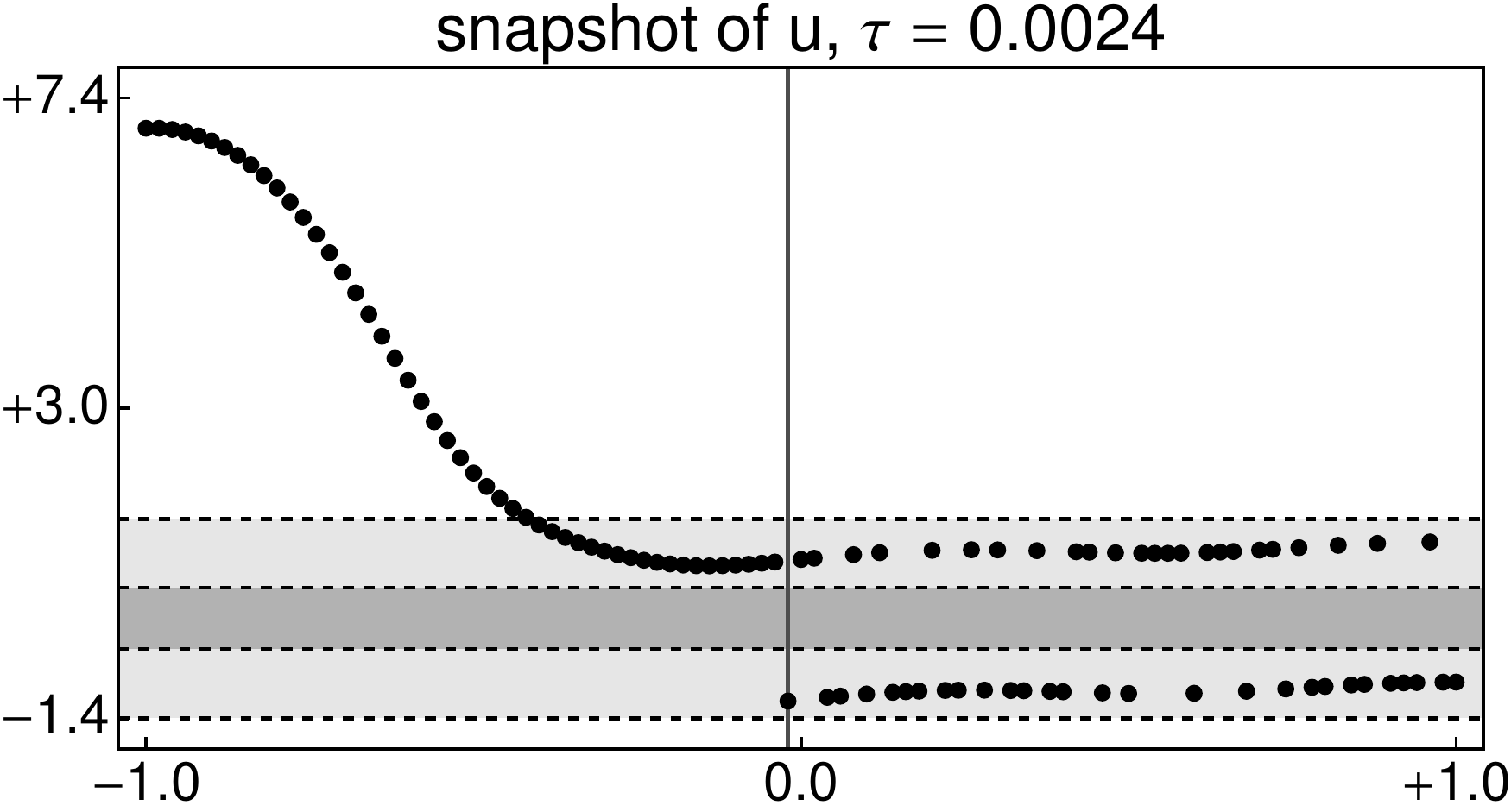}%
\\%
\includegraphics[width=\figwidth]{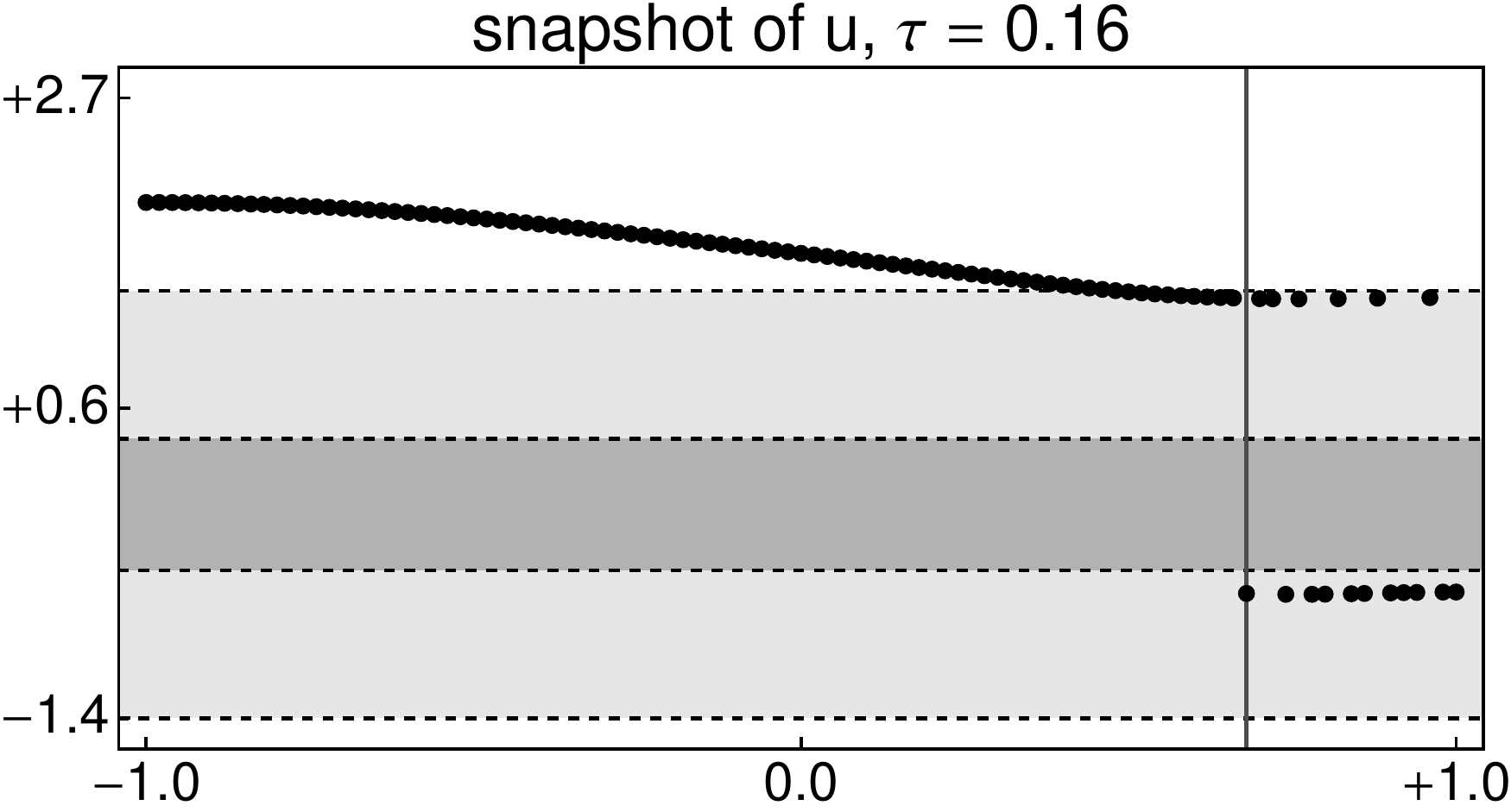}%
&\quad&%
\includegraphics[width=\figwidth]{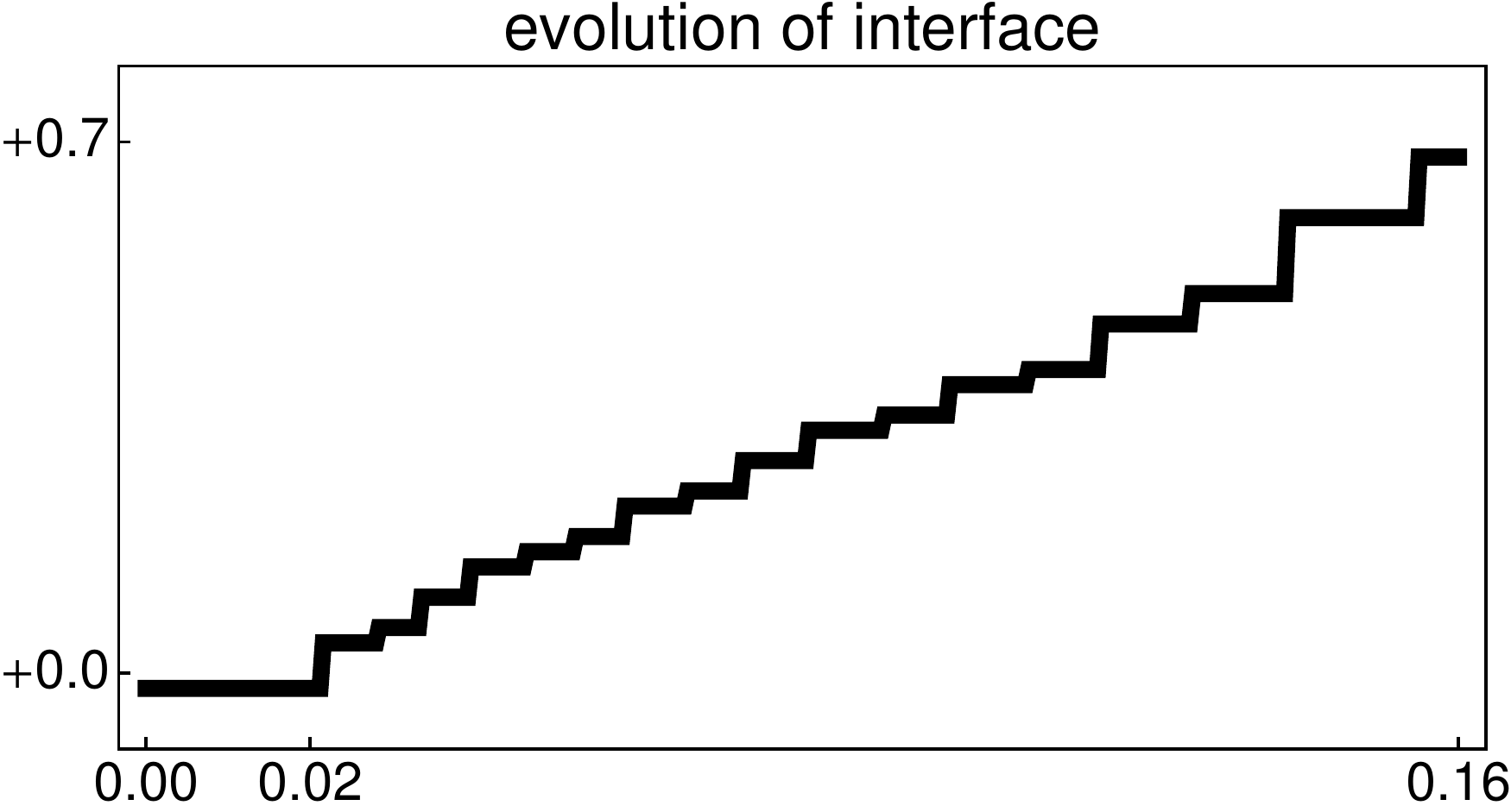}%
\\%
\includegraphics[width=\figwidth]{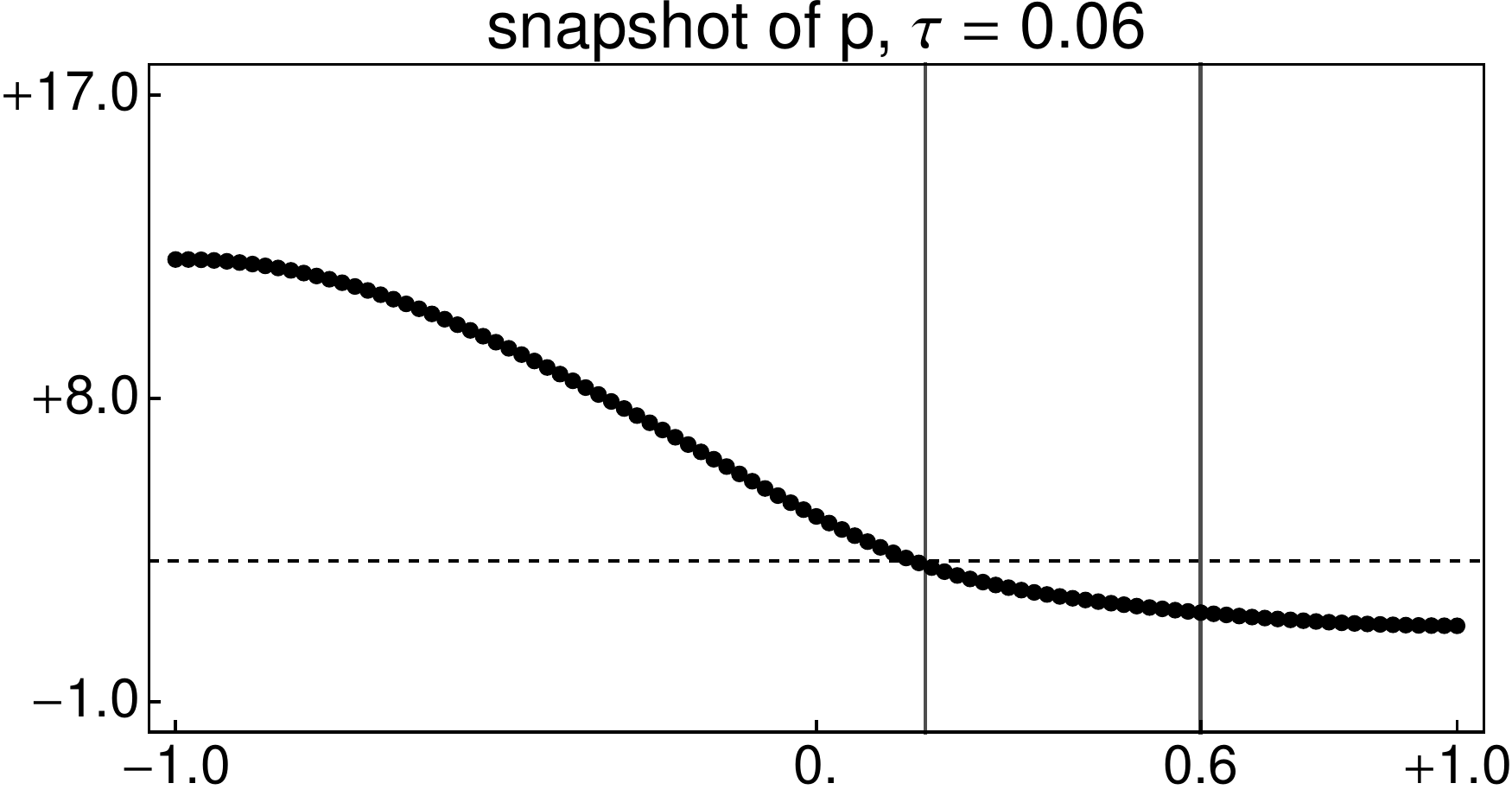}%
&\quad&%
\includegraphics[width=\figwidth]{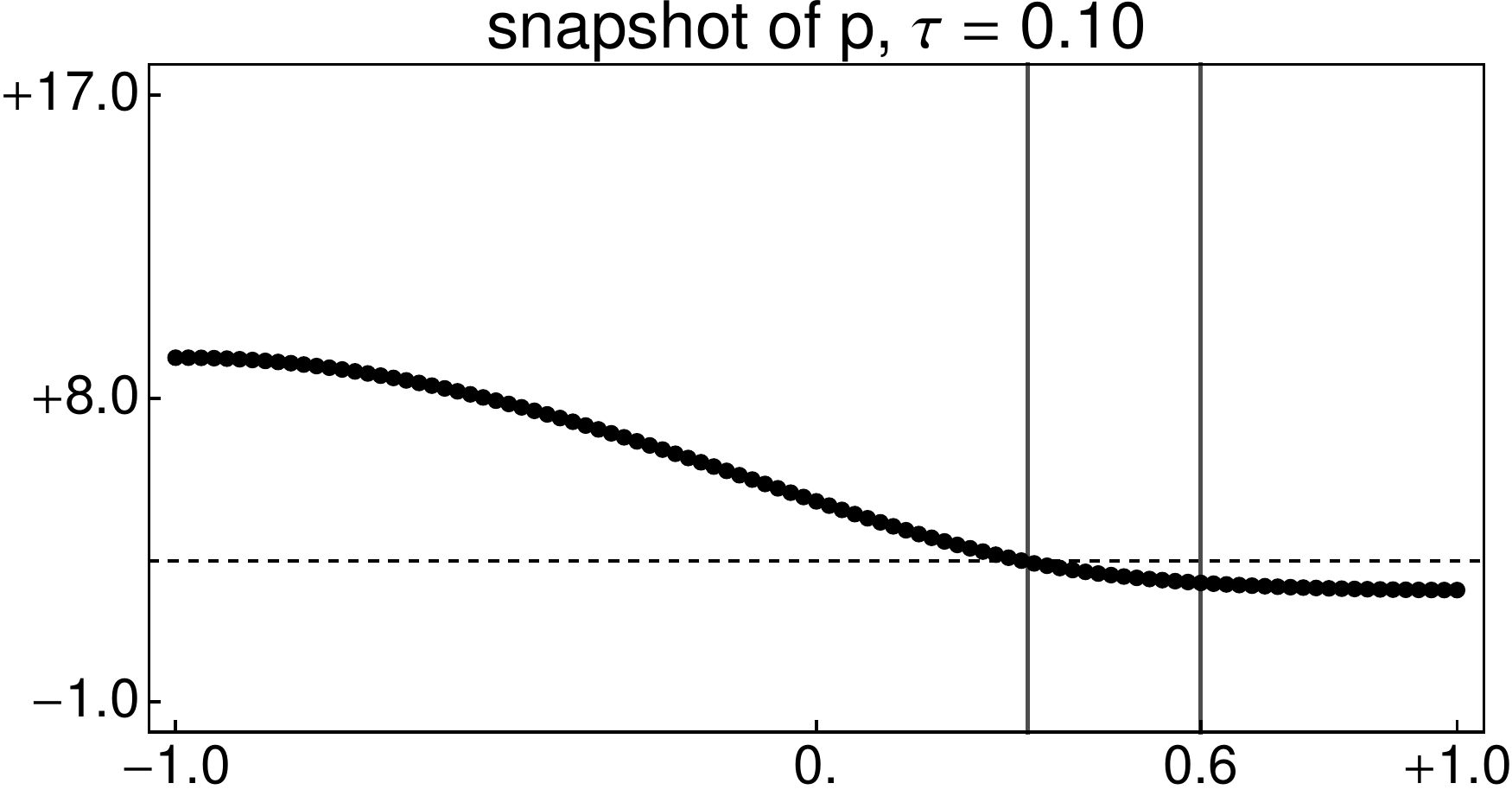}%
\end{tabular}%
}%
\caption{%
Type-II interface with $N=50$ that propagates into a region with microscopic oscillations. In  the macroscopic limit,
these oscillations cannot be described by a function $U$ but only in terms of a Young measure $\nu$, see \ref{eq:young.measure}.
}%
\label{Fig:front_4}%
\end{figure}%
\par
The last example is shown in Figure \ref{Fig:front_4} and concerns the
dynamics of type-II interfaces.  The discrete initial data $u_j\at{0}$
resemble a smooth function $U$ that penetrates the spinodal region,
and the initial transient regime therefore leads to spinodal
decomposition. This means the lattice dynamics forms a phase interface
that separates a region with $U>u_*$ from a region with strong
microscopic oscillations, where the latter can be regarded as an
approximation of a nontrivial Young measure $\nu$.  The interface is
initially at rest but depins at $\tau\approx0.02$ and propagates into
the oscillatory phase afterwards. The dynamics of type-II interfaces
are not yet well-understood. In particular, since there exist many
microscopic realizations of a given Young measure $\nu$, it is not
clear whether the macroscopic evolution can be completely
characterized in terms of the fields $P$ and $\mu\in[-1,+1]$, or
whether further macroscopic quantities are needed.  All subsequent
considerations are therefore restricted to type-I interfaces.
%
%
%
%
%
\subsection{Microscopic dynamics of phase interfaces }
\label{sec:Microscopic.Interfaces}%
%
%
To conclude our numerical investigations, we now discuss the
microscopic dynamics of moving type-I interfaces in greater detail.  In
this way we not only obtain a better understanding of the lattice
dynamics but also gain some inside into the analytical problems that
must be addressed when passing to the limit $\eps\to0$.
\par
For a moving type-I interface that is isolated -- i.\,e., sufficiently
far from any other interface and the boundary of the computational
domain -- the key numerical observations are illustrated in Figures
\ref{Fig:PhaseTransition} and \ref{Fig:FrontMotion} and can be
summarized as follows.

\begin{enumerate}
\item \emph{Sequentiality.} At each time there exists at most one
  index $k$ such that $u_k$ is inside the spinodal interval
  $[u_*,u^*]$. In other words, the interface moves because
  the $u_j$'s undergo the phase transition \emph{one after another},
  where `phase transition' means passage through the spinodal
  interval.
\item \emph{Fluctuations near the interface.} Each phase transition
  produces strong microscopic fluctuations.  These fluctuations are
  initially very localized, correspond to relatively large
  dissipation, and are spread out before the next phase transition
  occurs.
\item \emph{Temporal scale separation.} The time needed to spread out
  the fluctuations is as least as large as the time to undergo a phase
  transition. Moreover, both times are typically much smaller than the
  time between two subsequent phase transition.
\end{enumerate}
The fundamental sequentiality property can -- at least in an idealized
single-interface setting -- be derived from elementary comparison
principles for ODEs. The underlying idea is that as long as $u_k$ is
inside of $[u_*,u^*]$, the lattice equation ensures that any other
$u_j$ cannot enter the spinodal interval. For the piecewise quadratic
potential, we employ a similar argument in the proof of Theorem
\ref{thm:existence-single-interface} in order to show the persistence
of single-interface data.
\begin{figure}[ht!]%
  \centering
  \includegraphics[width=0.9\textwidth]{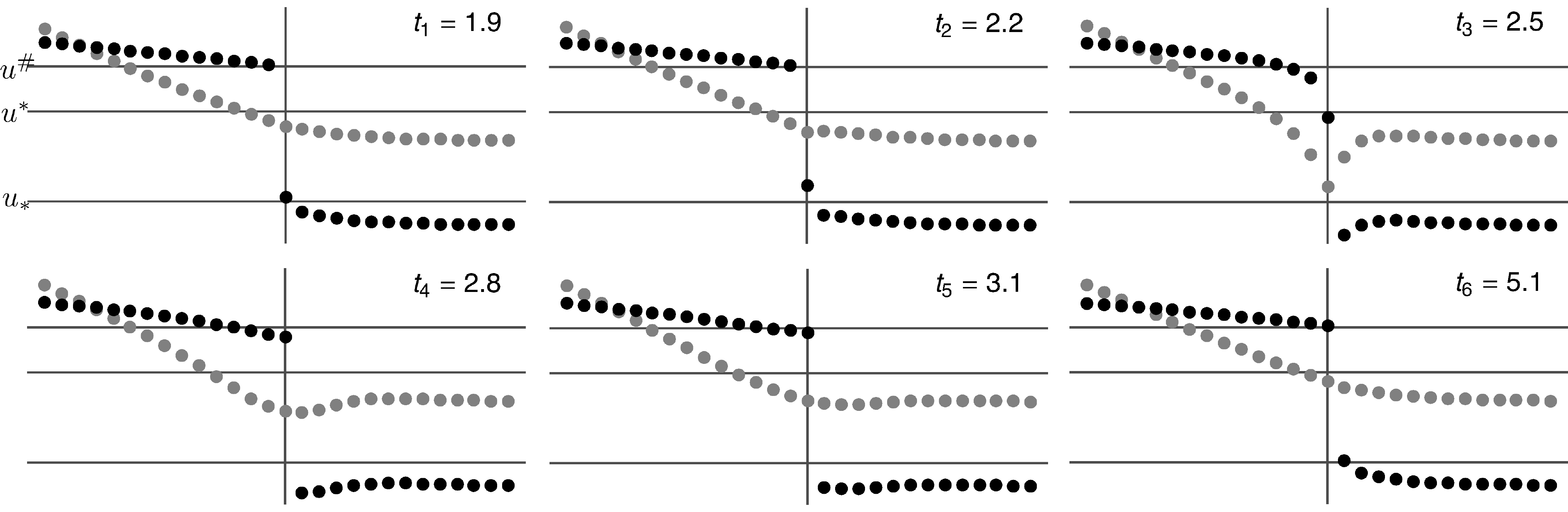}
  \caption{%
    Microscopic dynamics of a moving type-I interface: snapshots of
    $u_j\at{t}$ (black) and $p_j$ (gray, rescaled) against $j\in\Zset$
    at six non-equidistant times; the vertical line indicates the
    interface position at $j=k$.  At time $t_1$, the value $u_{k}$ has
    just crossed $u_*$ from below.  Afterwards, it passes rapidly
    through the spinodal interval $[u_*,u^*]$ and evokes
    microscopic fluctuations which are of order $1$ but localized near
    $j\approx k$. At time $t_3$, the value $u_{k}$ leaves the spinodal
    interval but the fluctuations are still present and not spread out
    before the time $t_5$.  Between $t_5$ and $t_6$, the data change
    only little but prepare $u_{k+1}$ for the next phase transition,
    that means $u_k\at{t_6}>u^*$ and $u_{k+1}\at{t_6}=u_*$.}
  \label{Fig:PhaseTransition}
\end{figure}%
\begin{figure}[ht!]%
\centering{%
\includegraphics[width=0.7\textwidth]{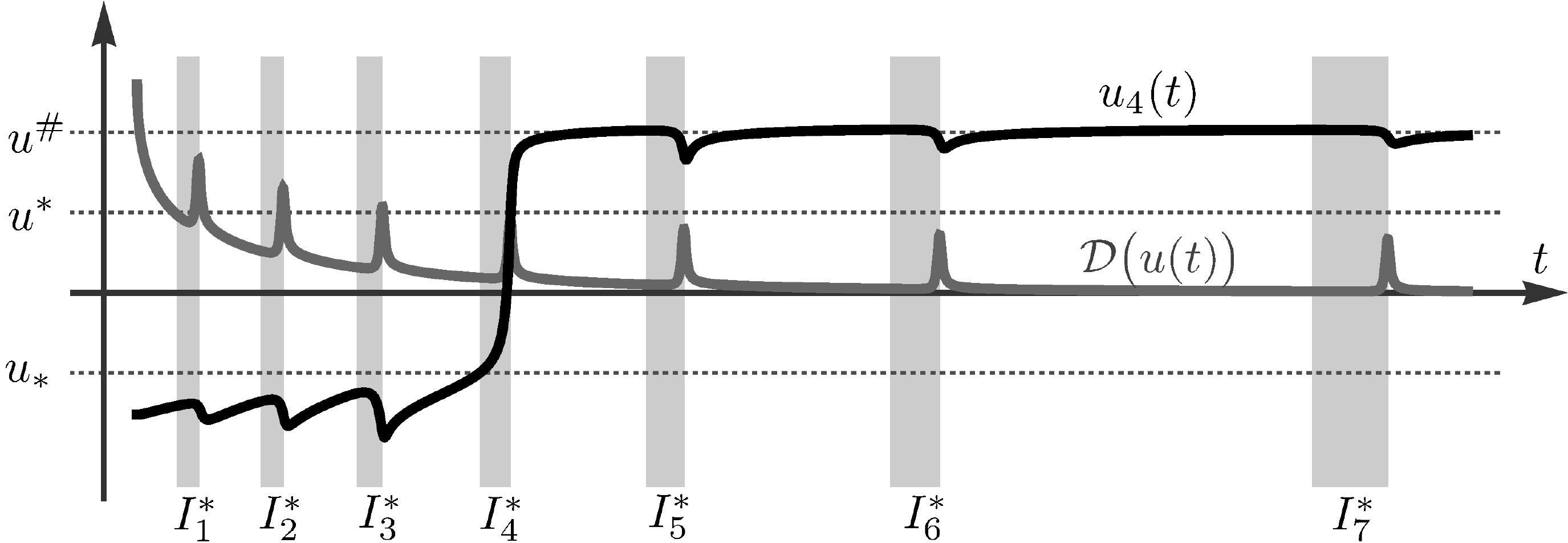}%
\\%
\includegraphics[width=0.7\textwidth]{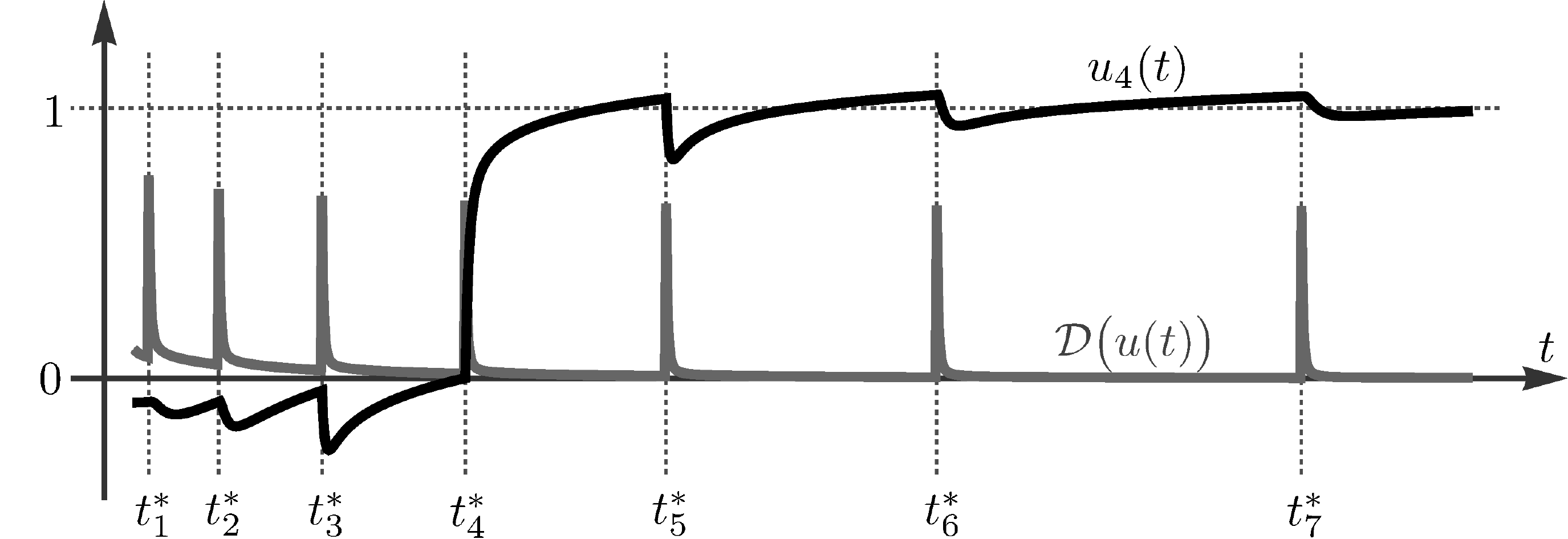}%
}%
\caption{%
Microscopic dynamics of a type-I interface that moves from $j=1$ to $j=8$.
The black curves represent the time trajectory of $u_4$, 
and the gray curves describe the (rescaled) dissipation of the system.
\emph{Top.} $\Phi$ is a smooth double well-potential and each interval $I^*_k$ contains all times
at which $u_k$ is inside of the spinodal interval. \emph{Bottom.}
For the piecewise quadratic potential, each interval $I_k^*$ 
is degenerate and consists of a single time $t_k^*$ with $u_k\at{t^*_k}=0$. 
}
\label{Fig:FrontMotion}%
\end{figure}%
\par
The microscopic fluctuations are much harder to describe rigorously. More precisely,
although it is relatively simple to understand the onset of fluctuations heuristically,
it is not obvious, at least to the authors, how to estimate
their spatial and temporal decay in the case of a generic double-well potential $\Phi$. For the piecewise
quadratic potential, however, the fluctuations can be controlled by
splitting $p=\Phi^\prime\at{u}$ into a regular part related to linear diffusion
and a sum over delayed and shifted variants of the discrete heat kernel, see 
the discussion in \S\ref{sect:ToyModel}.
\par
A further challenge for any rigorous treatment is to give a suitable
description of the different time scales in the problem. For instance,
in order to guarantee that each interface propagates with finite speed
on the macroscopic scale, one has to show that the microscopic time
between two adjacent phase transitions is of order
$\nDO{\eps^{-1}}$. Moreover, proving that any limit function $P$ is in
fact continuous at the interface requires to show that both the time
for each phase transition and the decay time of the fluctuations are
much smaller than $\nDO{\eps^{-1}}$. For piecewise quadratic $\Phi$,
the problem is again much simpler.  At first, phase transitions take
place at precise times due to the degenerate spinodal region, and
second, all other time scales can be related to the properties of the
discrete heat kernel.
%

%
\subsection{Effective evolution equations for the macroscopic dynamics}
\label{sec:Limit.Models}

We now derive the free boundary problem for the dynamics of type-I
interfaces on a heuristic level. Since our arguments are very similar
to those for the viscous approximation, we only sketch the main ideas
and refer to \cite{EvPo04} for more details.
\par
We first suppose that the lattice data $u_j$ and $p_j$ converge
strongly as $\eps\to0$ to macroscopic functions $U$ and $P$ which are
sufficiently regular.  For simplicity we also assume that there is
only a single interface located at $\xi^*\at{\tau}$ and that $U$
satisfies the \emph{phase condition}
\begin{align}
\label{eq:LD.phase.cond}
U>u^*\quad\text{for}\quad \xi<\xi_*\quad\text{an}\quad
U<u_*\quad\text{for}\quad \xi>\xi_*.
\end{align}
Under these assumptions, and using the weak formulation of \eqref{eq:Scaled.Lattice}, we readily verify that
that macroscopic evolution is governed by
\emph{bulk diffusion} via
\begin{align}
\label{eq:LD.bulk}
\partial_\tau U = \partial_\xi^2 P\quad\text{with}\quad P=\Phi^\prime\at{U}\quad\text{for all}\quad 
\tau\geq0,\;\; \xi\neq\xi^*\at\tau
\end{align}
and the \emph{generalized Stefan condition}
\begin{align}
\label{eq:LD.stefan}
\jump{P}=0\quad \text{and}\quad 
\frac{\dint\xi^*}{\dint\tau}\jump{U}+\jump{\partial_{\xi}P}=0\qquad\text{for}\quad \xi=\xi^*\at\tau,
\end{align}
which ensures that \eqref{eq:LD.bulk} holds in a distributional sense even across of the interface. 
Here $\jump{U}$ denotes as usual the jump of $U$ across the interface, this means
\begin{align*}
\jump{U}\at\tau
=U_+\at\tau- U_-\at\tau,\qquad U_\pm\at\tau:=
\lim_{h\searrow0}U\pair{\tau}{\xi^*\at\tau\pm{h}}.
\end{align*}
The nontrivial part is to identify a further dynamical interface
condition since \eqref{eq:LD.bulk} and \eqref{eq:LD.stefan} do not the
determine the evolution of $\xi^*$ completely.  In view of the
numerical results, we propose the following \emph{hysteretic flow
  rule}, see Figures \ref{fig:hysteresis} and \ref{fig:interfaces}: At
almost each time $\tau>0$, the interface is either
\begin{enumerate}
\item \emph{standing} with $\jump{P}=\jump{\partial_\xi
    P}=\frac{\dint}{\dint\tau} \xi^*=0$ and
  $P\in[p_*,p^*]$, or
\item \emph{propagating into $U<u_*$} with $P=p^*$,
  $\frac{\dint}{\dint\tau} \xi^*>0$, and $\jump{U}=u_*-u^\#<0$, or
\item \emph{propagating into $U>u^*$} with $P=p_*$,
  $\frac{\dint}{\dint\tau} \xi^*<0$, and $\jump{U}=u_\#-u^*<0$.
\end{enumerate}
Notice that the combination of phase condition, bulk diffusion, Stefan
condition, and flow rule provides -- at least on a formal level -- a
well-posed free boundary value problem and that the limit model can
easily be generalized to an interface with $U<u_*$ for $x<\xi_*$ and
$U>u^*$ for $\xi>\xi_*$, and to the case of finitely many phase
interfaces.
\par
The interface laws can also be derived in a more sophisticated way.
For the viscous approximation, it has been shown in \cite{EvPo04} --
also assuming sufficiently strong convergence as $\eps\to0$ -- that
any limit solution must satisfy the distributional \emph{entropy
  condition}
\begin{align}
  \label{eq:LD.entropy}
  \partial_\tau
  \Psi(U)-\partial_\xi\bat{\Upsilon(P)\partial_\xi P}\leq 0
\end{align}
where $\Upsilon$ is an arbitrary but increasing function and $\Psi$
defined by $\Psi^\prime\at{U}=\Upsilon\bat{\Phi^\prime\at{U}}$. This
family of local entropy inequalities can also be established in the
lattice case. In fact, the scaled equation \eqref{eq:Scaled.Lattice}
implies
\begin{align*}
\partial_{\tau} \Psi\at{U}=\Upsilon\at{P}\laplace_\eps P=
\nabla_{+\eps}\Bat{\Upsilon\at{P}\nabla_{-\eps} P}-
\bat{\nabla_{+\eps}\Upsilon\at{P}}\bat{\nabla_{+\eps} P}\leq
\nabla_{+\eps}\Bat{\Upsilon\at{P}\nabla_{-\eps} P}
\end{align*}
due to $\Upsilon^{\prime}\geq0$, and passing to $\eps\to0$ in the weak
formulation of \eqref{eq:Scaled.Lattice} we get
\eqref{eq:LD.entropy}. The key observation is that
\eqref{eq:LD.entropy} combined with \eqref{eq:LD.stefan} yields
\begin{align*}
\frac{\dint\xi^*}{\dint\tau}\Bat{\jump{\Psi\at{U}}-\Upsilon\at{P}\jump{U}}\leq 0\qquad\text{and hence}\qquad
\frac{\dint\xi^*}{\dint\tau}\at{\int_{U_-}^{U_+}\Upsilon\bat{\Phi^\prime\at{s}}-\Upsilon\at{P}\dint{s}}\leq 0,
\end{align*}
where $U_->U_+$ and $P_-=P_+=P$ hold thanks to \eqref{eq:LD.phase.cond} and \eqref{eq:LD.stefan}.
The flow rule now follows from evaluating the latter inequality for appropriately chosen functions $\Upsilon$, 
see again \cite{EvPo04} for the details.
\bigpar
In the special case that $\Phi$ is given by the piecewise quadratic
potential \eqref{eq:Intro.ToyPotential}, the bulk diffusion can be
written as
\begin{align*}
\partial_\tau P = \partial_\xi^2 P\quad\text{for all}\quad 
\tau\geq0,\;\; \xi\neq\xi^*\at\tau,
\end{align*}
whereas the Stefan condition simplifies to
\begin{align*}
\jump{P}=0\quad \text{and}\quad 
2\frac{\dint}{\dint\tau}\xi_*=\jump{\partial_{\xi}P}\qquad\text{for}\quad \xi=\xi^*\at\tau.
\end{align*}
In the space of distributions, both equations can be combined to 
\begin{align}
\label{eq:LD.simple1}
\partial_\tau \at{P+\mu} = \partial_\xi^2 P \qquad\text{for all}\quad
\tau\geq0,\;\; \xi\in\Rset,
\end{align}
where the phase field
\begin{align}
\label{eq:LD.simple2}
\mu\pair{\tau}{\xi} := \sgn\bat{\xi^*\at{\tau}-\xi} 
\end{align}
is well-defined outside of the interface and takes values in
$\{-1,+1\}$.  We mention that the hysteretic flow rule can be encoded
as
\begin{align}
\label{eq:LD.simple3}
\mu=\calM\ato{P}
\end{align}
where $\calM$ is a particular \emph{relay operator}, see for instance \cite{Vis94,SpBr96},
and that well-posedness of the initial value problem for 
\eqref{eq:LD.simple1} and \eqref{eq:LD.simple3} has been proven in
\cite{Visintin06}. For our purposes, however, 
the differential relations from above are more convenient than the 
discontinuous hysteresis operator $\calM$. In particular, 
under the sharpened phase condition 
\begin{align*}
U\pair{\tau}{\xi}\geq u_\#= -2\qquad\text{for}\qquad \xi>\xi^*\at\tau,
\end{align*}
which implies that the interface is either at rest or
propagates to the right, the flow rule is equivalent to
\begin{align}
\label{eq:LD.simple4}
P\bpair{\tau}{\xi^*\at\tau}\in[-1,+1]\quad\text{with}\quad P
\bpair{\tau}{\xi^*\at\tau}=+1\quad\text{for}\quad\frac{\dint\xi^*}{\dint\tau}\at\tau>0.
\end{align}
In \S\ref{sect:ToyModel} we prove that the simplified free boundary
value problem \eqref{eq:LD.simple1} with \eqref{eq:LD.simple2} and
\eqref{eq:LD.simple4} indeed governs the parabolic scaling limit of
\eqref{eq:lattice} with \eqref{eq:Intro.ToyPotential} provided that we
impose macroscopic single-interface initial data as described in
Assumption \ref{Ass:InitialData}.


\section{Rigorous analysis for the piecewise quadratic potential}\label{sect:ToyModel}
In this section we characterize the lattice dynamics with
piecewise quadratic potential. In particular,  from now on we suppose that
\begin{align}
  \label{eq:pw-linear-potential}
  \Phi'(u)
  =
  u-\sgn{u},
\end{align}
where we define the sign function by
\begin{align}
\label{eq:pw-linear-signum}
\sgn{u}=\left\{ \begin{array}{lcl}-1&\text{for}&u<0,\\+1&\text{for}&u\geq0.\end{array}\right.
\end{align}
The condition $\sgn{0}=+1$ is essential for establishing global
existence and uniqueness of single-interface solutions, see the proof
of Theorem \ref{thm:existence-single-interface} and the remark
afterwards.
\par
Our goal in this section is to prove $\at{i}$ that a certain class of
well-prepared microscopic initial data give rise to a single phase
interface that moves in a certain direction, and $\at{ii}$ that the
macroscopic dynamics for $\eps\to0$ can in fact be described by the
simplified free boundary problem from \S\ref{sec:Limit.Models}.
%
%
%
%
\subsection{Existence of single-interface solutions}
\label{sec:exist-single-interf}
In order to make precise what we mean by single-interface solution, we
define for each $k\in\Zset$ the state space
\begin{align*}
X_k = \Big\{u\in\ell^\infty\at\Zset\;:\quad 0
    <
    \inf_{j < k} u_j
    \leq
    \sup_{j < k} u_j
    <
    \infty,\quad
    -2
    <
    \inf_{j \geq k} u_j
    \leq
    \sup_{j \geq k} u_j
    <
    0
    \Big\},
\end{align*}
see Figure \ref{Fig:interface_data} for an illustration. As shown
below, the dynamics for initial data chosen from $X_{k}$ is as
follows: The system stays inside $X_{k}$ until some maximal time
$t^*_{k}>0$ with $\lim_{t\to t^*_{k}}u_{k}\at{t}=0$ at which $u_k$
undergoes a phase transition by crossing the spinodal value $0$ from
below. For times $t>t^*_{k}$, the system evolves inside of $X_{k+1}$
until $u_{k+1}$ changes from negative to positive phase at time
$t^*_{k+1}>t^*_{k}$.  By iteration we therefore find a phase interface
that moves to the right, where we allow for $t^*_k=\infty$ to account
for standing interfaces.
\begin{figure}[ht!]%
\centering{%
\includegraphics[width=0.9\textwidth]{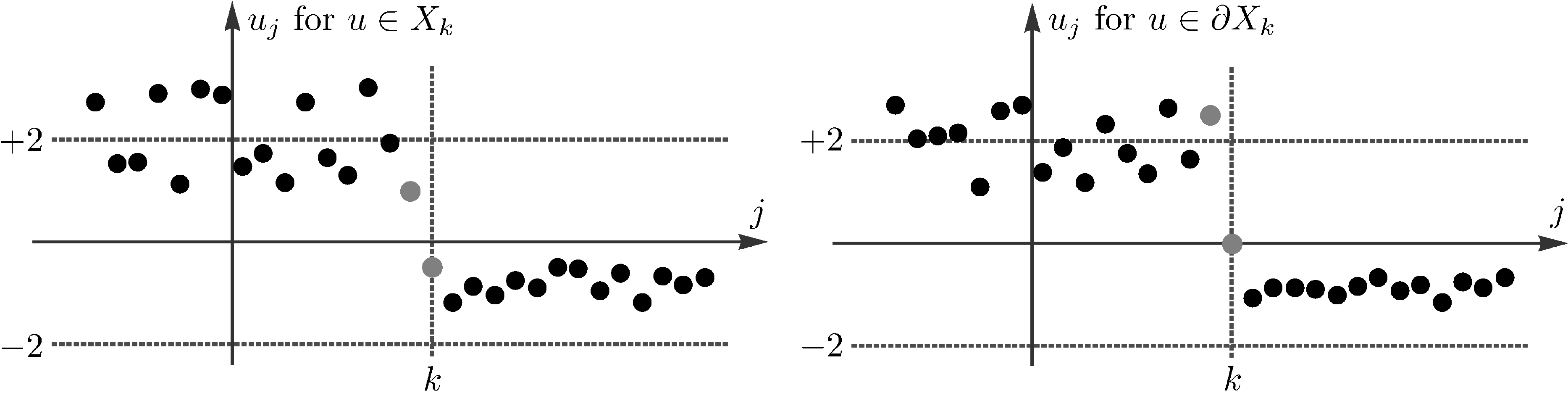}%
}%
\caption{%
  States from $X_k$ have a single phase interface at $k$ (\emph{left
    panel}, $u_{k-1}$ and $u_k$ are highlighted).  Under the dynamics,
  this interface is either stationary for all times or shifts
  eventually to the right due to a phase transition of $u_k$, because
  the system can reach the boundary $\partial X_k$ only via $u_k=0$.
  When this happens at time $t^*_k$ (\emph{right panel}), we also have
  $u_{k-1}\at{t^*_k}>2$ and $\dot{u}_k$ jumps from
  $\dot{u}_k\at{t^*_k-0}\geq0$ to
  $\dot{u}_k\at{t^*_k+0}\geq4$. Afterwards the state of the system
  belongs to $X_{k+1}$ until $u_{k+1}$ undergoes a phase transition at
  time $t^*_{k+1}$.  }%
\label{Fig:interface_data}%
\end{figure}%
\begin{definition}[single-interface solution]
  \label{def:single-interface-solution}
  A continuous function $u\colon[0,\infty) \to \ell^\infty\at\Zset$ is
  called \emph{single-interface solution} to \eqref{eq:lattice} with
  potential \eqref{eq:pw-linear-potential} if there exists
  $k_1\in\Zset$ along with a sequence $\at{t^*_k}_{k\geq
    k_1}\subset(0,\infty]$ such that the following conditions are
  satisfied with $t^*_{k_1-1}:=0$:
  \begin{enumerate}
  \item For all $k\geq k_1$, we have either $t^*_{k-1}=\infty$ or
    $t^*_{k-1}<t^*_{k}$,
  \item The function $u$ solves $\dot u_j = \laplace
    \Phi^\prime\at{u_j}$ for all times
    $t\in[0,\infty)\setminus\{t^*_k:k\geq k_1\}$,
  \item We have $u_k\at{t^*_k}=0$ and $\lim_{t\searrow
      t^*_k}\dot{u}_{k}\at{t}>0$ for all $k\geq k_1$ with
    $t^*_k<\infty$,
  \item The function $u$ takes values in $X_{k}$ on each time interval
    $(t^*_{k-1}, t^*_{k})$ with $t^*_{k-1}<\infty$.
  \end{enumerate}
  Moreover, a single-interface solution with $t^*_{k}=\infty$ for all
  $k\geq k_1$ is called a \emph{standing-interface solution}.
\end{definition}

For single-interface solutions in the sense of Definition
\ref{def:single-interface-solution}, any $u_k$ is continuous at the
phase transition time $t^*_k$, whereas $p_k$ jumps at $t^*_k$ from
$+1$ to $-1$. Notice that it is the other way around in the
macroscopic limit: when $\eps$ tends to $0$, $p$ becomes continuous
with respect to both $\tau$ and $\xi$, whereas $u$ jumps from $0$ to
$+2$ at the phase interface, see Figure \ref{Fig:FrontMotion} for an illustration.
\bigpar
As first main result we prove that single-interface solutions exist.
\begin{theorem}[existence and uniqueness of single-interface
  solutions]
  \label{thm:existence-single-interface}
  For given initial data in $X_{k_1}$, there exists a unique
  single-interface solution to \eqref{eq:lattice} with
  \eqref{eq:pw-linear-potential}. Moreover, this solution satisfies
  \begin{equation*}
    \sup_{t\in [0,\infty)} \sup_{j \in \bbZ} u_j(t)
    \leq
    D:=\max \big\{ 2, \,\sup_{j \in \bbZ} u_j\at{0} \big\},
  \end{equation*}
  and we have  
  \begin{align*}
    u_{k-1}\at{t^*_k} > 2,
    \qquad
    t^*_{k+1}-t^*_k \geq \ln\sqrt\frac{D+2}{D-2},
    \qquad
    \dot{u}_k\at{t^*_k}=\lim\limits_{t\searrow t^*_k} \dot{u}_k\at{t}=
    4+\lim\limits_{t\nearrow t^*_k}\dot{u}_k\at{t} \geq 4
  \end{align*}
  for all $k\geq k_1$ with $t^*_k<\infty$.
\end{theorem}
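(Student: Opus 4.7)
The plan is to exploit a linearization of the problem within each region $X_k$: writing $p_j := \Phi^\prime(u_j)$ and using $\Phi^\prime(u) = u - \sgn u$, we have on $X_k$ the identity $p_j = u_j - \mu^{(k)}_j$ with $\mu^{(k)}_j = +1$ for $j < k$ and $\mu^{(k)}_j = -1$ for $j \geq k$. Since $\mu^{(k)}$ is constant in time while the state remains in $X_k$, the lattice equation $\dot u_j = \laplace p_j$ becomes $\dot p_j = \laplace p_j$, that is, the linear discrete heat equation. On every phase $(t^*_{k-1}, t^*_k)$ the evolution of $p$ is therefore given by the discrete heat semigroup, and $u$ is reconstructed via $u_j = p_j + \mu^{(k)}_j$.

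Given this reduction, I would construct the solution inductively. Having built it up to time $t^*_{k-1}$ with state in $\bar X_k$, I solve the discrete heat equation for $p$ on $[t^*_{k-1}, \infty)$, define $t^*_k$ as the infimum of times at which some $p_j$, $j \geq k$, reaches $+1$, and at $t = t^*_k$ update the state by replacing $p_k = +1$ with $p_k = -1$; continuity of $u_k = p_k + \mu_k = 0$ is preserved because $\mu_k$ simultaneously flips from $-1$ to $+1$. The discrete maximum principle handles the containment in $X_k$: the initial $p$ satisfies $\inf p > -1$ (from $u^0_j > 0$ for $j < k_1$ and $u^0_j > -2$ for $j \geq k_1$), and this bound is preserved both by the heat evolution and by the transition jumps, yielding $u_j > 0$ for $j < k$ and $u_j > -2$ for $j \geq k$. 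The only way to exit $X_k$ is thus $p_j \to 1$ for some $j \geq k$. To show this actually happens at index $k$, I use a propagation argument: if $p_{j^*}(t^*_k) = 1$ for some $j^* > k$, then $\dot p_{j^*}(t^*_k) \geq 0$ combined with $p_{j^* \pm 1}(t^*_k) \leq 1$ (by the infimum definition of $t^*_k$) forces $p_{j^* \pm 1}(t^*_k) = 1$, and iterating this toward smaller indices gives $p_k(t^*_k) = 1$ as well.

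The quantitative bounds then drop out by direct algebra or ODE comparison. The equations for $u_k$ on $X_k$ and on $X_{k+1}$ differ by the additive constant $\pm 2$, giving the jump $\dot u_k(t^*_k+) - \dot u_k(t^*_k-) = 4$; combined with $\dot u_k(t^*_k-) \geq 0$, which holds because $u_k$ reaches $0$ from below, this yields $\dot u_k(t^*_k+) \geq 4$. Evaluating the same sign condition at $u_k = 0$ produces $u_{k-1}(t^*_k) + u_{k+1}(t^*_k) \geq 2$, and the strict negativity of $u_{k+1}(t^*_k)$ forces $u_{k-1}(t^*_k) > 2$. For the uniform upper bound, note that $\sup_j p_j(0) \leq D - 1$, the heat evolution preserves this, and every transition replaces a value $+1$ by $-1$, so $p_j \leq D - 1$ and hence $u_j = p_j + \mu_j \leq D$ throughout. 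Finally, on $X_{k+1}$ the inequality $\dot u_k = u_{k-1} + u_{k+1} - 2 u_k + 2 \leq D + 2 - 2 u_k$ compared with $\dot Q = D + 2 - 2 Q$, $Q(t^*_k) = 0$, gives $u_k(t) \leq \tfrac{D+2}{2}\bat{1 - e^{-2(t - t^*_k)}}$; applying the index-shifted bound $u_k(t^*_{k+1}) > 2$ then yields the logarithmic gap $t^*_{k+1} - t^*_k \geq \ln\sqrt{(D+2)/(D-2)}$. Uniqueness is built into the construction, since the heat evolution, the infimum definition of $t^*_k$, and the update rule are each unambiguously determined at every step.

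The step I expect to be the main technical obstacle is the propagation argument showing that the first $X_k$-exit is located at the interface index $k$ rather than at some $j^* > k$. In the generic case it follows from the chain-of-maxima argument sketched above, but the degenerate possibility of genuinely simultaneous transitions at multiple indices must be ruled out or consistently interpreted. The convention $\sgn 0 = +1$ introduced in \eqref{eq:pw-linear-signum} is essential here, since it unambiguously assigns the boundary value $u_j = 0$ to the upper phase and thus pins down the unique outgoing branch of the dynamics at every boundary crossing.
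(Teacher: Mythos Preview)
Your overall strategy---induction on $k$, exploiting linearity of the dynamics within each $X_k$, and extracting the quantitative bounds via scalar ODE comparison---is exactly the paper's. The paper works directly with $u$ rather than passing to $p$, writing $\dot u_j = \laplace u_j + 2\delta_j^{k-1} - 2\delta_j^k$ on $X_k$; your heat-equation formulation for $p$ is equivalent and in fact anticipates the decomposition used later in Corollary~\ref{cor:structure-of-p}. The derivations of the jump in $\dot u_k$, the inequality $u_{k-1}(t^*_k)>2$, the uniform bound $u_j\leq D$, and the logarithmic gap all match the paper essentially line for line.

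The one substantive difference is your argument that the first exit from $X_k$ occurs at index $k$. The paper handles this by a direct scalar comparison: for $j>k$ one has $\dot u_j + 2u_j = u_{j-1}+u_{j+1} < 0$ while the state is in $X_k$, hence $u_j(t)\leq e^{-2t}u_j(0)<0$ and $u_j$ never reaches $0$; analogous bounds for $j<k$ and $j=k$ show the only possible boundary crossing is $u_k=0$. This sidesteps your chain-of-maxima argument entirely. Your propagation argument is not wrong, but the gap you flag is real and slightly mislocated: the issue is not merely simultaneous transitions at finitely many sites, but that your iteration, once started at some $j^*>k$, propagates to \emph{both} neighbours and hence forces $p_j(t^*_k)=1$ for \emph{all} $j\geq k$. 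One can close this by contradiction---the heat-kernel representation $p_j(t)=\sum_i g_{j-i}(t)\,p_i(0)$ together with $\sup_{i\geq k}p_i(0)<1$ yields $\limsup_{j\to\infty}p_j(t^*_k)<1$---but the paper's one-line ODE comparison is the cleaner route.
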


\begin{proof}
  We assume, without loss of generality, that $k_1=1$ and derive all
  claims by induction with respect to $k\geq k_1$.

  \par
  \underline{\emph{Dynamics inside of $X_{1}$:}}
  For states $u\in X_{1}$ we have $\sgn{u_j}=-\sgn \at{j-1}$ and hence
  \begin{align}
    \label{eqn:existence-single-interface.eqn1}
    \dot{u}_j
    =
    \laplace \Phi^\prime\at{u_j}
    =
    \laplace u_j + 2\delta _j^{0} - 2 \delta_{j}^{1},
  \end{align}
  where $\delta_j^k$ is the Kronecker delta.  In particular, the right
  hand side of \eqref{eqn:existence-single-interface.eqn1} is
  Lipschitz continuous with respect to the $\ell^\infty$-norm of $u$
  and this implies the local existence of a unique solution that is
  smooth in time and takes values in the open set $X_{1}$.  From the
  definition of $X_{1}$ we also infer that
  \begin{align*}
    \begin{array}{rccclcl}
      0 &\leq& \dot u_j\at{t} + 2u_j\at{t} &\leq& 2D 
      &\quad &\text{for } j < 1,
      \\
      0 &\leq& \dot u_j\at{t}+2u_j\at{t}&\leq& D+2
      &\quad &\text{for } j = 0,
      \\
      - 4 &\leq& \dot u_j\at{t}+2u_j\at{t}&&
      &\quad &\text{for } j = 1,
      \\
      -4 &\leq& \dot u_j \at{t}+2u_j\at{t}&\leq&0 
      &\quad &\text{for } j > 1,
    \end{array}
  \end{align*}
  and the comparison principle for scalar ODEs gives
  \begin{align*}
    \begin{array}{rccclcl}
      e^{-2 t}u_j\at{0} 
      &\leq&
      u_j(t)
      &\leq&
      e^{-2t} u_j\at{0}+\at{1-e^{-2t}} D
      &\quad &\text{for } j < 1,
      \\
      e^{-2 t}u_j\at{0} -2 \at{1-e^{-2 t}}
      &\leq&
      u_j(t)
      &&
      &\quad &\text{for } j \geq 1,
      \\
      &&
      u_j(t)
      &\leq&
      e^{-2 t}u_j\at{0}
      &\quad &\text{for } j > 1.
    \end{array}
  \end{align*}
  Using these estimates, we now conclude that the local solution with
  values in $X_{1}$ either exists for all times (in which case we set
  $t^*_{1}:=\infty$) or reaches the boundary of $X_{1}$ at some time
  $0< t^*_{1}<\infty$ via $u_{1}\at{t^*_{1}}=0$. Moreover, since both
  $u_1$ and $\dot{u}_1$ are continuous at any $t\in[0,t^*_1)$ we have
  $\lim_{t\nearrow t^*_1} \dot{u}_1\at{t}\geq0$.

  \par
  \underline{\emph{Phase transition at $t^*_{1}$:}}
  Now suppose that $t^*_{1}<\infty$. From
  \eqref{eqn:existence-single-interface.eqn1} and the above estimate
  for $u_j\at{t}$ we conclude that $u_j\at{t}$ converges for any
  $j\in\Zset$ as $t\nearrow t^*_{1}$ to some well-defined limit
  $u_j\at{t^*_{1}}$, where $u_1\at{t_1^*}=0$ as well as
  \begin{align*}
    0
    <
    \inf\limits_{j<1} u_j\at{t^*_{1}}
    \leq
    \sup\limits_{j<1} u_j\at{t^*_{1}}
    <
    D,
    \qquad
    -2
    <
    \inf\limits_{j>1} u_j\at{t^*_{1}}
    \leq
    \sup\limits_{j>1} u_j\at{t^*_{1}}
    <
    0.
  \end{align*}
  We therefore have
  \begin{align*}
    \sup\limits_{j\in\Zset}\abs{\laplace\Phi^\prime\bat{u_{j}\at{t^*_{1}}}}\leq
    {C}
  \end{align*}
  for some constant $C$. On the other hand, $u_{0}\at{t^*_1}>0$ and
  $-2<u_{2}\at{t^*_1}<0$ ensure that
  \begin{align*}
    \laplace\Phi^\prime\bat{u_1\at{t^*_1}}
    =\Bat{u_{0}\at{t^*_1}+u_{2}\at{t^*_1}}-
    \Bat{\sgn u_{0}\at{t^*_1}+\sgn u_{2} \at{t^*_1}- 2 \sgn0}>0
  \end{align*}
  thanks to $\sgn0=+1$. These results reveal that the vector field of
  the dynamical system $\dot{u}=\laplace\Phi^\prime\at{u}$ is well
  defined in the state $u\at{t^*_1}\in\partial X_1\cap \partial X_2$
  and points \emph{into} $X_2$, where $\partial X_k$ denotes the
  topological boundary of $X_k$ in $\ell^\infty\at\Zset$.  We
  therefore find a time $t_*>t^*_1$ such that the local solution to
  the microscopic dynamics from the first step has unique continuation
  to the time interval $[0,t_*)$ with $u\at{t}\in{X_2}$ for all
  $t\in(t^*_1,t_*)$. By construction, $u_j$ and $\dot{u}_j$ are
  continuous at $t^*_1$ for $j\neq1$, whereas
  \begin{align*}
    {u}_1\at{t^*_1}=\lim_{t\to t^*_1}{u}_1\at{t}=0,\qquad
    \dot{u}_1\at{t^*_1}=\lim_{t\searrow
      t^*_1}\dot{u}_1\at{t}=4+\lim_{t\nearrow
      t^*_1}\dot{u}_1\at{t}\geq4.
  \end{align*}
  thanks to $\sgn u_1\at{t^*_1}= \lim_{t\searrow t^*_1}\sgn
  u_1\at{t}=+1$ and $\lim_{t\nearrow t^*_1} u_{1}\at{t^*_1}=-1$.
  Finally, repeating all arguments from the first step we show that
  this solution in $X_2$ exists uniquely until a time $t^*_2$ with
  $t^*_1<t^*_2\leq\infty$ and $u_2\at{t^*_2}=0$ in case of
  $t^*_2<\infty$.

  \par
  \underline{\emph{Condition for $u_{0}\at{t^*_1}$ and lower bound for
      $t^*_{2}-t^*_{1}$:}}
  For $0<t<t^*_{1}$ we infer from
  \eqref{eqn:existence-single-interface.eqn1} that
  \begin{align*}
    \dot u_{1}\at{t} &= \bat{u_{0}\at{t} + u_{2}\at{t} - 2
      u_{1}(t)}-\bat{\at{+1}+\at{-1}-2\at{-1}} \leq
    u_{0}\at{t}+u_{2}\at{t}-2 u_{1}\at{t} -2.
  \end{align*}
  Passing to the limit $t\nearrow t^*_{1}$ we therefore get
  \begin{align*}
    0\leq \dot u_{1}\at{t^*_1} \leq u_{0}\at{t^*_1}+u_{2}\at{t^*_1} -2
    < u_{0}\at{t^*_{1}}-2
  \end{align*}
  and hence $u_{0}\at{t^*_{1}}>2$. In the same way we prove
  $u_{1}\at{t^*_{2}}>2$ and conclude that the interface can shift from
  $k=1$ to $k=2$ at time $t^*_2$ only if
  \begin{align*}
    2 < u_1\at{t^*_2}.
  \end{align*}
  Moreover, $u\at{t}\in X_2$ for all $t\in(t^*_1,t^*_2)$ implies
  \begin{align*}
    \dot u_{1}\at{t} &= \bat{u_{0}\at{t} + u_{2}\at{t} - 2
      u_{1}(t)}-\bat{\at{+1}+\at{-1}-2\at{+1}}\leq D + 2 - 2
    u_1\at{t}.
  \end{align*}
  By the comparison principle we therefore have
  \begin{align*}
    u_{1}\at{t} \leq \at{1-e^{-2t+2t^*_1}}\frac{D+2}{2},
  \end{align*} 
  and setting $t=t^*_2$ we obtain the estimate for $t^*_2-t^*_1$.

  \par
  \underline{\emph{Final step:}}
  All arguments derived above can easily be iterated, so the
  assertions of Theorem \ref{thm:existence-single-interface} are
  proved by induction. In particular, the lower bound for
  $t^*_{k+1}-t^*_k$ ensures that the solution exists for \emph{all}
  times $t\geq0$.
\end{proof}

\begin{samepage}
\begin{remark}
  Theorem \ref{thm:existence-single-interface} can be adapted to
  finite lattice systems with $j=1,\ldots,M$ provided that the
  corresponding ODE system is closed by imposing
\begin{enumerate}
\item either \emph{homogeneous Neumann boundary conditions} via $u_0\at{t}=u_1\at{t}$ and $u_{M+1}\at{t}=u_M\at{t}$, 
\item or \emph{inhomogeneous Dirichlet boundary conditions} via $u_0\at{t}\equiv c_1>0$ and $u_{M+1}\at{t}\equiv c_2<0$,
\end{enumerate}
where $j=0$ and $j=M+1$ are the ghost indices.
\end{remark}
\end{samepage}

The proof of Theorem \ref{thm:existence-single-interface} reveals that
the condition $\sgn 0=1$ (or equivalently, the right-sided continuity
of $\Phi^\prime$) is crucial for the microscopic dynamics to be well
defined after the phase transition time $t^*_k$ as it guarantees that
$u_k\at{t^*_k}=0$ implies the \emph{strict} inequality
$\dot{u}_k\at{t^*_k+0}>0$. Our convention \eqref{eq:pw-linear-signum}
is therefore implicitly but intimately related to phase interfaces
that propagate \emph{into} the phase $u<0$ (to observe phase
propagation into to the other phase $u>0$, one has to set
$\sgn{0}=-1$). This subtle direction selection reflects that the
spinodal region is actually an isolated point and can be regarded as
the degenerate analogue to the hysteresis flow rule \eqref{eq:Intro.Flow}.
\begin{corollary}[criterion for standing interfaces]
  For single-interface initial data
  \begin{equation*}
    u\at{0}\in X_{k_1}\quad\text{ with } \quad
    \sup_{j\in\Zset}u_j\at{0}\leq 2
  \end{equation*}
  we find $t^*_{k_1}=\infty$ and hence a standing-interface solution.
\end{corollary}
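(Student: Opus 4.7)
The plan is a short contradiction argument that leverages two ingredients already produced by Theorem \ref{thm:existence-single-interface}: the global upper bound $\sup_{t\geq 0}\sup_{j\in\Zset} u_j(t)\leq D$ and the strict inequality $u_{k-1}(t^*_k)>2$ at any finite phase transition time.

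First, I would invoke Theorem \ref{thm:existence-single-interface} to obtain the unique single-interface solution $u$ with the associated sequence $(t^*_k)_{k\geq k_1}$ and the uniform bound
\begin{equation*}
  \sup_{t\in[0,\infty)}\sup_{j\in\Zset} u_j(t) \;\leq\; D \;=\; \max\Bat{2,\,\sup_{j\in\Zset} u_j(0)}.
\end{equation*}
Under the hypothesis $\sup_{j}u_j(0)\leq 2$ we have $D=2$, so $u_j(t)\leq 2$ holds for every $j\in\Zset$ and every $t\geq 0$.

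Next, I would argue by contradiction that $t^*_{k_1}=\infty$. Assume instead that $t^*_{k_1}<\infty$. Then Theorem \ref{thm:existence-single-interface} yields the strict inequality $u_{k_1-1}(t^*_{k_1})>2$, which contradicts the uniform bound $u_{k_1-1}(t^*_{k_1})\leq D=2$. Hence $t^*_{k_1}=\infty$, and from $t^*_{k-1}<t^*_k$ (or $t^*_{k-1}=\infty$) we conclude $t^*_k=\infty$ for all $k\geq k_1$, so the solution is a standing-interface solution in the sense of Definition \ref{def:single-interface-solution}.

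There is essentially no obstacle here: the corollary is an immediate consequence of the quantitative information produced in the proof of Theorem \ref{thm:existence-single-interface}. The only subtle point worth emphasizing is that the inequality $u_{k_1-1}(t^*_{k_1})>2$ is \emph{strict}, which is precisely what rules out the borderline case $D=2$ and makes the conclusion work without any additional smallness assumption on the initial data.
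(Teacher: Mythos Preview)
Your argument is correct and is precisely the intended one: the paper states this corollary without proof, as an immediate consequence of Theorem~\ref{thm:existence-single-interface}, and your contradiction between the uniform bound $D=2$ and the strict inequality $u_{k_1-1}(t^*_{k_1})>2$ is exactly the reasoning the reader is meant to supply.
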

We finally characterize the evolution of $p_j\at{t}=\Phi'(u_j(t))$ for 
single-interface solutions. The key observations are as follows:
\begin{enumerate}
\item $p$ solves the linear discrete heat equation
  $\dot{p}=\laplace{p}$ on each time interval $(t^*_k,t^*_{k+1})$
  because $u\at{t}\in X_k$ implies $p_j\at{t}=u_j\at{t}+\sgn\at{j -
    k}$ and hence $\dot{p}_j\at{t}=\dot{u}_j\at{t}$.
\item 
  At each phase transition time $t^*_k<\infty$, the value of $p_k$
  jumps down according to
  \begin{align*}
    \lim\limits_{t\nearrow t^*_k}p_k\at{t}=+1,\qquad
    \lim\limits_{t\searrow t^*_k}p_k\at{t}=-1
  \end{align*}
  but we have $\lim_{t\nearrow t^*_k}p_j\at{t}=\lim_{t\searrow
    t^*_k}p_j\at{t}$ for all $j\neq{k}$.
\end{enumerate}
The dynamics of $p$ can therefore be interpreted as solving the linear
discrete heat equation for $t\geq0$ where at each phase transition
time $t^*_k$ we perturb the system by adding $-2\delta_j^k$ to the
current state. Combining this interpretation with the
superposition principle we conclude that $p$ consists of a
\emph{regular} part $q$ and a \emph{singular} part $r$, which account
for effects of the initial data and the perturbations induced by the
phase transitions, respectively.  In particular, denoting by $g$ the
discrete heat kernel, that is
\begin{equation}
  \label{eq:ap:heat-eq}
  \begin{aligned}
    \dot g_j(t) &= \laplace g_j(t), 
    \\
    g_j(0) &= \delta_j^0,
  \end{aligned}
\end{equation}
we arrive at the following result, see also Figure \ref{Fig:heat_kernel}.
\begin{corollary}[decomposition of $p$]
  \label{cor:structure-of-p}
  For each single-interface solution we have
  \begin{equation*}
    p_j(t)
    =
    q_j(t) + r_j\at{t},
  \end{equation*}
  with
  \begin{align*}
    r_j\at{t}
    :=
    - 2 \sum_{k \geq k_1} \chi_{\set{t\geq t^*_k}}(t)
    g_{j-k}(t-t^*_k),
  \end{align*}
  where $q$ is the solution of the discrete heat equation with initial
  data $q_j\at{0}=p_j\at{0}$ and $\chi_J$ denotes the indicator
  function of the set $J$.
\end{corollary}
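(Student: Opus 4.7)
\medskip\noindent\textbf{Proof plan.} The plan is to exploit the superposition principle for the linear discrete heat equation by introducing the candidate
\begin{align*}
\tilde p_j(t) := q_j(t) + r_j(t)
\end{align*}
and verifying that it satisfies the same three properties that uniquely characterize $p$ according to the two observations stated just before the corollary: (i) the initial condition $\tilde p_j(0) = p_j(0)$; (ii) the discrete heat equation $\dot{\tilde p} = \laplace \tilde p$ on every open interval $(t^*_{k-1}, t^*_k)$, with $t^*_{k_1-1} := 0$; and (iii) the jump rule $\lim_{t\searrow t^*_k} \tilde p_j(t) - \lim_{t\nearrow t^*_k} \tilde p_j(t) = -2 \delta_j^k$ at each phase transition time.

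Point (i) is immediate since $t^*_k > 0$ for all $k \geq k_1$, so every summand in $r$ vanishes at $t=0$. For (ii), on any open interval $(t^*_{k-1}, t^*_k)$ the indicators $\chi_{\{t \geq t^*_\ell\}}$ are constant, hence $r$ reduces locally to a finite linear combination of shifted kernels $t \mapsto g_{j-\ell}(t-t^*_\ell)$ with $\ell < k$, each of which solves \eqref{eq:ap:heat-eq}; since $q$ solves the discrete heat equation globally, the same holds for $\tilde p$. For (iii), the positive lower bound $t^*_{k+1}-t^*_k \geq \ln\sqrt{(D+2)/(D-2)}$ from Theorem \ref{thm:existence-single-interface} ensures that in a two-sided neighborhood of $t^*_k$ only the $k$th indicator $\chi_{\{t \geq t^*_k\}}$ is discontinuous; since $g_{j-k}(\cdot)$ is continuous at $0$ with $g_{j-k}(0) = \delta_j^k$, that summand contributes exactly a right-sided jump of $-2\delta_j^k$, while all other summands and $q$ remain continuous across $t^*_k$.

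A straightforward induction on $k \geq k_1$ then yields $p = \tilde p$ on $[0,\infty)$: given the identity on $[0, t^*_{k-1}]$ (in the sense of right-limits at the endpoint), both sides satisfy the same discrete heat equation on $(t^*_{k-1}, t^*_k)$ with matching right-limits at $t^*_{k-1}$, and they cross $t^*_k$ with the same jump, so they agree on $[0, t^*_k]$ as well. I expect no substantial obstacle once the structural observations about $p$ have been established; the only mildly delicate bookkeeping is the matching at the jump times, and it works out cleanly because the indicators render $r$ right-continuous, matching the right-continuity of $p$ implied by $\sgn 0 = +1$ together with $u_k(t^*_k)=0$.
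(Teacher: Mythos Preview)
Your proposal is correct and follows essentially the same route as the paper: the paper does not give a formal proof but simply records the two observations you invoke (that $p$ solves the discrete heat equation between phase transitions and jumps by $-2\delta_j^k$ at each $t^*_k$) and then appeals to the superposition principle. Your write-up makes that superposition argument explicit via the candidate $\tilde p=q+r$ and the induction over phase transition times, which is exactly the content left implicit in the paper.
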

\begin{figure}[ht!]%
\centering{%
\includegraphics[width=0.9\textwidth]{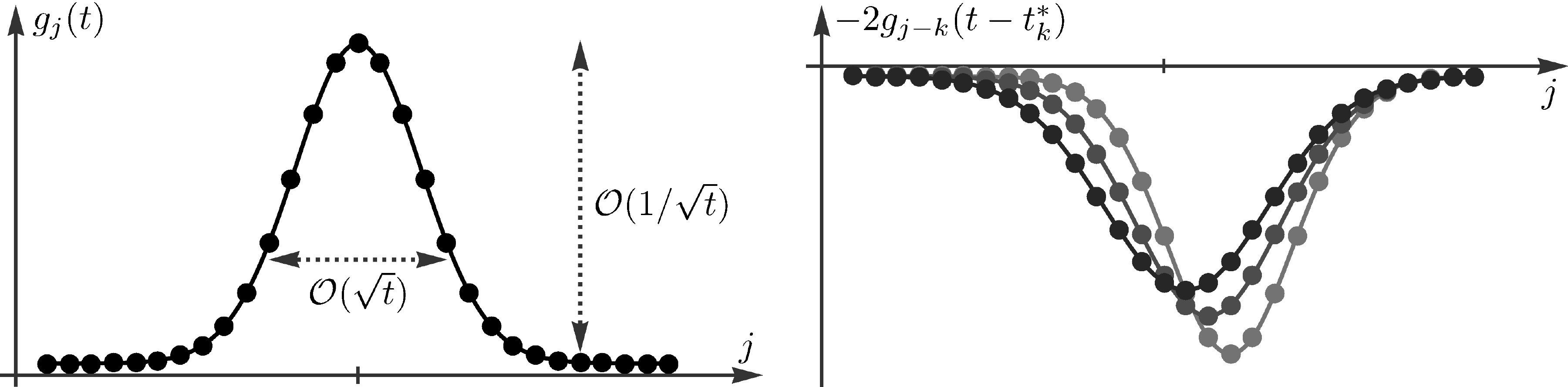}%
}%
\caption{%
 \emph{Left.} Cartoon of the discrete heat kernel $g$ (the thin lines
 represent interpolating splines and are drawn for better illustration).
 \emph{Right.} The lattice function $r$ is at any time $t$ given by a finite 
 sum of shifted and delayed version of the discrete heat kernel.
 }%
\label{Fig:heat_kernel}%
\end{figure}%

The decomposition formula from Corollary \ref{cor:structure-of-p} is
crucial for our analysis. In particular, the decay and continuity
properties of the discrete heat kernel, see appendix
\ref{sec:discrete-heat-kernel}, enable us to estimate the impact of
the sequence of singularities induced by the phase transitions. Notice
however, that the evolution of $p$ is still nonlinear as the phase
transition times $t^*_k$ depend on $p$ itself.
%
%
%
%
\subsection{Upper bounds for the macroscopic interface speed}
\label{sec:interface-speed}
In Theorem \ref{thm:existence-single-interface} we have shown that
single-interface solutions exist.  In order to pass to the macroscopic
limit, we must however guarantee that the corresponding macroscopic
interface speed is at most of order $\DO{1}$. More precisely, for a
given macroscopic distance $\delta>0$ we have to ensure that the
macroscopic time $\eps^2 t^*_{\lfloor\delta/\eps\rfloor}$ is bounded
from below by some constant independent of $\eps$, where
$\lfloor{x}\rfloor$ is shorthand for the integer part of $x$. The
derivation of this lower bound is actually at the heart of our
convergence proof and requires a careful quantitative analysis of the
lattice dynamics.  In this paper we restrict ourselves to the most
simple case and suppose that the initial data are compatible with the
macroscopic limit model.  We also assume without loss of generality
that the macroscopic interface is initially located at $\xi=0$.
\begin{assumption}[macroscopic single-interface initial data]
\label{Ass:InitialData}
The initial data $u\at{0}$ belong to $X_{1}$ and $p\at{0}=\Phi^\prime\at{u\at{0}}$ satisfies
\begin{align*}
\abs{\laplace p_0\at{0}}\leq\beta\eps, \qquad
\sup\limits_{j\in\Zset}\abs{\nabla_+ p_j\at{0}}\leq\alpha\eps,\qquad
\sup\limits_{j\neq 0}\abs{\laplace p_j\at{0}}\leq\alpha\eps^2.
\end{align*}
for two constants $\alpha$ and $\beta$ independent of $\eps$.
\end{assumption}
Assumption \ref{Ass:InitialData} is motivated by the limit dynamics,
see also Figure \ref{Fig:initial_data} for an illustration. In fact,
the prototypical example for macroscopic single-interface data is
\begin{align}
  \label{eq:example-initial-data}
  p_j\at{t} = P_\ini\at{\eps j}  + c_\eps,\qquad
  u_j\at{0} =  p_j\at{t} 
  + \sgn\at{-j}
\end{align}
where $c_\eps$ is a constant and $P_\ini$ some given macroscopic
function independent of $\eps$ such that:
\begin{enumerate}
\item $P_\ini$ is bounded and continuous for all $\xi\in\Rset$ with
  \begin{align*}
    P_\ini\at\xi > 1\quad\text{for}\quad \xi<0
    \qquad\text{and}\quad
    -1<P_\ini\at\xi<1 \quad\text{for}\quad \xi>0
  \end{align*}
  and
  \begin{align*}
    \liminf_{\xi\to-\infty} P_\ini\at{\xi} > 1
    \qquad\text{and}\qquad 
    -1 < \liminf_{\xi\to+\infty} P_\ini\at{\xi} \leq
    \limsup_{\xi\to+\infty} P_\ini\at{\xi} < +1.
  \end{align*}
\item $P_\ini$ is twice continuously differentiable for $\xi<0$ and
  $\xi>0$ with
  \begin{align*}
    \beta := \lim \limits_{\xi\searrow0}
    \abs{P_\ini^\prime\at{+\xi}- P_\ini^\prime\at{-\xi}}<\infty,
    \qquad
    \alpha := \sup\limits_{\xi\neq0}
    \abs{P_\ini^{\prime}\at\xi} + \abs{P_\ini^{\prime\prime}\at\xi}<\infty,
  \end{align*}
\item $c_\eps>0$ is sufficiently small with $c_\eps\to0$ as $\eps\to0$
  and chosen such that $t^*_1>0$, that means $u_j\at{0}> 0$ for all
  $j\leq 0$ and $u_j\at{0}<0$ for all $j\geq 1$.
\end{enumerate}

\begin{corollary}[bounds for the regular part $q$]
  \label{cor:decay-regular-velocities}
  We have
  \begin{align*}
    \abs{\nabla_+ q_j\at{t}}\leq \al\eps \qquad\text{and}\qquad
    \nabs{\dot{q}_j\at{t}}=\abs{\laplace q_j\at{t}}\leq
    \al\eps^2+\beta \eps g_0\at{t}
  \end{align*}
  for all $t\geq0$, $j\in\Zset$, and $\eps>0$.
\end{corollary}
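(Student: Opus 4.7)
The plan is to exploit two facts: that $q$ solves the linear discrete heat equation, and that discrete differences commute with $\laplace$, so that both $\nabla_+q$ and $\laplace q$ are themselves solutions of the heat equation with prescribed initial data controlled by Assumption~\ref{Ass:InitialData}. Everything then reduces to evaluating explicit convolutions against the discrete heat kernel $g$ and invoking its basic $\ell^1$ and unimodality properties from Appendix~\ref{sec:discrete-heat-kernel}.

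For the first bound, I would observe that $v_j\at{t}:=\nabla_+q_j\at{t}$ satisfies $\dot v_j=\laplace v_j$ with $v_j\at0=\nabla_+p_j\at0$. Writing the solution as a convolution with the heat kernel,
\begin{align*}
\nabla_+q_j\at{t}=\sum_{k\in\Zset}g_{j-k}\at{t}\,\nabla_+p_k\at0,
\end{align*}
and using $g_{j-k}\at{t}\geq0$ together with $\sum_{k}g_{j-k}\at{t}=1$, the bound $\sup_k|\nabla_+p_k\at0|\leq\al\eps$ from Assumption~\ref{Ass:InitialData} yields $|\nabla_+q_j\at{t}|\leq\al\eps$ at once.

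For the second bound, the same principle applied to $\laplace q$ gives
\begin{align*}
\laplace q_j\at{t}=\sum_{k\in\Zset}g_{j-k}\at{t}\,\laplace p_k\at0.
\end{align*}
The decisive step is to split the initial datum into a regular piece and a point singularity at $k=0$: write $\laplace p_k\at0=\laplace p_k\at0\,\chi_{\{k\neq0\}}+\laplace p_0\at0\,\delta_k^0$. Using $\sup_{k\neq0}|\laplace p_k\at0|\leq\al\eps^2$ and again $\sum_k g_{j-k}\at{t}=1$, the regular part contributes at most $\al\eps^2$. The singular part contributes $g_j\at{t}\,\laplace p_0\at0$, which is bounded by $\be\eps\,g_j\at{t}\leq\be\eps\,g_0\at{t}$ once we invoke the unimodality property $g_j\at t\leq g_0\at t$ of the discrete heat kernel. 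Adding the two contributions gives the claim, and $\dot q_j=\laplace q_j$ is immediate from the heat equation.

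The only non-routine input is the maximum property $g_j\at t\leq g_0\at t$, which I expect is recorded in Appendix~\ref{sec:discrete-heat-kernel} (it follows from the Fourier representation of $g$). If that bound is not available in the sharp form $g_0\at t$, the same argument still goes through with an explicit uniform-in-$j$ upper envelope for $g_j\at t$; but the clean statement of the corollary suggests that the appendix provides exactly the required unimodality, so this should not pose a real obstacle.
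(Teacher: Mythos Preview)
Your proposal is correct and matches the paper's own argument essentially step for step: the paper likewise observes that $\nabla_+q$ and $\dot q=\laplace q$ solve the discrete heat equation with initial data bounded by $\alpha\eps$ and $\alpha\eps^2+\beta\eps\,\delta_j^0$ respectively, and then invokes the superposition and maximum principles---which amounts exactly to your convolution representation together with $g\geq0$, $\sum_k g_k=1$, and the unimodality $g_j\at t\leq g_0\at t$ (the latter is indeed recorded in Lemma~\ref{lem:discr-heat-kernel}). Your exposition is simply a more explicit unpacking of the same mechanism.
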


\begin{proof}
  By construction and Assumption \ref{Ass:InitialData} we have
  \begin{align*}
    \tfrac{\dint}{\dint{t}}\dot{q}_j\at{t}
    = \laplace \dot{q}_j\at{t},
    \qquad
    \nabs{\dot{q}_j\at{0}} = \abs{\dot{p}_j\at{0}} = \abs{\dot{u}_j\at{0}}
    = \abs{\laplace{p}_j\at{0}} \leq \alpha\eps^2+\beta\eps\delta_j^{0}
  \end{align*} 
  as well as 
  \begin{align*}
    \tfrac{\dint}{\dint{t}}\nabla_+{q}_j\at{t}
    = \laplace \nabla_+{q}_j\at{t},
    \qquad  
    \abs{\nabla_+{q}_j\at{0}} = \abs{\nabla_+{p}_j\at{0}} \leq \alpha \eps.
\end{align*} 
The claim now follows using both the superposition and the
maximum principle for the discrete 
heat equation.
\end{proof}
We remark that the assertions of Corollary \ref{cor:decay-regular-velocities} are sufficient for
showing that the macroscopic interface speed is bounded. All results derived below therefore 
hold (with different constants) even in the case that
\begin{enumerate}
\item 
$\abs{\nabla_+{p}_j\at{0}}\leq \alpha \eps$ and $\abs{\laplace p_j\at{0}}\leq \beta\eps$ for all $j\in\Zset$, 
\item 
$\abs{\laplace p_j\at{0}}\leq \alpha\eps^2$ for almost all $j\in\Zset$,
\end{enumerate}
that means the derivative of the function $P_\ini$ from
\eqref{eq:example-initial-data} can even be discontinuous at finitely
many points.
\begin{figure}[ht!]%
\centering{%
\includegraphics[width=0.9\textwidth]{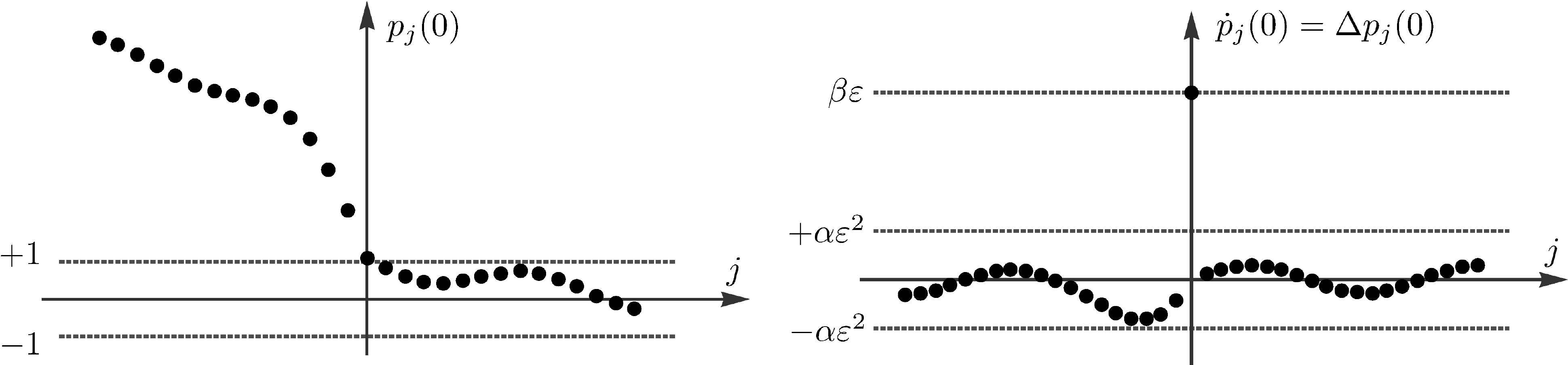}%
}%
\caption{%
  For initial data as in Assumption \ref{Ass:InitialData}, the
  discrete data $p_j\at{0}$ resemble a macroscopic function $P_\ini$
  that is continuous and piecewise twice continuously
  differentiable. In particular, the initial velocities
  $\dot{u}_j\at{0}=\dot{p}_j\at{0}$ satisfy
  $\dot{p}_0\at{0}=\beta\eps$ and $\abs{\dot{p}_j\at{0}}\leq\al\eps^2$
  for $j\neq0$.  }%
\label{Fig:initial_data}%
\end{figure}%
\bigpar
We are now able to derive upper bounds for
the macroscopic interface speed. To this end we prove that Assumption \ref{Ass:InitialData}
implies that the time $t^*_{k+1}-t^*_k$ between two adjacent phase transitions is mesoscopic as it can be bounded from below by $1/\eps$ (recall that Theorem \ref{thm:existence-single-interface} provides only $t^*_{k+1}-t^*_k\geq 2/\at{D-2}$). Our considerations are based on the estimate
\begin{align}
  \label{eq:neighbour-travel-time}
  2
  \leq p_k\at{t^*_{k+1}}-p_{k}\at{t^*_k}=
  \int_{t^*_k}^{t^*_{k+1}} \dot p_k(t) \dint{t}
  =
  \int_{t^*_k}^{t^*_{k+1}} \dot q_k(t) \dint{t}
  - 2 \sum_{n=1}^k \int_{t^*_k}^{t^*_{k+1}} \dot g_{k-n}(t-t^*_n)\dint{t},
\end{align}
which follows from combining the conditions $u_k\at{t^*_k}=0$ and $u_{k}\nat{t^*_{k+1}}\geq 2$  
with the representation formula from Corollary~\ref{cor:structure-of-p}.
\begin{lemma}[refined lower bound for the time between two phase
  transitions]
  \label{lem:estimate-m_k}
  For any $\tau_\fin>0$ there is a constant $d_*>0$, which depends on
  $\alpha$, $\beta$, and $\tau_\fin$, along with a constant
  $0<\eps_*<1$, which depends only on $\alpha$ and $\beta$, such that
\begin{equation*}
    \eps \at{t^*_{k+1}-t^*_k}
    \geq
    2d_*
\end{equation*}
holds for all $k\geq 1$ with $0 \leq t^*_k \leq \tau_\fin/\eps^2$ provided that 
$\eps\leq\eps_*$.
\end{lemma}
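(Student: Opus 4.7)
The plan is to argue by contradiction, supposing $\eps s_k<2d_*$ with $s_k:=t^*_{k+1}-t^*_k$ and using the key estimate \eqref{eq:neighbour-travel-time} as the starting point. Everything reduces to a careful upper bound on the right-hand side of \eqref{eq:neighbour-travel-time} in terms of $\eps s_k$, which when combined with the lower bound $2$ on the left-hand side forces $\eps s_k$ to be at least some positive $d_*$.

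For the regular contribution I would invoke Corollary \ref{cor:decay-regular-velocities} to obtain
\begin{align*}
  \int_{t^*_k}^{t^*_{k+1}}\dot q_k(t)\,\mathrm{d}t
  \;\leq\;
  \alpha\eps^2 s_k+\beta\eps\int_{t^*_k}^{t^*_{k+1}} g_0(t)\,\mathrm{d}t,
\end{align*}
and then split into the regimes $t^*_k\leq 1$ and $t^*_k\geq 1$. Using the integrated discrete heat kernel estimates from Appendix \ref{sec:discrete-heat-kernel} (namely $\int_0^T g_0\leq C(1{+}\sqrt{T})$ and the finer $\int_a^b g_0\leq C(b{-}a)/\sqrt{a}$ for $a\geq 1$), together with the contradiction hypothesis $s_k<2d_*/\eps$ and the standing assumption $t^*_k\leq\tau_\fin/\eps^2$, this yields a bound of the form $C_1\sqrt{\eps}$, where $C_1$ depends on $\alpha$, $\beta$, and $\tau_\fin$.

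For the singular sum I would perform the time integration exactly, obtaining
\begin{align*}
  -2\sum_{n=1}^{k}\int_{t^*_k}^{t^*_{k+1}}\dot g_{k-n}(t-t^*_n)\,\mathrm{d}t
  \;=\;
  2\bigl(1-g_0(s_k)\bigr)+2\,S_k,
\end{align*}
with $S_k=\sum_{n=1}^{k-1}\bigl[g_{k-n}(\sigma_n)-g_{k-n}(\sigma_n+s_k)\bigr]$ and $\sigma_n=t^*_k-t^*_n$. The $n=k$ term is strictly less than $2$; for $S_k$ I would use the uniform pointwise estimate $|\dot g_m(t)|\leq C(1+t)^{-3/2}$ (Appendix \ref{sec:discrete-heat-kernel}) together with the spacing bound $\sigma_n\geq(k-n)\ln\sqrt{(D+2)/(D-2)}$ from Theorem \ref{thm:existence-single-interface}, to control each summand by $C\min\bigl(s_k/\sigma_n^{3/2},\,1/\sqrt{\sigma_n}\bigr)$ and sum via standard tests.

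Inserting these bounds into \eqref{eq:neighbour-travel-time}, rearranging, and exploiting the lower bound $g_0(s)\geq c/\sqrt{1+s}$ from Appendix \ref{sec:discrete-heat-kernel}, one arrives at an inequality of the form $c/\sqrt{1+s_k}\leq C_1\sqrt{\eps}+C_2(\eps s_k)$, which can only hold if $s_k\geq 2d_*/\eps$ for appropriate $d_*=d_*(\alpha,\beta,\tau_\fin)$ and all $\eps\leq\eps_*(\alpha,\beta)$. The main obstacle is sharpness of the estimate on $S_k$: the naive summation gives at best $|S_k|\leq C\sqrt{s_k}$, which is not quite strong enough, so one needs to exploit that $S_k$ actually has a favorable sign in the relevant regime. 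More precisely, the past density $\rho_k(t)=\sum_{n<k}g_{k-n}(t-t^*_n)$ satisfies the discrete heat equation on $(t^*_{k-1},\infty)$, and at $j=k$ sits on the right-hand decaying tail of the superposition of Gaussians centered at $n=1,\ldots,k-1$, where the discrete Laplacian is nonnegative; consequently $\rho_k$ is nondecreasing in $t$ on this interval, which translates to $S_k\leq 0$ and closes the argument.
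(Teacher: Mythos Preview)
Your sign argument for $S_k$ does not hold. You claim that $\rho_k(t)=\sum_{n<k}g_{k-n}(t-t^*_n)$ is nondecreasing on $(t^*_{k-1},\infty)$ because ``at $j=k$ the discrete Laplacian is nonnegative''; but the nearest heat kernel in the sum is centred at $n=k-1$, so at $j=k$ you are only one lattice site from that centre, whereas the width of this kernel at time $t-t^*_{k-1}\geq s_{k-1}$ is of order $\sqrt{s_{k-1}}$, which is large. You are therefore near the peak of that Gaussian, where the spatial Laplacian is \emph{negative}, not on a convex tail. Concretely, $\dot g_1(s)<0$ once $s$ exceeds a fixed constant (the peak of $g_1$), so for the regime $s_{k-1}\gg1$ that is relevant here the $n=k-1$ term of $\dot\rho_k$ is negative, and the same applies to all other terms. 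Hence $\rho_k$ is in fact \emph{decreasing}, $S_k>0$, and you are left with precisely the ``naive'' bound $|S_k|\leq C\sqrt{s_k}$ that you yourself flag as insufficient.

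The paper avoids this difficulty by a different mechanism. It first uses the pointwise domination $|\dot g_j|\leq -\dot g_0$ from Lemma~\ref{lem:discr-heat-kernel} to replace every $g_{k-n}$ in the singular sum by $g_0$, yielding the inequality
\[
\sum_{n=1}^{k} g_0(s_k+\cdots+s_n)\;\leq\;\tfrac12\!\int_{t^*_k}^{t^*_k+s_k}\!(\alpha\eps^2+\beta\eps g_0)\,\mathrm{d}t\;+\;\sum_{n=1}^{k-1} g_0(s_{k-1}+\cdots+s_n).
\]
Then, instead of estimating the two sums separately, it restricts attention to the case $s_k<\min(s_1,\ldots,s_{k-1})$, in which the strict monotonicity of $g_0$ gives the termwise comparison $g_0(s_k+\cdots+s_{n+1})>g_0(s_{k-1}+\cdots+s_n)$, so the sums telescope and only the single term $g_0(s_k+t^*_k-t^*_1)\geq g_0(s_k+t^*_k)$ survives on the left. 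A convexity/Newton argument for the resulting scalar inequality then yields $\eps s_k\geq d_2$, and the general case follows by induction since either $s_k$ already exceeds a previous $s_n$ or the special-case bound applies.
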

\begin{proof}
  In what follows we consider $k\geq1$ with $t^*_k<\tau_\fin/\eps^2$
  and denote by $c$ and $C$ generic constants independent of $\al$,
  $\beta$, and $\eps$.
\par
\underline{\emph{Weaker variant of \eqref{eq:neighbour-travel-time}}}:
In order to study the implications of
\eqref{eq:neighbour-travel-time}, we first simplify the right hand
side as follows. By Lemma \ref{lem:discr-heat-kernel} we have
\begin{align*}
- \sum_{n=1}^k \int_{t^*_k}^{t^*_{k+1}} \dot g_{k-n}(t-t^*_n)\dint{t}
&\leq
-\sum_{n=1}^k\int_{t^*_k}^{t^*_{k+1}}\dot g_{0}(t-t^*_n) \dint{t}
=\sum_{n=1}^k g_0\at{t^*_k-t^*_n}-\sum_{n=1}^k g_0\at{t^*_{k+1}-t^*_n}.
\end{align*}
whereas Corollary \ref{cor:decay-regular-velocities} provides
\begin{align*}
  \int_{t^*_k}^{t^*_{k+1}} \dot q_{k}(t)\dint{t}
  \leq
  \int_{t^*_k}^{t^*_{k+1}}\bat{\alpha\eps^2+\beta\eps
    g_0\at{t}}\dint{t}
\end{align*}
Rearranging terms and using $g_0(0) = 1$, inequality
\eqref{eq:neighbour-travel-time} becomes
\begin{equation*}
  \sum_{n=1}^k g_0(t^*_{k+1}-t^*_n)
  \leq
  \frac{1}{2}
  \int_{t^*_k}^{t^*_{k+1}} \at{\alpha\eps^2 + \beta\eps g_0(t)} \dint{t}
  +
  \sum_{n=1}^{k-1} g_0(t^*_k-t^*_n),
\end{equation*}
and writing $s_{k} := t^*_{k+1} - t^*_k$ we arrive at
\begin{equation}
  \label{eq:trans-est-m}
  \sum_{n=1}^{k} g_0(s_{k} + \cdots + s_n)
  \leq
  \frac{1}{2}
  \int_{t^*_k}^{t^*_{k}+s_k}
  \at{\alpha\eps^2 + \beta\eps g_0(t) } \dint{t}
  +
  \sum_{n=1}^{k-1} g_0(s_{k-1} + \cdots + s_n).
\end{equation}
In the remainder of this proof we transform this inequality into a
lower bound for $s_k$.
\par
\underline{\emph{Estimate for $k=1$}}:
Inequality \eqref{eq:trans-est-m} combined with Lemma \ref{lem:discr-heat-kernel-monotonicty} provides
\begin{equation*}
    g_0(s_1)
    \leq
    \tfrac{1}{2}\int_{t^*_1}^{t^*_1+s_1}
    \at{ \alpha\eps^2 + \beta\eps g_0(t) } \dint{t}
    \leq
    \tfrac{1}{2}\int_{0}^{s_1} \at{ \alpha\eps^2 + \beta\eps g_0(t) } \dint{t},
\end{equation*}
and we deduce that there exists $\eps_*>0$, which depends on $\alpha$
and $\beta$, such that $s_1\geq 1$ for all $\eps\leq \eps_*$.  We
employ Lemma \ref{lem:discr-heat-kernel} again to estimate
\begin{equation*}
    \frac{c}{\sqrt{s_1}}\leq {C}\bat{\alpha\eps^2 + \beta\eps\sqrt{s_1}},
 \end{equation*}
and this gives
\begin{equation*}
  \eps s_1 \geq d_1 > 0
\end{equation*}
for some constant $d_1$ which depends on $\alpha$ and $\beta$ but not
on $\eps$. For $\beta=0$ we even find $\eps^{4/3} s_1\geq d_1$.
\par
\underline{\emph{Estimate for $k>1$ in a special case}}:  
For the following considerations we further suppose that
\begin{align*}    
    s_k < \min\limits_{n=1,\ldots,k-1} s_n.
\end{align*}
Since $g_0$ is strictly decreasing, we therefore get
\begin{equation*}
    g_0(s_{k} + \cdots + s_{n+1})
    >
    g_0(s_{k-1} + \cdots + s_{n})
\end{equation*}
for all $n = 1,\ldots,k-1$, and hence
\begin{equation*}
  g_0(s_k + \cdots + s_1)
    <
    \frac{1}{2}
    \int_{t^*_k}^{t^*_{k}+s_k}
    \at{\alpha\eps^2 + \beta\eps g_0(t)} \dint{t},
 \end{equation*}
 thanks to \eqref{eq:trans-est-m}. This implies $f_k\at{s_k}<0$ for
\begin{equation*}
   f_{k}\at{s}:=
    g_0(s+t^*_{k}) - \frac{1}{2}
    \int_{t^*_k}^{t^*_k+s} \at{\alpha\eps^2 + \beta\eps g_0(t)} \dint{t},
\end{equation*}
where we used that $g_0(s_k + \cdots + s_1)=g_0(s_k + t^*_k-t^*_1)\geq
g_0(s_k + t^*_k)$. The properties of $g_0$, see Lemma
\ref{lem:discr-heat-kernel-monotonicty} once more, guarantee that the
function $f_k$ is convex, continuous, and strictly decreasing in
$s$. Therefore, and in view of $f_k\at{0}>0$ and $\lim_{s\to\infty}
f_k\at{s}=-\infty$, we conclude that $f_k$ has a unique zero
$\bar{s}_k$ with $\bar{s}_k<s_k$. Due to convexity of $f_k$ we also
find
\begin{align*}
  \hat{s}_k<\bar{s}_k<s_k,
\end{align*}
where $\hat{s}_k:=- f_k\at{0}/f_k^\prime\at{0}$ is the first
approximation to $\bar{s}_k$ when starting the Newton algorithm at
$s=0$. By direct computations we now verify
\begin{equation*}
    \hat{s}_k
    =
    \frac{g_0\at{t^*_k}}{- \dot g_0\at{t^*_k}
      + \frac{\alpha\eps^2}{2} + \frac{\beta\eps}{2} g_0(t^*_k)},
\end{equation*}
and using $t^*_k \geq s_1 \geq 1$ as well as Lemma
\ref{lem:discr-heat-kernel} we obtain
\begin{align*}
    \hat{s}_k
        \geq
    \frac{c \at{t^*_k}^{-1/2}}
    {C \at{t^*_k}^{-3/2} + \alpha\eps^2
      + {C} \beta\eps \at{t^*_k}^{-1/2}}
    \geq
    \frac{c}{\at{t^*_k}^{-1}
      + \alpha\eps^2 \at{t^*_k}^{1/2}
      + \beta\eps},
\end{align*}
and hence
\begin{align*}
  \eps s_k\geq \frac{c }{d_1^{-1}
      + \alpha \tau_\fin^{1/2}
      + \beta} =: d_2
\end{align*}
where we used that $t^*_k\geq s_1\geq d_1/\eps$ and 
$t^*_k\leq \tau_\fin/\eps^2$. 
\par
\underline{\emph{Estimate for $k>1$ in the general case}}:  
We have established the estimate $\eps s_1\geq d_1$ as well as the
implication
\begin{align*}
  \eps s_k \leq \min\{\eps s_1,\ldots,\eps s_{k-1}\}\quad\implies
  \quad \eps s_k\geq d_2,
\end{align*}
and the desired estimate for $s_k$ follows with
$d_*:=\tfrac12\min\{d_1, d_2\}$ by induction.
\end{proof}
From Lemma~\ref{lem:estimate-m_k} we immediately obtain 
$t^*_k\geq 2kd_*/\eps$ and we deduce for each
macroscopic time $\tau_\fin$ that at most $\tau_\fin/\at{2\eps d_*}$ phase transitions 
can happen for $\tau\leq\tau_\fin$, shifting the interface over a macroscopic distance smaller than
$\tau_\fin/\at{2 d_*}$.
\par
We conclude this section with some comments concerning the
microscopic fluctuations caused by the phase transition of $u_k$ at
$t^*_k$.  The properties of the discrete heat kernel imply that the
amplitude as well the inverse length of the effective spatial support
scale with
\begin{align*}
\frac{\eps}{\sqrt{\eps^2+\at{\tau-\eps^2t^*_k}}},
\end{align*}
which decays quite rapidly within a macroscopic time of order
$\eps^2$ but much slower afterwards.  In particular, when
$u_{k+1}$ undergoes the next phase transition at time $t^*_{k+1}$, the
fluctuations evoked by $u_k$ have reached an amplitude of order
$\eps^{1/2}$ and spread over a macroscopic length of order
$\eps^{-1/2}$. Similarly, the amplitude of the velocity
fluctuations at time $t^*_{k+1}$ is of order $\eps^{3/2}$.
\par
These scaling arguments, especially the fractional powers of $\eps$,
reveal that \emph{macroscopic} single-interface data are \emph{not}
invariant under the dynamics. In other words, the lattice data at
times $t \lessapprox t^*_k$ -- this means just before the phase
transitions -- do \emph{not} satisfy Assumption \ref{Ass:InitialData},
and we conclude that interface propagation in discrete
forward-backward diffusion equations is not only a two-scale but a
genuine multi-scale problem.

%
%
%
%
\subsection{Macroscopic continuity and compactness results}
\label{sec:macr-cont-comp}
In order to pass to the macroscopic limit $\eps\to0$, we regard the
discrete data $p_j\at{t}$, $q_j\at{t}$, and $r_j\at{t}$ as piecewise
constant functions with respect to the macroscopic coordinates
$(\tau,\xi)$.  More precisely, we set
\begin{align*}
P_\eps\bpair{\tau}{\xi+\zeta}:= p_{\xi/\eps}\at{\tau/\eps^2}\qquad\text{for all}\quad
\tau\geq0, \quad \xi\in\eps\Zset, \quad\zeta\in[-\eps/2,+\eps/2)
\end{align*}
and define $Q_\eps$ and $R_\eps$ by analogous formulas. We further
introduce the macroscopic interface position
\begin{align*}
\xi^*_\eps\at{\tau}:=\eps \sum_{k=1}^\infty {k} \chi_{[t^*_k,t^*_{k+1})}\at{\tau/\eps^2}\qquad
\text{for all}\quad\tau\geq0,
\end{align*}
as a piecewise constant function in time that jumps at the macroscopic
phase transitions times defined by
\begin{align*}
\tau^*_{k,\,\eps}:=\eps^2 t^*_{k}\qquad\text{for all}\quad k\in\Zset.
\end{align*}
In what follows we fix a macroscopic final time $0<\tau_\fin<\infty$
and wish to pass to the limit $\eps \to 0$ on the macroscopic
time-space domain
\begin{align*}
\Om:=I\times\Rset,\qquad  I:=[0,\tau_\fin].
\end{align*} 
To this end, we recall that Lemma \ref{lem:estimate-m_k} provides
constants $d_*>0$ and $0<\eps_*\leq1$ such that
\begin{align}
  \label{eq:props-keps}
  K_\eps \leq \frac{\tau_\fin}{2d_*\eps},
  \qquad
  \inf\limits_{k=1,\ldots,K_\eps}
  \tau^*_{k+1,\,\eps}-\tau^*_{k,\,\eps} \geq 2 d_*\eps
\end{align}
holds for all $0<\eps\leq\eps_*$, where 
\begin{equation*}
  K_\eps:=\max\Big\{k\in\Zset\;:\; \tau^*_{k,\,\eps} <\tau_\fin \Big\}
\end{equation*}
is the number phase transitions taking place in the microscopic time
interval $[0,\tau_\fin/\eps^2]$.  Notice that $d_*$, $\eps_*$, and all
constants derived below depend on $\tau_\fin$ and on the initial data
via $\alpha$ and $\beta$.
\bigpar
Our first result in this section concerns the compactness of the
discrete interface curves $\xi^*_\eps$.
\begin{lemma}[compactness of interface curves]
  \label{lem:compactness-xi}
  The family $\at{\xi_*}_{0<\eps\leq \eps_*}$ is compact in
  $\fspaceL^\infty\at{I}$ and each limit curve is Lipschitz
  continuous.
\end{lemma}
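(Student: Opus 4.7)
The strategy is to combine the near-Lipschitz bound inherited from Lemma~\ref{lem:estimate-m_k} with the monotonicity of $\xi^*_\eps$ and to invoke a Helly-type selection argument. The bound \eqref{eq:props-keps} tells us two things at once: first, the range of $\xi^*_\eps$ on $I$ is contained in the fixed interval $\big[0,\tau_\fin/(2d_*)\big]$, and second, any two consecutive jump times of $\xi^*_\eps$ are separated in macroscopic time by at least $2d_*\eps$, while each jump has height exactly $\eps$.

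From this I would derive the explicit estimate
\begin{equation*}
  0 \leq \xi^*_\eps(\tau_2)-\xi^*_\eps(\tau_1)
  \leq
  \frac{\tau_2-\tau_1}{2d_*} + \eps
  \qquad\text{for all } 0\leq\tau_1<\tau_2\leq\tau_\fin,
\end{equation*}
by counting how many of the transition times $\tau^*_{k,\eps}$ can fall inside $[\tau_1,\tau_2]$. This is an \emph{approximate Lipschitz estimate} with defect $\eps$, and together with monotonicity it is the whole input needed.

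Now I would apply Helly's selection theorem: the family $(\xi^*_\eps)_{0<\eps\leq\eps_*}$ consists of nondecreasing functions uniformly bounded on $I$, so along a subsequence $\eps_n\to 0$ there is pointwise convergence $\xi^*_{\eps_n}(\tau)\to\xi^*(\tau)$ for every $\tau\in I$ to some nondecreasing limit $\xi^*$. The displayed estimate above passes to the pointwise limit, yielding
\begin{equation*}
  0 \leq \xi^*(\tau_2)-\xi^*(\tau_1) \leq \frac{\tau_2-\tau_1}{2d_*},
\end{equation*}
so $\xi^*$ is Lipschitz continuous with constant $1/(2d_*)$.

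The step I expect to require the most care is the upgrade from pointwise to $\fspaceL^\infty(I)$ convergence. For this I would use the classical fact that a sequence of nondecreasing functions on a compact interval that converges pointwise to a \emph{continuous} function converges uniformly (a Dini-type argument: cover the compact range of $\xi^*$ by finitely many points, compare values at a fine partition of $I$, and exploit monotonicity to sandwich $\xi^*_{\eps_n}(\tau)$ between $\xi^*_{\eps_n}$ evaluated at the nearest partition points). Since the limit $\xi^*$ is Lipschitz by the previous step, this argument applies and gives uniform convergence on $I$, which completes the proof of compactness in $\fspaceL^\infty(I)$ together with the Lipschitz regularity of any limit.
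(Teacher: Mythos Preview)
Your argument is correct. The approximate Lipschitz estimate, Helly's selection theorem, and the P\'olya--Dini lemma for monotone sequences converging to a continuous limit combine exactly as you describe.

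The paper takes a different but equally short route: instead of working directly with the step functions $\xi^*_\eps$, it introduces the piecewise \emph{linear} interpolants $\bar\xi^*_\eps$ through the nodes $\bat{\tau^*_{k,\eps},\,\xi^*_\eps(\tau^*_{k,\eps})}$, checks $\babs{\bar\xi^*_\eps-\xi^*_\eps}\leq\eps$ and $0\leq\tdiff{\tau}\bar\xi^*_\eps\leq 1/(2d_*)$, and then invokes Arzel\`a--Ascoli on the uniformly Lipschitz family $(\bar\xi^*_\eps)$. This construction makes the Lipschitz bound visible as a derivative estimate on a genuine $\fspaceC^{0,1}$ object, so compactness and regularity of the limit come in one stroke without the separate Helly and Dini steps. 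Your approach, by contrast, exploits monotonicity more directly and avoids building an auxiliary interpolant; it would also generalize more readily if the jump sizes were not constant. Both arguments are standard and of comparable length.
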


\begin{proof}
  We define a piecewise linear function $\bar{\xi}^*_\eps$ on
  $[0,\tau_\fin]$ by the condition
  \begin{align*}
    \bar{\xi}^*_\eps\at{\tau^*_{k,\,\eps}} =
    \xi^*_\eps\at{\tau^*_{k,\,\eps}}
  \end{align*}
  for all $ k=1,\ldots,K_\eps$ and $\bar{\xi}^*_\eps\at{\tau_\fin} =
  \xi^*_\eps\nat{\tau^*_{K_\eps,\,\eps}}$.  We readily check that
  \begin{align*}
    \babs{\bar{\xi}^*_\eps\at{\tau}-\xi^*_\eps\at{\tau}}\leq
    \eps,\qquad 0\leq \tdiff{\tau} \bar{\xi}^*_\eps\at{\tau}\leq
    \frac{\eps}{2d_*\eps}
  \end{align*}
  for almost all $\tau\in[0,\tau_\fin]$, and conclude that the family
  $\at{\bar{\xi}^*_\eps}_{0<\eps\leq\eps_*}$ is bounded in
  $\fspaceC^{0,1}(I)$. All claims hence follow from standard results
  in real analysis.
\end{proof}
Our main goal this section is to derive compactness result for
$R_\eps$ and $Q_\eps$ that imply $\at{i}$ the existence of pointwise
convergent subsequences, and $\at{ii}$ the continuity of any limit
function. In a preparatory step we next derive an auxiliary result for
piecewise constant functions $F_\eps$ on $\Om$ revealing that for
$\fspaceL^\infty$-compactness it is sufficient to establish uniform
H\"older estimates with respect to $\tau\in{I}$ and discrete
$\xi\in\eps\Zset$. Here piecewise constant means, as above, continuous
with respect to $\tau$ but spatially constant in each interval
$\xi\in[\eps j-\eps/2,\, \eps j +\eps/2)$, $j\in\Zset$.  Our auxiliary
result and its proof are straight forward and can very likely be found
somewhere in the literature on numerical analysis (though we are not
aware of any reference).
\begin{lemma}[compactness criterion for piecewise constant functions]
\label{lem:compactness-aux-res}
Let $\at{F_\eps}_{0<\eps\leq\eps_*}$ be a family of bounded and 
piecewise constant functions on $\Om$, and suppose that there
exist constants $\ga_1,\ga_2\in (0,1]$ and $C>0$ 
such that
\begin{align*}
\babs{F_\eps\pair{\tau_2}{\xi_2}-F_\eps\pair{\tau_1}{\xi_1}}\leq {C}\bat{\babs{\tau_2-\tau_1}^{\ga_1}+\babs{\xi_2-\xi_1}^{\ga_2}}
\end{align*}
holds for all $0<\eps\leq\eps_*$, every $\tau_1, \tau_2\in{I}$, and all $\xi_1,\xi_2\in\eps\Zset$. 
Then this family is compact in $\fspaceL^\infty\at\Om$ and any limit function is locally H\"older continuous
with exponent $\min\{\ga_1,\ga_2\}$.
\end{lemma}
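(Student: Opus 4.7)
The plan is to reduce the claim to a standard Arzel\`a--Ascoli argument by passing from the step functions $F_\eps$ to their piecewise linear interpolants. Since $\Omega=I\times\Rset$ is unbounded while the hypotheses give only uniform boundedness and uniform H\"older control, the natural conclusion is compactness in $\fspaceL^\infty_\loc(\Omega)$, i.e.\ local uniform convergence along a subsequence; the argument below establishes this and shows that the limit extends to a globally H\"older continuous function on $\Omega$.

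First, I would introduce the continuous interpolant $\tilde F_\eps\colon\Omega\to\Rset$ that coincides with $F_\eps$ on the grid $I\times\eps\Zset$ and is affine in $\xi$ on each strip $I\times[\eps j,\eps(j+1)]$. A direct estimate using the hypothesis with $\tau_1=\tau_2$ and $\xi_1,\xi_2\in\eps\Zset$ adjacent shows
\begin{equation*}
  \bigl|\tilde F_\eps(\tau,\xi)-F_\eps(\tau,\xi)\bigr|
  \le
  \max_{j\in\Zset}\bigl|F_\eps(\tau,\eps(j+1))-F_\eps(\tau,\eps j)\bigr|
  \le
  C\eps^{\ga_2},
\end{equation*}
uniformly in $(\tau,\xi)\in\Omega$. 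Hence $\tilde F_\eps-F_\eps\to0$ in $\fspaceL^\infty(\Omega)$ as $\eps\to0$, so it suffices to establish the desired compactness for $\tilde F_\eps$.

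Next I would verify that $\tilde F_\eps$ inherits a H\"older estimate on all of $\Omega$, uniformly in $\eps\le\eps_*$. Fix $(\tau_i,\xi_i)\in\Omega$ for $i=1,2$; writing $\xi_i=\eps j_i+\lambda_i\eps$ with $\lambda_i\in[0,1)$ and using the affine structure together with the hypothesis on grid points yields
\begin{equation*}
  \bigl|\tilde F_\eps(\tau_2,\xi_2)-\tilde F_\eps(\tau_1,\xi_1)\bigr|
  \le
  C'\bigl(|\tau_2-\tau_1|^{\ga_1}+|\xi_2-\xi_1|^{\ga_2}\bigr),
\end{equation*}
where $C'$ depends only on $C$, $\ga_1$, and $\ga_2$ (the bound $(\lambda\,\eps+|\xi_2-\xi_1|)^{\ga_2}\le|\xi_2-\xi_1|^{\ga_2}+\eps^{\ga_2}$ is absorbed into the error $C\eps^{\ga_2}$ already controlled in the previous step, so it can be handled by refining the splitting or by estimating convex combinations directly on $\tilde F_\eps$). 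Together with the uniform $\fspaceL^\infty$-bound inherited from $F_\eps$, this makes the family $\{\tilde F_\eps\}$ uniformly bounded and equicontinuous on every compact rectangle $I\times[-R,R]$.

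The Arzel\`a--Ascoli theorem then delivers, for each $R\in\Nset$, a uniformly convergent subsequence on $I\times[-R,R]$; a standard diagonal extraction produces a single subsequence $\tilde F_{\eps_n}$ that converges locally uniformly on $\Omega$ to some $\tilde F$. Passing to the limit in the H\"older estimate shows that $\tilde F$ is H\"older continuous with exponent $\min\{\ga_1,\ga_2\}$ globally on $\Omega$. Combining with the approximation $\|F_\eps-\tilde F_\eps\|_{\fspaceL^\infty(\Omega)}\le C\eps^{\ga_2}\to0$, the same subsequence $F_{\eps_n}$ converges locally uniformly to $\tilde F$, proving the claim. The argument is largely routine; the only mild subtlety, and the one I would treat carefully, is the passage between the discrete grid and arbitrary points $\xi\in\Rset$ when deriving the uniform H\"older bound for $\tilde F_\eps$, since one must avoid picking up an $\eps^{\ga_2}$ term in front of the $\xi$-increment rather than as an additive error.
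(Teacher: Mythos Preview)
Your approach is essentially the same as the paper's: introduce the piecewise linear interpolant in $\xi$, show it is $C\eps^{\ga_2}$-close to $F_\eps$ in $\fspaceL^\infty$, derive a uniform H\"older bound for the interpolant, and conclude via Arzel\`a--Ascoli. The paper resolves the spatial H\"older estimate for the interpolant---precisely the point you flag as the mild subtlety---by a case distinction: for $|\xi_2-\xi_1|\le\eps/2$ both points lie in a single (or two adjacent) linear piece(s), where the slope bound $C\eps^{\ga_2-1}$ gives the estimate directly, while for $|\xi_2-\xi_1|>\eps/2$ one rounds to grid points and absorbs the resulting $\eps^{\ga_2}$ error into $|\xi_2-\xi_1|^{\ga_2}$. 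Your remark that on the unbounded strip $\Omega=I\times\Rset$ Arzel\`a--Ascoli yields only local uniform convergence, requiring a diagonal extraction, is a valid refinement that the paper glosses over.
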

\begin{proof}
For each $\eps$ we define a piecewise linear function $\bar{F}_\eps$ by
\begin{align*}
\bar{F}_\eps\pair{\tau}{\bar\xi}=F_\eps\pair{\tau}{\bar\xi}\qquad\text{for all}\quad
\tau\in{I},\quad\bar\xi\in\eps\Zset,
\end{align*}
and our assumptions yield the H\"older continuity
of $\bar{F}_\eps$ with respect to time, that means
\begin{align*}
\babs{\bar{F}_\eps\pair{\tau_2}{\xi}-\bar{F}_\eps\pair{\tau_1}{\xi}}\leq C\abs{\tau_2-\tau_1}^{\ga_1}
\end{align*}
for all $\xi\in\Rset$ and $\tau_1,\tau_2\in{I}$. Moreover, since 
$\bar{F}_\eps$ is piecewise linear with respect to $\xi$ and due to
\begin{align*}
\babs{F_\eps\pair{\tau}{\bar\xi}}\leq{C},\qquad
\abs{\frac{F_\eps\pair{\tau}{\bar\xi\pm\eps}-F_\eps\pair{\tau}{\bar\xi}}{\eps}}\leq {C} \eps^{\ga_2-1}
\end{align*}
we verify
-- discussing the cases $\sgn{\zeta_1}=\sgn{\zeta_2}$ and 
$\sgn{\zeta_1}\neq\sgn{\zeta_2}$ separately -- the estimate
\begin{align}
\label{lem:compactness-aux-res.eqn1}
\babs{\bar{F}_\eps\pair{\tau}{\bar\xi+\zeta_2}-\bar{F}_\eps\pair{\tau}{\bar\xi+\zeta_1}}\leq
 C\eps^{\ga_2-1}\abs{\zeta_2-\zeta_1}\leq C\abs{\zeta_2-\zeta_1}^{\ga_2}
\end{align}
for all $\tau\in {I}$, $\bar\xi\in\eps\Zset$, and all $\zeta_1,\zeta_2
\in [-\eps,+\eps]$.  In particular, setting $\zeta_1=0$ and taking the
supremum over $\bar\xi$ and $\zeta_2$ we obtain the convergence
estimate
\begin{align*}
\norm{F_\eps-\bar{F}_\eps}_{\fspaceL^\infty\at\Om}\leq C\eps^{\ga_2}.
\end{align*}
We next show that $\bar{F}_\eps$ is H\"older continuous with respect
to $\xi$.  To this end, let $\tau\in{I}$ and $\xi_1,\xi_2\in\Rset$ be
given and denote by $\bar\xi_i$ the natural projection of $\xi_i$ to
$\eps\Zset$, that means $\abs{\bar\xi_i-\xi_i}\leq\eps/2$.  For
$\abs{\xi_1-\xi_2}\leq \eps/2$ we have $\xi_1,\xi_2\in
[\bar\xi_1-\eps, \bar\xi_1+\eps]$ and are hence done by
\eqref{lem:compactness-aux-res.eqn1}.  In the case of
$\abs{\xi_2-\xi_1}> \eps/2$ we combine our assumptions of $F_\eps$
with \eqref{lem:compactness-aux-res.eqn1} and the triangle inequality
to obtain
\begin{align*}
\babs{\bar{F}_\eps\pair{\tau}{\xi_2}-\bar{F}_\eps\pair{\tau}{\xi_1}}
&\leq 
C\abs{\bar\xi_2-\bar\xi_1}^{\ga_2}+C\eps^{\ga_2-1}\bat{
\abs{\xi_1-\bar\xi_1}^{\ga_2}+\abs{\xi_2-\bar\xi_2}^{\ga_2}}
\\&
\leq C\abs{\bar\xi_2-\bar\xi_1}^{\ga_2}+C\eps^{\ga_2}
\end{align*}
and the desired estimate follows with
\begin{align*}
\abs{\bar\xi_1-\bar\xi_2}\leq \abs{\xi_2-\xi_1}+2\cdot\eps/2\leq 3\abs{\xi_2-\xi_1}.
\end{align*}
The claim of the Lemma is now a consequence of the convergence
estimate, the spatial and temporal H\"older estimates for
$\bar{F}_\eps$, and the Arzel\'a-Ascoli theorem applied to
$\bar{F}_\eps$.
\end{proof}
Since the functions $Q_\eps$ are obtained by solving the discrete heat
equation with macroscopic initial data, they converge as $\eps\to0$ to
a smooth solution of the macroscopic heat equation $\partial_\tau
Q=\partial_\xi^2 Q$. This is not surprising and can be proven in many
different ways. For our purposes, it is sufficient to observe that
strong compactness is provided by combining Lemma
\ref{lem:compactness-aux-res} with the following H\"older estimates.
\begin{lemma}[H\"older estimates for $Q_\eps$]
  \label{lem:holder-Q}
There exists a constant $C$ independent of $\eps$ such that
\begin{align*}
  \babs{Q_\eps\pair{\tau_2}{\xi_2}-Q_\eps\pair{\tau_1}{\xi_1}}
  \leq
  {C}\bat{\abs{\tau_2-\tau_1}^{1/2}+\abs{\xi_2-\xi_1}}
\end{align*}
holds with $\xi_1,\xi_2\in\eps\Zset$ and $0\leq\tau_1,\tau_2\leq\tau_\fin$
for all $0<\eps\leq\eps_*$.
\end{lemma}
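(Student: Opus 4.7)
The plan is to split the increment into a spatial and a temporal contribution via the triangle inequality, writing
\[
Q_\eps\pair{\tau_2}{\xi_2}-Q_\eps\pair{\tau_1}{\xi_1}
=
\bat{Q_\eps\pair{\tau_2}{\xi_2}-Q_\eps\pair{\tau_2}{\xi_1}}
+
\bat{Q_\eps\pair{\tau_2}{\xi_1}-Q_\eps\pair{\tau_1}{\xi_1}},
\]
and to estimate each summand separately using the two bounds supplied by Corollary~\ref{cor:decay-regular-velocities}.

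For the spatial increment, write $\xi_i=\eps j_i$ with $j_i\in\Zset$ and telescope:
\[
\babs{q_{j_2}\at{t}-q_{j_1}\at{t}}
\leq
\sum_{j\text{ between }j_1,j_2}\babs{\nabla_+ q_j\at{t}}
\leq
\alpha\eps\abs{j_2-j_1}
=
\alpha\abs{\xi_2-\xi_1},
\]
uniformly in $t$ and $\eps$. This gives the Lipschitz part of the estimate directly.

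For the temporal increment at fixed $\xi_1=\eps j_1$, set $t_i=\tau_i/\eps^2$ and integrate the second bound from Corollary~\ref{cor:decay-regular-velocities}:
\[
\babs{q_{j_1}\at{t_2}-q_{j_1}\at{t_1}}
\leq
\int_{t_1}^{t_2}\bat{\alpha\eps^2+\beta\eps g_0\at{t}}\dint t
=
\alpha\at{\tau_2-\tau_1}+\beta\eps\int_{t_1}^{t_2}g_0\at{t}\dint t.
\]
The key input is the heat-kernel bound $g_0\at{t}\leq C\min\{1,t^{-1/2}\}$ from Appendix~\ref{sec:discrete-heat-kernel}, which yields
\[
\int_{t_1}^{t_2}g_0\at{t}\dint t
\leq
2C\bat{\sqrt{t_2}-\sqrt{t_1}}
\leq
2C\sqrt{t_2-t_1}
\]
(and a trivial bound by $t_2-t_1$ when both endpoints are small, which is absorbed into the same estimate). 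Substituting back, $\eps\sqrt{t_2-t_1}=\sqrt{\tau_2-\tau_1}$ and $\tau_2-\tau_1\leq\tau_\fin^{1/2}\sqrt{\tau_2-\tau_1}$, so the temporal contribution is bounded by $C\sqrt{\tau_2-\tau_1}$ with a constant depending on $\alpha$, $\beta$, and $\tau_\fin$.

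There is no real obstacle here; the only nontrivial input is the sharp decay $g_0\at{t}\lesssim t^{-1/2}$ of the discrete heat kernel, which is what converts the $\beta\eps$ factor in the pointwise velocity bound into a $\sqrt{\tau_2-\tau_1}$ behaviour in macroscopic time. Combining the two one-variable estimates via the triangle inequality gives the claim with a single constant $C=C\at{\alpha,\beta,\tau_\fin}$.
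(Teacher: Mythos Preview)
Your proposal is correct and follows essentially the same approach as the paper: split into spatial and temporal increments, telescope the spatial part using the gradient bound from Corollary~\ref{cor:decay-regular-velocities}, and integrate the velocity bound for the temporal part, using $g_0(t)\lesssim t^{-1/2}$ to produce the $\sqrt{\tau_2-\tau_1}$ behaviour. The only cosmetic difference is that the paper writes the integrated bound as $\sqrt{\tau_2}-\sqrt{\tau_1}$ and then invokes $\sqrt{\tau_2}-\sqrt{\tau_1}\leq C\sqrt{\tau_2-\tau_1}$ on the bounded interval, whereas you apply $\sqrt{t_2}-\sqrt{t_1}\leq\sqrt{t_2-t_1}$ one step earlier; this is immaterial.
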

\begin{proof}
From Corollary \ref{cor:decay-regular-velocities} and $\abs{{g}_0\at{t}}\leq C t^{-1/2}$ we derive
\begin{align*}
\abs{q_{j}\at{t_2}-q_j\at{t_1}}\leq \int_{t_1}^{t_2}\abs{\dot{q}_j\at{t}}\dint{t}\leq \alpha\eps^2 \at{t_2-t_1}+{C}\beta\eps\at{\sqrt{t_2}-\sqrt{t_1}},
\end{align*}
as well as
\begin{align*}
\babs{q_{j_2}\at{t_1}-q_{j_1}\at{t_1}}\leq \sum_{j=j_1}^{j_2-1} \babs{\nabla_+ q_j\at{t_1}}\leq \alpha\eps\at{j_2-j_1}
\end{align*}
for all $j_1,j_2\in\Zset$ with $j_1<j_2$ and all $0\leq t_1\leq
t_2<\infty$. Setting $j_i=\xi_i/\eps$ and $t_i=\tau_i/\eps^2$ we
therefore get
\begin{align*}
  \babs{Q_\eps\pair{\tau_2}{\xi_2}-Q_\eps\pair{\tau_1}{\xi_1}}
  \leq
  \alpha\at{\xi_2-\xi_1}+\alpha\at{\tau_2-\tau_1}
  +
  \beta\at{\sqrt{\tau_2}-\sqrt{\tau_1}},
\end{align*}
and the claim follows since $0\leq\tau_1\leq\tau_2\leq\tau_\fin$ implies
$\abs{\tau_2-\tau_1}+\at{\sqrt{\tau_2}-\sqrt{\tau_1}}\leq
C\sqrt{\tau_2-\tau_1}$.
\end{proof}
A crucial part of our analysis is to establish strong
$\fspaceL^\infty$-compactness of the functions $R_\eps$. The main
difficulty is that $R_\eps$ equals the sum of $K_\eps$ shifted and
delayed versions of the discrete heat kernel producing a temporal
discontinuity at any of the phase transition times
$\tau^*_{1,\,\eps},\ldots,\tau^*_{K_\eps,\,\eps}$. In order to control
the impact of all these jumps in time we split
\begin{align*}
  R_\eps\pair{\tau}{\xi}
  =
  R_{1,\,\eps}\pair{\tau}{\xi}+R_{2,\,\eps}\pair{\tau}{\xi}
\end{align*}
with 
\begin{align*}
  R_{1,\,\eps}\pair{\tau}{\xi}
  :=
  -2 \sum_{k =1}^{K_\eps} H_\eps\pair{\tau-\tau^*_{k,\,\eps}}{\xi-\eps{k}}
\end{align*}
and $R_{2,\,\eps}:= R_\eps- R_{1,\,\eps}$. Here the function
$H_{\eps}:\Rset^2\to\Rset$,
\begin{align*}
H_{\eps}\pair{\tau}{\xi}:=\left\{ \begin{array}{lcl}
0&\text{for}& \tau\leq 0,
\\%
\D \frac{\tau}{d_*\eps}G_\eps\pair{d_*\eps}{\xi}  &\text{for}& 0\leq{\tau}\leq  d_*\eps,
\\%
G_\eps\pair{\tau}{\xi}&\text{for}& \tau\geq {d_*}\eps
\end{array}
\right.
\end{align*}
can be regarded as a temporally regularized version of $G_\eps$, 
where the latter represents the discrete heat kernel in macroscopic 
coordinates according to $G_\eps\pair{\eps^2t}{\eps j}=g_j\at{t}$.
In particular, $H_\eps$ is continuous with respect to $\tau\in\Rset$ and 
differs from $G_\eps$ for $0\leq\tau\leq d_*\eps$ only.
\par
The function $R_{2,\,\eps}$ contains all the temporal jumps caused by
the phase transitions and can therefore not be compact
$\fspaceL^\infty\at\Om$.  The key observation, however, is that
$R_{2,\,\eps}$ is still uniformly bounded and converges as $\eps\to0$
to $0$ in $\fspaceL^s\at\Om$ for any $1\leq{s}<\infty$.  The
macroscopic limit of $\at{R_\eps}_{0<\eps\leq\eps_*}$ is therefore
completely determined by the family
$\at{R_{1,\,\eps}}_{0<\eps\leq\eps_*}$, where each function
$R_{1,\,\eps}$ is continuous with respect to $\tau$ and satisfies
\begin{align}
  \label{eq:interface.condition}
  Q_\eps\pair{\tau^*_{k,\,\eps}}{\eps k}
  +
  R_{1,\,\eps}\pair{\tau^*_{k,\,\eps}}{\eps k}
  =
  \lim\limits_{\tau\nearrow\tau^*_{k,\,\eps}} P_\eps\pair{\tau}{\eps{k}}=1
\end{align}
for all $k\geq 1$ with $\tau^*_{k,\,\eps}\leq\tau_\fin$ thanks to
$\lim_{t\nearrow t_k^*}p_k\at{t}=1$.
\begin{lemma}[bounds for $R_{2,\,\eps}$]
  \label{lem:bounds-R2}
There exists a constant $C$ independent of $\eps$ such that
\begin{align*}
\norm{R_{2,\,\eps}}_{\fspaceL^\infty\at\Om}\leq{C},\qquad \norm{R_{2,\,\eps}}_{\fspaceL^1\at\Om}\leq C\eps
\end{align*}
holds for all $0<\eps\leq\eps_*$.
\end{lemma}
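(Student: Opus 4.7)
The plan is to exploit the separation of phase transition times established in \eqref{eq:props-keps} in order to disentangle the overlapping regularized heat kernels summed up in $R_\eps$ and $R_{1,\,\eps}$. First I would show that $R_{2,\,\eps}$ is supported in time on the union of the intervals $J_k:=[\tau^*_{k,\,\eps},\,\tau^*_{k,\,\eps}+d_*\eps]$ for $k=1,\ldots,K_\eps$. Indeed, for $\tau\in J_k$ and any index $k^\prime>k$, the time separation gives $\tau<\tau^*_{k+1,\,\eps}\leq\tau^*_{k^\prime,\,\eps}$, so both $H_\eps$ and the corresponding $G_\eps$-contribution vanish; for $k^\prime<k$ we have $\tau-\tau^*_{k^\prime,\,\eps}\geq 2d_*\eps\at{k-k^\prime}\geq d_*\eps$, so on this range $H_\eps=G_\eps$ and the two contributions cancel in the difference $R_{2,\,\eps}=R_\eps-R_{1,\,\eps}$.

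Second, because the lower bound $\tau^*_{k+1,\,\eps}-\tau^*_{k,\,\eps}\geq 2d_*\eps$ implies that the intervals $J_k$ are pairwise disjoint, only the $k$-th summand of the two sums contributes on $J_k$. Introducing $s=\tau-\tau^*_{k,\,\eps}\in[0,d_*\eps]$, the definition of $H_\eps$ therefore yields the explicit formula
\begin{align*}
R_{2,\,\eps}\pair{\tau}{\xi}
=-2 G_\eps\pair{s}{\xi-\eps k}+\frac{2s}{d_*\eps}\, G_\eps\pair{d_*\eps}{\xi-\eps k}
\qquad\text{for}\quad\tau\in J_k.
\end{align*}

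Third, I would bound each of the two summands by the properties of the discrete heat kernel collected in Appendix \ref{sec:discrete-heat-kernel}. The maximum principle for \eqref{eq:ap:heat-eq} yields $0\leq g_j\at{t}\leq 1$ for all $j\in\Zset$, $t\geq 0$, hence $\nabs{G_\eps}\leq 1$ pointwise and $\nabs{R_{2,\,\eps}}\leq 4$ on $\Om$, which gives the $\fspaceL^\infty$-estimate. For the $\fspaceL^1$-estimate, I would use mass conservation $\sum_{j\in\Zset} g_j\at{t}=1$, which translates via the piecewise-constant identification into $\int_\Rset G_\eps\pair{\tau}{\xi}\dint\xi=\eps$ for every $\tau\geq 0$. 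Integrating the explicit formula in $\xi$ gives $\int_\Rset\babs{R_{2,\,\eps}\pair{\tau}{\cdot}}\dint\xi\leq 4\eps$ uniformly for $\tau\in J_k$.

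Fourth, integrating this in time over $J_k$ (which has length $d_*\eps$) and summing the disjoint contributions over $k=1,\ldots,K_\eps$, I would invoke the bound $K_\eps\leq\tau_\fin/\at{2d_*\eps}$ from \eqref{eq:props-keps} to conclude
\begin{align*}
\norm{R_{2,\,\eps}}_{\fspaceL^1\at\Om}
\leq K_\eps\cdot 4 d_*\eps^2
\leq 2\tau_\fin\,\eps.
\end{align*}
I do not expect a genuine obstacle: the structural input doing all of the work is the mesoscopic lower bound on the time gap between two consecutive phase transitions, which guarantees that the $K_\eps$ independent jumps regularized in $R_{1,\,\eps}$ do not pile up and can be controlled one by one using only the most elementary properties of the discrete heat kernel.
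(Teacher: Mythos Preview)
Your proposal is correct and follows essentially the same route as the paper: identify the temporal support of $R_{2,\,\eps}$ as the disjoint union of the intervals $[\tau^*_{k,\,\eps},\tau^*_{k,\,\eps}+d_*\eps]$, observe that on each such interval only the $k$-th summand survives, and then apply the elementary bounds $0\leq g_j\leq 1$ and $\sum_j g_j=1$ together with $K_\eps\leq \tau_\fin/(2d_*\eps)$. The paper's argument is organized slightly differently and obtains marginally sharper constants (e.g.\ $2$ instead of $4$ for the $\fspaceL^\infty$-bound), but the substance is identical.
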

\begin{proof}
By construction and Corollary \ref{cor:structure-of-p} we have 
\begin{align*}
  \supp \, R_{2,\,\eps}\subset \bat{ I_{1,\,\eps}\cup\dotsb\cup I_{K_\eps,\,\eps}}\times\Rset ,\qquad
I_{k,\,\eps}:=[\tau^*_{k,\,\eps},\, \tau^*_{k,\,\eps}+d_*\eps],
\end{align*}
and the intervals $I_{k,\,\eps}$ are pairwise disjoint thanks to
\eqref{eq:props-keps}.  In particular, using
\begin{align*}
R_{2,\,\eps}\pair{\tau^*_{k,\,\eps}+\si}{j\eps}= 2 H_\eps\pair{\si}{\eps j -\eps k} - 2 G_\eps\pair{\si}{\eps j- \eps k}
\qquad
\text{for all}\quad j\in\Zset,\quad \si\leq d_*\eps,
\end{align*}
we estimate
\begin{align*}
\abs{R_{2,\,\eps}\pair{\tau^*_{k,\,\eps}+\si}{j\eps}}\leq 2
\Bat{G_{\eps}\pair{0}{\eps j-\eps k} +  G_\eps\pair{d_*\eps}{\eps j- \eps k}} \leq2,
\end{align*}
see Lemma \ref{lem:discr-heat-kernel}, as well as
\begin{align*}
\int_\Rset \abs{R_{2,\,\eps}\pair{\tau^*_{k,\,\eps}+\si}{\xi}}\dint\xi\leq 2\eps
\sum_{j\in\Zset}  \Bat{g_{j-k}\at{0}+g_{j-k}\bat{d_*/\eps}}=4\eps
\end{align*}
thanks to $\sum_{j\in\Zset} g_j\at{t}=1$ for all $t$. The first
estimate implies $\norm{R_{2,\,\eps}}_{\fspaceL^\infty\at\Om} \leq 2$,
whereas the second gives rise to
\begin{align*}
\norm{R_{2,\,\eps}}_{\fspaceL^1\at\Om}\leq\sum_{k=1}^{K_\eps}
\int _0^{d_*\eps}
\int _\Rset \abs{R_{2,\,\eps}\pair{\tau^*_{k,\,\eps}+\si}{\xi}}\dint\xi\dint\si\leq
2 K_\eps d_* \eps ^2 \leq  \tau_\fin \eps,
\end{align*}
where we used \eqref{eq:props-keps} again.
\end{proof}
It remains to establish $\fspaceL^\infty$-compactness results for
$R_{1,\,\eps}$. To this end we next derive a further auxiliary result
concerning the H\"older continuity of $H_\eps$.
\begin{lemma}[H\"older estimates for $H_\eps$]
\label{lem:hoelder-est-h}
For each $0<\gamma<1$ there exists a constant $C$ independent of $\eps$ such that
\begin{align*}
\babs{H_{\eps}\pair{\tau_2}{\xi_2}-H_{\eps}\pair{\tau_1}{\xi_1}}\leq 
C \eps  \at{
\frac{\abs{\tau_2-\tau_1}^{\gamma}}{\max\{d_*\eps,\tau_1\}^{\gamma+1/2}}+
\frac{\abs{\xi_2-\xi_1}^{1/2}}{\max\{ d_* \eps,\tau_1\}^{3/4}}
}
\end{align*}
holds with $\xi_1,\xi_2\in\eps\Zset$ and $0\leq\tau_1\leq\tau_2\leq\tau_\fin$
for all $0<\eps\leq\eps_*$.
\end{lemma}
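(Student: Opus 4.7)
The plan is to derive both the temporal and the spatial H\"older bounds from pointwise $L^\infty$ and derivative estimates for the rescaled discrete heat kernel
\[
G_\eps\pair{\tau}{\xi}=g_{\xi/\eps}\at{\tau/\eps^2},
\]
and then interpolate. The inputs coming from Appendix~\ref{sec:discrete-heat-kernel} (in particular $\abs{g_0\at{t}}\leq C/\sqrt t$ together with the derivative estimates $\abs{\dot g_j\at{t}}\leq C/t^{3/2}$ and $\abs{g_{j+1}\at{t}-g_j\at{t}}\leq C/t$) translate after rescaling into
\[
\abs{G_\eps\pair{\tau}{\xi}}\leq \frac{C\eps}{\sqrt{\tau}},\qquad \abs{\partial_\tau G_\eps\pair{\tau}{\xi}}\leq \frac{C\eps}{\tau^{3/2}},\qquad \abs{G_\eps\pair{\tau}{\xi+\eps}-G_\eps\pair{\tau}{\xi}}\leq \frac{C\eps^2}{\tau}
\]
valid for $\tau\geq d_*\eps$ and $\xi\in\eps\Zset$.

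First, I would treat the regime $\tau_1\geq d_*\eps$, where $H_\eps$ coincides with $G_\eps$. I estimate the time increment in two ways: by twice the $L^\infty$-bound to obtain $2C\eps/\sqrt{\tau_1}$, and by integrating the time-derivative bound across $[\tau_1,\tau_2]$ using the envelope $s^{-3/2}\leq \tau_1^{-3/2}$ to obtain $C\eps\abs{\tau_2-\tau_1}/\tau_1^{3/2}$. Since the actual increment is at most the minimum of these two and since $\min\{A,B\}\leq A^{1-\gamma}B^{\gamma}$ for any $\gamma\in(0,1)$, the claimed temporal bound $C\eps\abs{\tau_2-\tau_1}^{\gamma}/\tau_1^{\gamma+1/2}$ follows immediately. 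For the space increment, I telescope across the $\abs{\xi_2-\xi_1}/\eps$ intermediate lattice sites using the spatial-difference estimate to get $C\eps\abs{\xi_2-\xi_1}/\tau$, and then interpolate this with the $L^\infty$-bound at exponent $\tfrac12$ to reach $C\eps\abs{\xi_2-\xi_1}^{1/2}/\tau^{3/4}$.

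Second, I extend to $\tau_1<d_*\eps$ by exploiting the definition of $H_\eps$ on $[0,d_*\eps]$. There the prefactor $\tau/(d_*\eps)$ is bounded by one and Lipschitz with constant $1/(d_*\eps)$, while the slice $G_\eps\pair{d_*\eps}{\cdot}$ obeys all three bounds above with $\tau=d_*\eps$. Splitting the increment $H_\eps\pair{\tau_2}{\xi_2}-H_\eps\pair{\tau_1}{\xi_1}$ via the intermediate point $\pair{\tau_1}{\xi_2}$ and, in the boundary case $\tau_1<d_*\eps<\tau_2$, additionally inserting $\pair{d_*\eps}{\xi_2}$ so as to apply the linear-regime estimate on $[\tau_1,d_*\eps]$ and the kernel estimate on $[d_*\eps,\tau_2]$, one recovers exactly the bound with $\tau_1$ replaced by $d_*\eps$, which matches $\max\{d_*\eps,\tau_1\}$.

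The main obstacle is a clean bookkeeping at the matching scale $\tau\approx d_*\eps$: the interpolation $\min\{A,B\}\leq A^{1-\gamma}B^\gamma$ is sharp only when $A\sim B$, and one must check that the constants produced by the three cases ($\tau_2\leq d_*\eps$; $\tau_1\leq d_*\eps\leq \tau_2$; $d_*\eps\leq\tau_1$) glue together without gaining negative powers of $\eps$, in particular when $\tau_2-\tau_1$ is comparable to $d_*\eps$. A secondary technical point is verifying the derivative bounds $\abs{\dot g_j\at{t}}\leq C/t^{3/2}$ and $\abs{g_{j+1}\at{t}-g_j\at{t}}\leq C/t$ uniformly in $j$; the former follows from $\dot g_j=g_{j+1}+g_{j-1}-2g_j$ combined with Lemma~\ref{lem:discr-heat-kernel} and the monotonicity properties of Lemma~\ref{lem:discr-heat-kernel-monotonicty}, and the latter can be read off from the modified-Bessel-function representation $g_j\at{t}=\mhexp{-2t}I_{\abs{j}}\at{2t}$ used in Appendix~\ref{sec:discrete-heat-kernel}.
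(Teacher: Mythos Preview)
Your strategy is correct and gives the lemma, but it differs from the paper's proof in one structural way worth noting. The paper does not interpolate at all: it simply invokes the ready-made H\"older estimates from Lemma~\ref{Lem:App.Holder}, which already provide
\[
\babs{g_j\at{t_2}-g_j\at{t_1}}\leq C_\gamma\,(1+t_1)^{-\gamma-1/2}\,\abs{t_2-t_1}^{\gamma}
\quad\text{and}\quad
\babs{g_{j_2}\at{t}-g_{j_1}\at{t}}\leq C\,(1+t)^{-3/4}\,\abs{j_2-j_1}^{1/2},
\]
and then just rescales. Your route instead re-derives these two estimates in line via the interpolation device $\min\{A,B\}\leq A^{1-\gamma}B^{\gamma}$, which is perfectly valid and arguably more transparent about where the exponent $\gamma+\tfrac12$ comes from; the paper's derivation of Lemma~\ref{Lem:App.Holder} achieves the same bound by integrating $\dot g_j$ exactly and showing the resulting one-variable function is bounded, rather than by the min-trick.

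Two small corrections. First, the pointwise spatial-difference bound $\abs{g_{j+1}\at{t}-g_j\at{t}}\leq C/t$ that you rely on is true but is not stated anywhere in Appendix~\ref{sec:discrete-heat-kernel}; the appendix only records the $\ell^2$ bound $\sum_j\abs{\nabla_+g_j}^2\leq C(1+t)^{-3/2}$ and the $\tfrac12$-H\"older consequence via Cauchy--Schwarz. You would either have to add a short Fourier computation (bounding $\int\abs{e^{\iu k}-1}\,\mhexp{-\rho(k)t}\dint k$) or, more simply, bypass this step and go straight to the $\tfrac12$-H\"older estimate as the paper does. Second, the appendix uses only the Fourier representation \eqref{eq:ap:discr-heat-kernel}; there is no modified-Bessel-function formula in the paper, so your reference to it should be replaced.
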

\begin{proof}
Suppose at first that $d_*\eps\leq\tau_1\leq \tau_2$. Thanks to the temporal
H\"older estimates for the discrete heat kernel, see Lemma \ref{Lem:App.Holder}, we find
\begin{align*}
\babs{H_{\eps}\pair{\tau_2}{\xi_2}-H_{\eps}\pair{\tau_1}{\xi_2}}\leq \frac{C\at{\D\frac{\tau_2}{\eps^2}-\frac{\tau_1}{\eps^2}}^\gamma}{\D\at{\frac{\tau_1}{\eps^2}}^{\gamma+1/2}}=
\frac{C\eps}{ \tau_1^{\gamma+1/2}}\abs{\tau_2-\tau_1}^\gamma
\end{align*}
for some constant $C$ independent of $\eps$ and $\xi_2$. Similarly, Lemma \ref{Lem:App.Holder} also ensures that
\begin{align*}
\babs{H_{\eps}\pair{\tau_1}{\xi_2}-H_{\eps}\pair{\tau_1}{\xi_1}}\leq \frac{C\abs{\D\frac{\xi_2}{\eps}-\frac{\xi_1}{\eps}}^{1/2}}{\D\at{\frac{\tau_1}{\eps^2}}^{3/4}}=\frac{C\eps}{\tau_1^{3/4}}\abs{\xi_2-\xi_1}^{1/2}.
\end{align*}
Now suppose that 
$0\leq\tau_1\leq\tau_2\leq d_*\eps$. We then estimate
\begin{align*}
\babs{H_{\eps}\pair{\tau_2}{\xi_2}-H_{\eps}\pair{\tau_1}{\xi_2}}
\frac{}{}&\leq \frac{g_{\xi_2/\eps}\bat{d_*/\eps}}{d_*\eps} \at{\tau_2-\tau_1}
\leq \frac{C}{d_*^{3/2}\eps^{1/2}} \at{\tau_2-\tau_1}
\\&\leq \frac{C}{d_*^{3/2}\eps^{1/2}} \at{d_*\eps}^{1-\gamma}\abs{\tau_2-\tau_1}^\gamma
\leq \frac{C\eps}{\at{d_*\eps}^{\gamma+1/2}}\abs{\tau_2-\tau_1}^\gamma
\end{align*}
as well as
\begin{align*}
\babs{H_{\eps}\pair{\tau_1}{\xi_2}-H_{\eps}\pair{\tau_1}{\xi_1}}=
\frac{\tau_1}{d_*\eps}\babs{g_{\xi_2/\eps}\at{d_*/\eps}
-g_{\xi_1/\eps}\at{d_*/\eps}}\leq  \frac{}{}\frac{C\abs{\D\frac{\xi_2}{\eps}-\frac{\xi_1}{\eps}}^{1/2}}{\D\at{\frac{d_*}{\eps}}^{3/4}}
=\frac{C\eps \abs{\xi_2-\xi_1}^{1/2}}{\at{d_*\eps}^{3/4}}.
\end{align*}
In summary, we have established the desired estimates in the special cases $0\leq\tau_1\leq\tau_2\leq d_*\eps$ or
$d_*\eps\leq\tau_1\leq\tau_2$. All other cases can be easily be traced back to these cases using the triangle inequality.
\end{proof}
We are now able to prove our main technical result in this section.
\begin{lemma}[H\"older estimates for $R_{1,\,\eps}$]
  \label{lem:hoelder-est-r1eps}
For each $0<\ga<1/2$ there exists a constant $C$ independent of $\eps$ such that
\begin{align*}
\babs{R_{1,\,\eps}\pair{\tau_2}{\xi_2}-R_{1,\,\eps}\pair{\tau_1}{\xi_1}} \leq {C}\bat{\abs{\tau_2-\tau_1}^{\ga}+\abs{\xi_2-\xi_1}^{1/2}}
\end{align*}
holds with $\xi_1,\xi_2\in\eps\Zset$ and $0\leq\tau_1\leq\tau_2\leq\tau_\fin$
for all $0<\eps\leq\eps_*$.
\end{lemma}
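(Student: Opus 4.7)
The plan is to exploit the decomposition
\[
R_{1,\,\eps}\pair{\tau}{\xi}=-2\sum_{k=1}^{K_\eps}H_\eps\pair{\tau-\tau^*_{k,\,\eps}}{\xi-\eps k},
\]
the pointwise H\"older estimate from Lemma \ref{lem:hoelder-est-h}, and the minimal separation $\tau^*_{k+1,\,\eps}-\tau^*_{k,\,\eps}\geq 2d_*\eps$ from Lemma \ref{lem:estimate-m_k}. The crucial observation is that for each fixed reference point only indices with $\tau^*_{k,\,\eps}\leq\tau$ contribute, and thanks to the separation we may compare the sum $\sum_k\max\{d_*\eps,\tau-\tau^*_{k,\,\eps}\}^{-\theta}$ with the dyadic sum $\sum_n(d_*\eps\max\{1,2n\})^{-\theta}$, and then with a convergent Riemann integral on the bounded time interval $[0,\tau_\fin]$.

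\textbf{Spatial increment.} Fix $\tau$ and $\xi_1,\xi_2\in\eps\Zset$. Let $K$ denote the largest index with $\tau^*_{K,\,\eps}\leq\tau$. Applying Lemma \ref{lem:hoelder-est-h} to each summand and using $\tau-\tau^*_{k,\,\eps}\geq 2d_*\eps(K-k)$ yields
\[
\babs{R_{1,\,\eps}\pair{\tau}{\xi_2}-R_{1,\,\eps}\pair{\tau}{\xi_1}}
\leq
C\eps\,\abs{\xi_2-\xi_1}^{1/2}(d_*\eps)^{-3/4}\sum_{n=0}^{K-1}(1+n)^{-3/4}.
\]
Since the exponent $3/4<1$, the partial sum is bounded by $CK^{1/4}$, and $K\leq K_\eps\leq C/\eps$ from \eqref{eq:props-keps} gives $\eps\cdot\eps^{-3/4}\cdot\eps^{-1/4}=1$, hence the desired bound $C\abs{\xi_2-\xi_1}^{1/2}$.

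\textbf{Temporal increment, old terms.} Fix $\xi\in\eps\Zset$, set $\Delta=\tau_2-\tau_1$, and split according to the position of $\tau^*_{k,\,\eps}$ relative to the interval $(\tau_1,\tau_2]$. For $\tau^*_{k,\,\eps}\leq\tau_1$ both arguments of $H_\eps$ are nonnegative, and Lemma \ref{lem:hoelder-est-h} combined with the separation gives
\[
\sum_{k:\tau^*_{k,\,\eps}\leq\tau_1}\babs{H_\eps\pair{\tau_2-\tau^*_{k,\,\eps}}{\cdot}-H_\eps\pair{\tau_1-\tau^*_{k,\,\eps}}{\cdot}}
\leq
C\eps\,\Delta^{\gamma}(d_*\eps)^{-\gamma-1/2}\sum_{n=0}^{K-1}(1+n)^{-\gamma-1/2}.
\]
Here the assumption $\gamma<1/2$ is used: the exponent $\gamma+1/2<1$ makes the sum at most $CK^{1/2-\gamma}\leq C\eps^{\gamma-1/2}$, and the prefactors collapse to $\eps\cdot\eps^{-\gamma-1/2}\cdot\eps^{\gamma-1/2}=1$. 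This produces the contribution $C\Delta^{\gamma}$.

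\textbf{Temporal increment, new terms.} For $\tau^*_{k,\,\eps}\in(\tau_1,\tau_2]$ one has $H_\eps\pair{\tau_1-\tau^*_{k,\,\eps}}{\cdot}=0$, so only $H_\eps\pair{s_k}{\xi-\eps k}$ with $s_k=\tau_2-\tau^*_{k,\,\eps}\in[0,\Delta]$ survives. Enumerating these indices from largest to smallest yields $s_n\geq 2nd_*\eps$ and at most $N+1\leq 1+\Delta/(2d_*\eps)$ terms. Combining the elementary bounds
\[
\babs{H_\eps\pair{s}{\xi}}\leq\frac{Cs}{d_*^{3/2}\sqrt{\eps}}\quad\text{for}\quad 0\leq s\leq d_*\eps,
\qquad
\babs{H_\eps\pair{s}{\xi}}\leq\frac{C\eps}{\sqrt{s}}\quad\text{for}\quad s\geq d_*\eps
\]
(inherited from $g_j\at{t}\leq C/\sqrt{t}$ in Lemma \ref{lem:discr-heat-kernel}), the worst individual term is of order $\sqrt{\eps}$ and
\[
\sum_{n=0}^{N}\babs{H_\eps\pair{s_n}{\cdot}}\leq C\sqrt{\eps}+C\sqrt{\eps}\sum_{n=1}^{N}n^{-1/2}\leq C\sqrt{\eps N}\leq C\sqrt{\Delta}.
\]
A separate direct bound using $s\leq\Delta\leq 2d_*\eps$ handles the small-$\Delta$ regime where $N=0$, again giving $C\sqrt{\Delta}$. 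Since $\Delta\leq\tau_\fin$, we have $\sqrt{\Delta}\leq\tau_\fin^{1/2-\gamma}\Delta^{\gamma}$, completing the bound.

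\textbf{Expected difficulty.} The delicate point is Step 2 (old terms): the precise interplay between the factor $\eps$ from $H_\eps$, the singular denominator $(d_*\eps)^{-\gamma-1/2}$, and the harmonic-type sum $\sum n^{-\gamma-1/2}$ saturated at $N=O(1/\eps)$ must conspire to produce an $\eps$-independent constant, and this is precisely what forces the threshold $\gamma<1/2$. Also requires care in the new-term estimate because the same term near $s=d_*\eps$ is bounded differently by the two regimes of $H_\eps$; both regimes must be checked to avoid spurious blow-ups.
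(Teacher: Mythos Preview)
Your proof is correct and follows essentially the same strategy as the paper: split into spatial and temporal increments, control the ``old'' temporal terms via the H\"older bound for $H_\eps$ from Lemma~\ref{lem:hoelder-est-h}, and control the ``new'' temporal terms by the pointwise decay of the heat kernel. The only cosmetic difference is that where you bound $\sum_{n=1}^{K}n^{-\theta}\leq CK^{1-\theta}$ and then invoke $K\leq C/\eps$, the paper recognises the sum $\sum_k \eps\,(\tau-\tau^*_{k,\,\eps})^{-\theta}$ as a Riemann approximation of the convergent integral $\int_0^{\tau}\sigma^{-\theta}\,d\sigma$; these two computations are equivalent and both identify $\gamma<1/2$ as the threshold for the temporal estimate.
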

\begin{proof}
  It is sufficient to proof the assertions in time and space separately.
\par
\underline{\emph{H\"older continuity with respect to $\xi$}}:  
Let $0\leq \tau\leq\tau_\fin$ and $\xi_1,\xi_2\in\Rset$ be given. Then
there exists $m_\eps\in\Zset$ such that
\begin{align*}
\tau^*_{m_\eps,\,\eps} < \tau \leq \tau^*_{m_\eps+1,\,\eps}
\end{align*}
and \eqref{eq:props-keps} ensure that $\eps m_\eps \leq {\tau}/\at{2
  d_*}$ as well as
\begin{align*}
\tau-\tau^*_{k,\,\eps}\geq 2d_*\eps\at{m_\eps-k}\qquad \text{for all}\quad
 k=1,\ldots, m_\eps.
\end{align*}
In particular, we have
\begin{align*}
H_\eps\pair{\tau-\tau^*_{k,\,\eps}}\xi=0 \qquad \text{for all}\quad k> m_\eps,\quad \xi\in\Rset,
\end{align*}
so Lemma \ref{lem:hoelder-est-h} yields
\begin{align*}
\babs{R_{1,\,\eps}\pair{\tau}{\xi_2}-R_{1,\,\eps}\pair{\tau}{\xi_1}}&\leq
2\sum_{k=1}^{m_\eps}
\babs{H_{\eps}\pair{\tau-\tau^*_{k,\,\eps}}{\xi_2}-H_{\eps}\pair{\tau-\tau^*_{k,\,\eps}}{\xi_1}}
\leq C S_{\eps} \abs{\xi_2-\xi_1}^{1/2}
\end{align*}
with
\begin{align*}
S_\eps &:= 
\sum_{k=1}^{m_\eps}
\frac{\eps}{\at{\max\big\{d_*\eps,\, \tau-\tau^*_{k,\,\eps}\big\}}^{3/4}}
\leq
\frac{1}{d_*}+\sum_{k=1}^{m_\eps-1}\frac{\eps}{\at{2d_*\eps \at{m_\eps - k}}^{3/4}}
\leq \frac{1}{d_*}+\frac{1}{2d_*}\int_{0}^{\tau}\frac{\dint{\si}}{\si^{3/4}}\leq{C},
\end{align*}
where we used the Riemann sum approximation of the integral as well as the monotonicity of the integrand. 
\par
\underline{\emph{H\"older continuity with respect to $\tau$}}:  
Now let $\xi\in\Rset$ and $0\leq\tau_1<\tau_2\leq\tau_\fin$ be fixed, and choose
$m_{\eps},n_{\eps}\in\Zset$ such that
\begin{align*}
\tau^*_{m_{\eps},\,\eps} < \tau_1 \leq \tau^*_{m_{\eps}+1,\,\eps},\qquad
\tau^*_{n_{\eps},\,\eps} < \tau_2 \leq \tau^*_{n_{\eps}+1,\,\eps}.
\end{align*}
This gives
\begin{align*}
\eps m_{\eps}\leq \tau_1/\at{2d_*},\qquad
\eps\at{ n_{\eps} -m_{\eps}} \leq \at{\tau_2-\tau_1}/\at{2d_*}
\end{align*}
as well as
\begin{align*}
\babs{R_{1,\,\eps}\pair{\tau_2}{\xi}-R_{1,\,\eps}\pair{\tau_1}{\xi}}&\leq  2X_{\eps}+2Y_\eps,
\end{align*}
where
\begin{align*}
X_{\eps}:= 
\sum_{k=1}^{m_{\eps}}
\babs{H_{\eps}\pair{\tau_2-\tau^*_{k,\,\eps}}{\xi}-H_{\eps}\pair{\tau_1-\tau^*_{k,\,\eps}}{\xi}},\qquad
Y_{\eps}:= 
\!\!\sum_{k=m_{\eps}+1}^{n_{\eps}}\!\!\
\babs{H_{\eps}\pair{\tau_2-\tau^*_{k,\,\eps}}{\xi}}.
\end{align*}
Similar to the above, we deduce that
\begin{align*}
X_\eps \leq
\sum_{k=1}^{m_{\eps}} \frac{C \eps \abs{\tau_2-\tau_1}^\ga}{\max\{d_*\eps,\, \tau-\tau^*_{k,\,\eps}\}^{\ga+1/2}}\leq {C}\at{
\frac{1}{d_*}+\frac{1}{2d_*}\int_{0}^{\tau_1}\frac{\dint{\si}}{\si^{\ga+1/2}}}\abs{\tau_2-\tau_1}^\ga\leq
{C}\abs{\tau_2-\tau_1}^\ga,
\end{align*}
whereas $Y_\eps$ can be estimated by
\begin{align*}
Y_\eps&\leq \sum_{k=m_{\eps}+1}^{n_{\eps}}G_\eps\pair{\tau_2-\tau^*_{k,\,\eps}}{0}
\leq
\sum_{k=m_{\eps}+1}^{n_{\eps}} G_\eps\bpair{\tau_2 - 2d_*\eps\at{n_{\eps}-k}}{0}
\\& 
\leq \frac{1}{2d_*\eps }\int_{\tau_1}^{\tau_2} G_\eps\pair{\tau_2 - \sigma}{0}\dint\si=
\frac{1}{2d_*\eps }\int_{0}^{\tau_2-\tau_1} G_\eps\pair{\si}{0}\dint\si\\
&\leq
{C}\int_{0}^{\tau_2-\tau_1}\frac{\dint\si}{\si^{1/2}}=C\abs{\tau_2-\tau_1}^{1/2}\leq {C} \abs{\tau_2-\tau_1}^{\ga},
\end{align*}
see Lemma \ref{lem:discr-heat-kernel} and \eqref{eq:props-keps}.
\end{proof}

We conclude this section by showing that both the $\fspaceL^1$-norm of
$R_{1,\,\eps}\pair{\tau}{\cdot}$ and the $\fspaceL^2$-norm of
$\nabla_{\eps} R_{1,\,\eps}(\tau,\,\cdot)$ are bounded uniformly with
respect to $\tau$ and $\eps$, where
\begin{align*}
  \nabla_{\eps} F \at{\xi}:=\frac{F\at{\xi+\eps}-F\at{\xi}}{\eps}
\end{align*}
is the discrete spatial gradient of a function $F$ defined on $\Rset$.
\begin{lemma}[Lebesgue bounds for $R_{1,\,\eps}$ and its discrete gradient]
  \label{lem:lebesgue-r1}
There exists a constant $C$ such that
\begin{align*}
\norm{R_{1,\,\eps}}_{\fspaceL^\infty\at{I;\,\fspaceL^1\at\Rset}}+
\norm{\nabla_{\eps} R_{1,\,\eps}}_{\fspaceL^\infty\at{I;\,\fspaceL^2\at\Rset}}\leq{C}
\end{align*}
holds for all $0\leq\eps\leq\eps_*$.
\end{lemma}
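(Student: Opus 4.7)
The plan is to bound both norms by the triangle inequality applied to the defining sum $R_{1,\,\eps}\pair{\tau}{\xi}=-2\sum_{k=1}^{K_\eps}H_\eps\pair{\tau-\tau^*_{k,\,\eps}}{\xi-\eps k}$, and then to control the resulting series through a Riemann-sum comparison based on the spacing estimate \eqref{eq:props-keps}. For fixed $\tau\in I$ let $m_\eps$ be the largest index with $\tau^*_{m_\eps,\,\eps}\leq\tau$, so that $H_\eps\pair{\tau-\tau^*_{k,\,\eps}}{\cdot}\equiv 0$ for $k>m_\eps$, while \eqref{eq:props-keps} yields $m_\eps\leq \tau_\fin/\at{2d_*\eps}$ and $\tau-\tau^*_{k,\,\eps}\geq 2d_*\eps\at{m_\eps-k}$ for every $k\leq m_\eps$.

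For the $L^1$-bound I would start from the mass conservation $\sum_j g_j\at{t}=1$ of the discrete heat kernel, which translates into $\|G_\eps\pair{\si}{\cdot}\|_{\fspaceL^1\at\Rset}=\eps$. The piecewise definition of $H_\eps$ then gives $\|H_\eps\pair{\si}{\cdot}\|_{\fspaceL^1\at\Rset}\leq \eps$ for every $\si\geq 0$, and the triangle inequality in $\fspaceL^1$ yields
\[
\|R_{1,\,\eps}\pair{\tau}{\cdot}\|_{\fspaceL^1\at\Rset}\leq 2 m_\eps \eps\leq \tau_\fin/d_*,
\]
which is the desired uniform bound.

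The $\fspaceL^2$-bound for $\nabla_\eps R_{1,\,\eps}$ rests on the estimate
\[
\|\nabla_\eps H_\eps\pair{\si}{\cdot}\|_{\fspaceL^2\at\Rset}\leq C\eps\cdot\max\{d_*\eps,\,\si\}^{-3/4}.
\]
For $\si\geq d_*\eps$ one has $H_\eps=G_\eps$, and since the Fourier symbol of $-\laplace$ on $\Zset$ is $\lambda\at\theta=4\sin^2\at{\theta/2}$, Plancherel's identity gives
\[
\sum_j \big|g_{j+1}\at{t}-g_j\at{t}\big|^2=\frac{1}{2\pi}\int_{-\pi}^{\pi}\lambda\at\theta\,\mhexp{-2t\lambda\at\theta}\dint\theta\leq C t^{-3/2},
\]
whose macroscopic reformulation yields the estimate with $\si^{-3/4}$. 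The case $0\leq\si\leq d_*\eps$ is absorbed by the linear prefactor in the definition of $H_\eps$, which introduces an additional factor $\si/\at{d_*\eps}\leq 1$. Applying the triangle inequality in $\fspaceL^2$ and comparing the resulting series to a Riemann sum for $\int_0^\tau\si^{-3/4}\dint\si$ with step $2d_*\eps$ then gives
\[
\|\nabla_\eps R_{1,\,\eps}\pair{\tau}{\cdot}\|_{\fspaceL^2\at\Rset}\leq 2\sum_{k=1}^{m_\eps}\frac{C\eps}{\max\{d_*\eps,\,\tau-\tau^*_{k,\,\eps}\}^{3/4}}\leq C\tau_\fin^{1/4}
\]
uniformly in $\tau$ and $\eps$.

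The only nonroutine step is the integrated gradient estimate for the discrete heat kernel. Unlike the pointwise and H\"older bounds used previously, this one is not immediately available from Lemma~\ref{lem:discr-heat-kernel} or Lemma~\ref{Lem:App.Holder} as formulated; it therefore either has to be added to Appendix~\ref{sec:discrete-heat-kernel} or proved in place by the Plancherel computation above. Everything else---the triangle inequality, the mass-conservation identity for $g$, and the Riemann sum comparison with $\int_0^\tau \si^{-3/4}\dint\si$---has already been used several times in \S\ref{sec:macr-cont-comp}, so the rest of the argument is essentially mechanical.
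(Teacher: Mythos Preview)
Your argument is correct and follows essentially the same route as the paper: triangle inequality on the defining sum, mass conservation of $g$ for the $\fspaceL^1$-bound, the $\ell^2$-gradient decay of $g$ for the $\fspaceL^2$-bound, and the Riemann-sum comparison with $\int_0^\tau\si^{-3/4}\dint\si$ based on \eqref{eq:props-keps}. The one inaccuracy is your claim that the integrated gradient estimate $\sum_j\abs{\nabla_+g_j\at{t}}^2\leq C\at{1+t}^{-3/2}$ is ``not immediately available'' from the appendix: it is precisely the second statement in \eqref{lem:discr-heat-kernel.Eqn4} of Lemma~\ref{lem:discr-heat-kernel}, where it is obtained via discrete integration by parts and $\norm{\laplace g\at{t}}_\infty$ rather than Plancherel. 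Your Fourier computation is a valid alternative, but nothing needs to be added to Appendix~\ref{sec:discrete-heat-kernel}.
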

\begin{proof}
For $\tau\geq\eps d_*$, the properties of the discrete heat kernel, see Lemma \ref{lem:discr-heat-kernel}, imply
\begin{align*}
\int_\Rset H_\eps\pair{\tau}{\xi}\dint\xi=\eps \sum_{j\in\Zset} g_j\at{\tau/\eps^2}=\eps,\qquad
\int_\Rset \Bat{\nabla_\eps H_\eps\pair{\tau}{\xi}}^2\dint\xi=\eps^{-1} \sum_{j\in\Zset} \Bat{\nabla_+ g_j\at{\tau/\eps^2}}^2\leq\frac{\eps^2 C}{\tau^{3/2}},
\end{align*}
and combining this with the definition of $H_\eps$ for all
$\tau\in\Rset$ we find
\begin{align*}
\norm{H_\eps\pair{\tau}{\cdot}}_{\fspaceL^1\at\Rset}\leq\eps,\qquad 
\norm{\nabla_\eps H_\eps\pair{\tau}{\cdot}}_{\fspaceL^2\at\Rset}\leq{C}\left\{\begin{array}{ll}
0&\quad\text{for $\tau<0$},\\
\eps^{1/4}&\quad\text{for $0<\tau<d_*\eps$},\\
\eps \tau^{-3/4}&\quad\text{for $\tau>d_*\eps$}.
\end{array}\right.
\end{align*}
From the first estimate we infer that
\begin{align*}
\norm{R_{1,\,\eps}\pair{\tau}{\cdot}}_{\fspaceL^1\at\Rset}\leq \eps K_\eps \leq \frac{\tau_\fin}{2d_*}
\end{align*}
holds for all $\tau\in{I}$.  We next fix $\tau\in{I}$ and choose an
integer $m_\eps$ such that $\tau^*_{m_\eps,\,\eps} < \tau \leq
\tau^*_{m_\eps+1,\,\eps}$.  As in the first part of the proof of Lemma
\ref{lem:hoelder-est-r1eps}, we estimate
\begin{align*}
\norm{\nabla_\eps R_{1,\,\eps}\pair{\tau}{\cdot}}_{\fspaceL^2\at\Rset}&\leq
\sum_{k=1}^{m_\eps}
\norm{\nabla_\eps H_{\eps}\pair{\tau-\tau^*_{k,\,\eps}}{\cdot}}_{\fspaceL^2\at\Rset}
\\&\leq
C\eps^{1/4}+\sum_{k=1}^{m_\eps-1}\frac{C\eps}{\bat{2kd_*\eps\at{m_\eps-k}}^{3/4}}=C\at{\eps^{1/4}+1},
\end{align*}
and the proof is complete.
\end{proof}


\subsection{Convergence results and verification of limit dynamics}
\label{sec:PassageToLimit}

In view of the compactness results in Lemmas \ref{lem:compactness-xi},
\ref{lem:holder-Q}, and \ref{lem:hoelder-est-r1eps}, we may select a
subsequence of $\eps \to 0$, which we do not relabel, such that
\begin{align}
  \label{eq:conv-subseq}
  \xi^*_\eps \to \xi^* \text{ in } \fspaceL^\infty(I),
  \qquad
  Q_\eps \to Q \text{ in } \fspaceL^\infty(\Omega),
  \qquad
  R_{1,\,\eps} \to R \text{ in } \fspaceL^\infty(\Omega).
\end{align}
As $R_{2,\,\eps} \to 0$ in $\fspaceL^s(\Omega)$ for any $1 \leq {s}<
\infty$ by Lemma~\ref{lem:bounds-R2}, we find
\begin{align}
  \label{eq:conv-subseq-2}
  P_\eps = Q_\eps + R_{1,\,\eps} + R_{2,\,\eps}
  \to
  Q + R = P
  \qquad
  \text{in } \fspaceL^s_\loc(\Omega),
\end{align}
that means, $P$ is the limit of $P_\eps$ in $\fspaceL^s_\loc(\Omega)$
and the limit of $Q_\eps + R_{1,\,\eps}$ in $\fspaceL^\infty(\Omega)$.
Moreover, convergence of $(Q_\eps)_\eps$ implies convergence of the
initial data
\begin{align}
  \label{eq:conv-idata}
  P_\eps(0,\cdot) = Q_\eps(0,\cdot) \to Q(0,\cdot) = P(0,\cdot)
  \qquad
  \text{in } \fspaceL^\infty(\bbR).
\end{align}

\begin{theorem}[limit dynamics along subsequences]
  \label{thm:existence}
  Any limit $(P,Q,R,\xi^*)$ satisfies:
  \begin{enumerate}
  \item $\Xi^* :=
    \set[(\tau,\xi^*(\tau))]{ \tau \in I}$ is a Lipschitz curve in $\Om$;
    the functions $Q$, $R$, and $P=Q+R$ are bounded and locally
    H\"older continuous in $\Omega$; furthermore, $R \in
    \fspaceL^\infty(I; \fspaceL^1(\bbR))$ and $\partial_\xi R \in
    \fspaceL^\infty(I; \fspaceL^2(\bbR))$.
  \item $Q$ is a solution of the heat equation in $\Omega$ with
    initial data $P(0,\cdot)$.
  \item $(P,\xi^*)$ is a distributional solution of
   \begin{equation}
      \label{eq:strong-lim-eq}      
        \partial_\tau P = \partial^2_\xi P
        \;\;\text{in}\;\;\Omega \setminus \Xi^*,\qquad\quad
        \jump{P}=0\;\; \text{and}\;\; 2 \tdiff{\tau} \xi^* = \jump{\partial_\xi P}
        \;\; \text{on}\;\; \Xi^*
    \end{equation}
    with initial data $(P(0,\cdot),\xi^*(0))$ attained in
    $\fspaceL^\infty(\bbR) \times \bbR$. 
    Moreover, we have
    \begin{alignat*}{2}
      P(\tau,\xi) &\geq -1
      &\qquad &\text{ for all } (\tau,\xi) \in \Omega,
      \\
      P(\tau,\xi) &\leq 1
      &\qquad &\text{ if } \xi \geq \xi^*(\tau),
    \end{alignat*}
    which implies $P \in [-1,1]$ on $\Xi^*$, and the movement of the
    interface is determined by
    \begin{alignat*}{2}
      \tdiff{\tau} \xi^*(\tau) &\geq 0
      &\qquad &\text{ for almost all } \tau \in I,
      \\
      \tdiff{\tau} \xi^*(\tau) &= 0
      &\qquad &\text{ if } P(\tau,\xi^*(\tau)) \not=1.
    \end{alignat*}
  \end{enumerate}
\end{theorem}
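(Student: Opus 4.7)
The plan is to split Theorem~\ref{thm:existence} into its four assertions and to treat each by combining the compactness results of \S\ref{sec:macr-cont-comp} with the representation formula of Corollary~\ref{cor:structure-of-p} and the interface identity~\eqref{eq:interface.condition}.

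\textbf{Part (i).} All regularity is inherited from the $\eps>0$ level. Lipschitz continuity and monotonicity of $\xi^*$ are already contained in Lemma~\ref{lem:compactness-xi}. Local H\"older continuity of $Q$, $R_{1,\,\eps}$'s limit $R$, and hence of $P=Q+R$ follows from the uniform H\"older estimates of Lemmas~\ref{lem:holder-Q} and~\ref{lem:hoelder-est-r1eps}, combined with Lemma~\ref{lem:compactness-aux-res} and uniform $\fspaceL^\infty$-bounds. The $\fspaceL^\infty$-bound for $Q_\eps$ comes from the maximum principle for the discrete heat equation, while the one for $R_{1,\,\eps}$ follows from Lemma~\ref{lem:hoelder-est-r1eps} applied with $\tau_1=0$, where $R_{1,\,\eps}=0$. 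The integrability properties of $R$ and $\partial_\xi R$ are inherited from Lemma~\ref{lem:lebesgue-r1} by lower semicontinuity.

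\textbf{Part (ii).} The piecewise-constant $Q_\eps$ is the macroscopic reformulation of the solution to the linear discrete heat equation with initial data $p_j(0)$. Testing the scaled equation $\partial_\tau Q_\eps=\laplace_\eps Q_\eps$ with $\phi\in C_c^\infty(\Omega)$ and passing to the limit via \eqref{eq:conv-subseq} and \eqref{eq:conv-idata} yields $\partial_\tau Q=\partial_\xi^2 Q$ in the distributional sense with the correct initial trace.

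\textbf{Part (iii).} The key microscopic identity is that, for $u\in X_k$, we have $\sgn u_j=+1$ for $j<k$ and $\sgn u_j=-1$ for $j\geq k$, hence $u_j=p_j+\mu_j$ with $\mu_j\in\{+1,-1\}$ locally constant in $t$ between phase transitions. At the macroscopic scale, $U_\eps=P_\eps+\mu_\eps$ with $\mu_\eps(\tau,\xi)=+1$ for $\xi<\xi^*_\eps(\tau)$ and $\mu_\eps(\tau,\xi)=-1$ for $\xi>\xi^*_\eps(\tau)$. Lemma~\ref{lem:compactness-xi} yields $\mu_\eps\to\mu$ in $\fspaceL^1_{\loc}(\Omega)$ with $\mu(\tau,\xi)=\sgn(\xi^*(\tau)-\xi)$. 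Testing $\partial_\tau U_\eps=\laplace_\eps P_\eps$ against $\phi\in C_c^\infty(\Omega)$ and passing to the limit delivers
\begin{align*}
\iint_\Omega \Bat{(P+\mu)\partial_\tau\phi+P\,\partial_\xi^2\phi}\dint\tau\dint\xi
=
-\int_\bbR \bat{P(0,\cdot)+\mu(0,\cdot)}\phi(0,\cdot)\dint\xi,
\end{align*}
that is, $\partial_\tau(P+\mu)=\partial_\xi^2 P$ in $\calD'(\Omega)$. Off the Lipschitz curve $\Xi^*$ the function $\mu$ is locally constant, which yields $\partial_\tau P=\partial_\xi^2 P$ there. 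Along $\Xi^*$, the distributional time derivative computes as $\partial_\tau\mu=2\dot{\xi^*}\,\delta_{\Xi^*}$, and a standard computation with test functions of small transversal width shows that this is absorbed by $\partial_\xi^2 P-\partial_\tau P$ exactly when $\jump{P}=0$ and $2\dot{\xi^*}=\jump{\partial_\xi P}$ hold on $\Xi^*$.

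\textbf{Part (iv).} The bounds $P_\eps\geq -1$ globally and $P_\eps\leq 1$ for $\eps j\geq\xi^*_\eps$ follow directly from the definition of $X_k$: for $j<k$ we have $p_j=u_j-1>-1$, while for $j\geq k$ we have $p_j=u_j+1\in(-1,1)$. Passing to the limit gives $P\geq -1$ on $\Omega$, $P\leq 1$ for $\xi\geq\xi^*(\tau)$, and in particular $P\in[-1,1]$ on $\Xi^*$. Monotonicity $\tdiff{\tau}\xi^*\geq 0$ is contained in Lemma~\ref{lem:compactness-xi}. The main obstacle is the remaining implication $\tdiff{\tau}\xi^*(\tau)=0$ whenever $P(\tau,\xi^*(\tau))\neq 1$. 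We plan to prove its contrapositive: at any $\tau_0$ at which $\tdiff{\tau}\xi^*(\tau_0)>0$, we must have $P(\tau_0,\xi^*(\tau_0))=1$. For such $\tau_0$ the mesoscopic spacing bound from Lemma~\ref{lem:estimate-m_k} combined with $\xi^*_\eps\to\xi^*$ in $\fspaceL^\infty(I)$ guarantees, for every $\eta>0$ and all sufficiently small $\eps$, the existence of indices $k=k(\eps)$ with $|\tau^*_{k,\,\eps}-\tau_0|+|\eps k-\xi^*(\tau_0)|\leq\eta$. Evaluating \eqref{eq:interface.condition} at these points gives $Q_\eps(\tau^*_{k,\,\eps},\eps k)+R_{1,\,\eps}(\tau^*_{k,\,\eps},\eps k)=1$; the uniform convergences in \eqref{eq:conv-subseq} and the uniform H\"older continuity from Lemmas~\ref{lem:holder-Q} and~\ref{lem:hoelder-est-r1eps} allow one to send $\eps\to 0$ and conclude $P(\tau_0,\xi^*(\tau_0))=1$. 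The principal technical subtlety here is that $\xi^*(\tau_0)$ is exactly the point at which $P$ carries its (possible) singular behaviour, but the H\"older bounds of Lemma~\ref{lem:hoelder-est-r1eps} are uniform across $\Xi^*$ and therefore suffice for this final passage to the limit.
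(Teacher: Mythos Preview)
Your proposal is correct and follows the same overall strategy as the paper; Parts (i)--(iii) match the paper's proof closely.

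In Part (iv) you argue the contrapositive (positive derivative at $\tau_0$ forces $P=1$ there via accumulation of phase transition points near $(\tau_0,\xi^*(\tau_0))$), whereas the paper argues directly: starting from a point $(\bar\tau,\bar\xi)\in\Xi^*$ with $P(\bar\tau,\bar\xi)=1-2\delta$, it picks the lattice site $\bar\xi_\eps\in\eps\Zset$ nearest to $\bar\xi$ and the corresponding phase transition time $\bar\tau_\eps$, and then uses the interface identity \eqref{eq:interface.condition} together with uniform convergence of $Q_\eps+R_{1,\eps}$ and the H\"older bound for $P$ to conclude $\delta\leq C|\bar\tau_\eps-\bar\tau|^\gamma$, so that $\bar\tau_\eps$ stays bounded away from $\bar\tau$. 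This yields a limit time $\bar\tau_0<\bar\tau$ with $\xi^*(\bar\tau_0)=\xi^*(\bar\tau)$, hence local constancy of $\xi^*$ to the left of $\bar\tau$; an analogous argument with $\hat\xi_\eps=\bar\xi_\eps+\eps$ handles the right side. The paper's version is marginally stronger in that it shows $\xi^*$ is actually constant in a full neighborhood of every such $\bar\tau$ (so the derivative exists and vanishes there, not just at a.e.\ such point), but both arguments rest on the same device---matching discrete phase transition points to the identity \eqref{eq:interface.condition} and passing to the limit via uniform convergence---and either suffices for the stated theorem and for the subsequent uniqueness proof. Note also that the mesoscopic spacing bound from Lemma~\ref{lem:estimate-m_k} is not really needed for your accumulation argument; the uniform convergence $\xi^*_\eps\to\xi^*$ and $\tdiff{\tau}\xi^*(\tau_0)>0$ alone force $\tau^*_{k_\eps+1,\eps}\to\tau_0$ for $k_\eps$ defined by $\xi^*_\eps(\tau_0)=\eps k_\eps$.
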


\begin{remark}
  \label{rem:xi-and-mu}
  Being a distributional solution of \eqref{eq:strong-lim-eq} means
  \begin{align}
    \label{eq:distr-lim-eq}
    - \int_0^{\tau_\fin} \int_\bbR
    \left( P + \mu \right) \partial_\tau \psi \dint\xi \dint\tau
    =
    \int_0^{\tau_\fin} \int_\bbR
    P \,\partial^2_\xi \psi \dint\xi \dint\tau,
    \qquad
    \psi \in \fspaceC^\infty_c( (0,\tau_\fin) \times \bbR ),
  \end{align}
  where $\mu(\tau,\xi) = \sgn\left(\xi^*(\tau)-\xi\right)$. In the
  following we will use $\xi^*$ and $\mu$ interchangeably to represent
  the solution, whichever is more convenient.
\end{remark}

\begin{proof}[Proof of Theorem~\ref{thm:existence}]
  The continuity properties of $\xi^*$ and $Q$ are immediate
  consequences of Lemma~\ref{lem:compactness-xi} and
  Lemma~\ref{lem:holder-Q}; Lemma~\ref{lem:compactness-xi} also yields
  $\tdiff{\tau} \xi^* \geq 0$. H\"older continuity of $P$ and the
  claims for $R$ follow from $P = Q+R$ and the bounds on $R_{1,\,\eps}$
  proved in Lemmas~\ref{lem:hoelder-est-r1eps} and
  \ref{lem:lebesgue-r1}.

  Setting $\mu_\eps(\tau,\xi) = \sgn \left(\xi_\eps^*(\tau)-\xi\right)
  = \sgn U_\eps(\tau,\xi)$, we write the equation for $U_\eps = P_\eps
  + \mu_\eps$ in distributional form as
  \begin{align*}
    - \int_0^{\tau_\fin} \int_\bbR
    \left( P_\eps + \mu_\eps \right) \partial_\tau \psi \dint\xi \dint\tau
    =
    \int_0^{\tau_\fin} \int_\bbR
    P_\eps \laplace_\eps \psi \dint\xi \dint\tau,
    \qquad
    \psi \in \fspaceC^\infty_c((0,\tau_\fin)\times\bbR)
  \end{align*}
  and deduce \eqref{eq:distr-lim-eq} in the limit $\eps \to 0$.
  Similarly, $Q$ solves the heat equation, and both $P$ and $Q$
  attain their initial data in $\fspaceL^\infty(\bbR)$ 
  due to \eqref{eq:conv-idata} and continuity of  $Q$.
  \par
  By construction, the discrete solutions satisfy $P_\eps \geq -1$ in
  $\Omega$ and $P_\eps \leq 1$ in $\set[(\tau,\xi) \in \Omega]{\xi
    \geq \xi^*_\eps(\tau)}$ for all $\eps>0$, and in the limit $\eps\to0$ we obtain the
  corresponding inequalities for $P$ and $\xi^*$.
  In particular, we have $P(\tau,\xi^*(\tau)) \in [-1,1]$ for all $\tau
  \in I$.

  It remains to check that $P(\tau,\xi^*(\tau))<1$ implies
  $\tdiff{\tau} \xi^*(\tau)=0$. To this end, let
  $(\bar\tau,\bar\xi)\in{\Xi^*}$ and $\delta>0$ be given such that
  $P(\bar\tau,\bar\xi) = 1-2\delta$, and suppose at first that
  $\bar\xi<\xi^*\at{\tau_\fin}$.  For any $\eps$ choose
  $\bar\xi_\eps\in\eps\Zset$ such that $\bar\xi_\eps\leq
  \bar\xi\leq\bar\xi_\eps+\eps$ and denote by $\bar\tau_\eps$ the
  phase transition time corresponding to $\bar\xi_\eps$. See the left
  panel of Figure \ref{Fig:sign_app} for an illustration and notice
  that $\bar\tau_\eps<\tau_\fin$ because otherwise the interface
  position would be maximal via $\bar\xi=\xi^*\at{\tau_\fin}$.
  H\"older continuity of $P$ now implies
  \begin{align*}
    P(\bar\tau_\eps,\bar\xi_\eps) - 1 + 2\delta
    \leq
    |P(\bar\tau_\eps,\bar\xi_\eps) - P(\bar\tau,\bar\xi)|
    \leq
    C\bat{ |\bar\tau_\eps-\bar\tau|^\gamma +\eps^{\ga}}
  \end{align*}
  for some exponent $0<\ga<1$, while uniform convergence of
  $Q_\eps+R_{1,\,\eps} \to P$ as $\eps\to0$ and
  $(Q_\eps+R_{1,\,\eps})(\bar\tau_\eps,\bar\xi_\eps) = 1$, see
  \eqref{eq:interface.condition}, yield
  \begin{align*}
    P(\bar\tau_\eps,\bar\xi_\eps)
    \geq
    1 - o(1)_{\eps\to0}.
  \end{align*}
  We thus find $\delta \leq C |\bar\tau_\eps-\bar\tau|^\gamma$ for all
  sufficiently small $\eps>0$ and may select a subsequence of
  $\eps\to0$ such that $\bar\tau_\eps \to \bar\tau_0$ and 
  $\bar\tau_0<\bar\tau$. The uniform convergence $\xi^*_\eps\to\xi^*$
  implies
  \begin{align*}
    \xi^*(\bar\tau_0)= \lim_{\eps\to0}
  \xi_\eps^*(\bar\tau_\eps) = \lim_{\eps\to0} \bar\xi_\eps = \xi^*(\bar\tau),
  \end{align*}
  and $\tdiff{\tau}\xi^*\geq 0$ ensures that $\xi^*$ is constant in
  $[\bar\tau_0,\bar\tau]$. By a similar argument using $\hat\xi_\eps
  =\bar\xi_\eps+\eps$ and the corresponding phase transition time
  $\hat\tau_\eps$ we finally conclude that $\bar\tau$ is a regular
  point of $\xi^*$ and $\tdiff{\tau} \xi^*(\bar\tau)=0$.  Moreover, in
  case of $\bar\tau<\tau_\fin$ and $\bar\xi=\xi^*\at{\tau_\fin}$ we
  find that $\xi^*$ is constant on $[\bar\tau,\tau_\fin]$,
  and for $\bar\tau=\tau_\fin$ we can repeat the above reasoning after
  enlarging the time interval slightly beyond $\tau_\fin$.
\end{proof}

\begin{figure}[ht!]%
  \centering
  \includegraphics[width=0.45\textwidth]{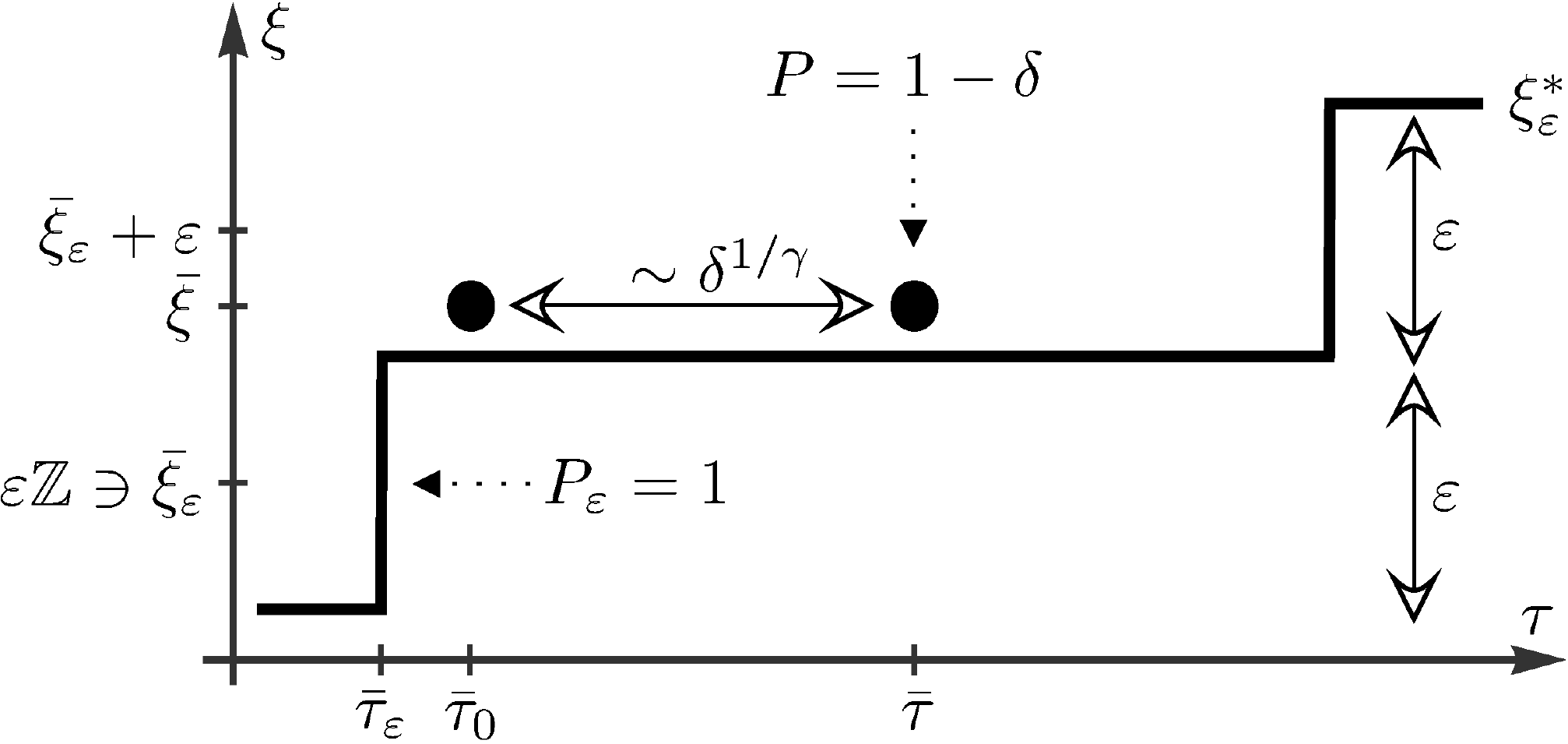}%
  \hspace{0.05\textwidth}%
  \includegraphics[width=0.45\textwidth]{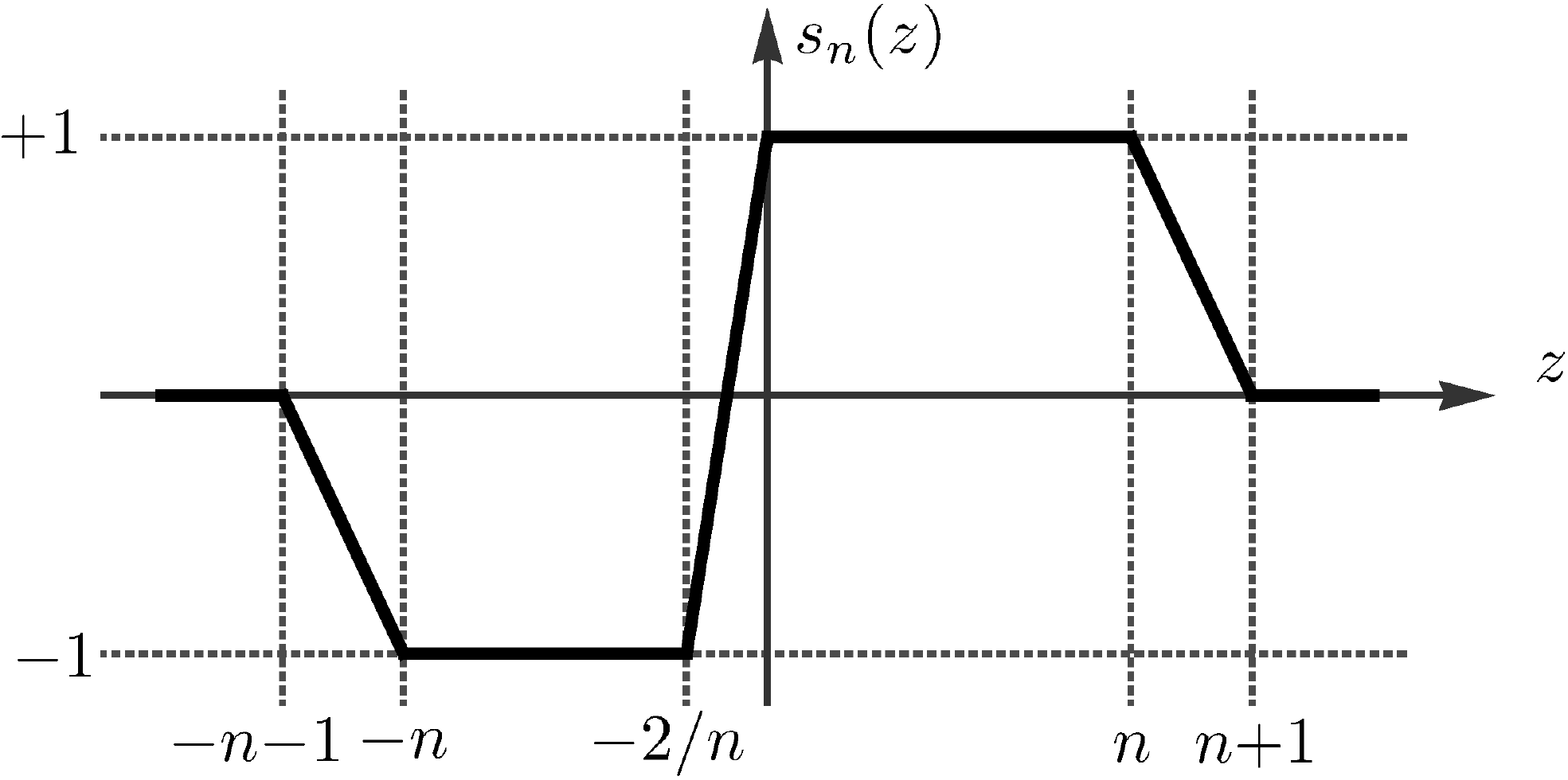}%
  \caption{%
    \emph{Left.} Illustration of the key argument in the proof of
    Theorem \ref{thm:existence}: By construction, we have
    $\bar\xi=\xi^*\nat{\bar\tau}=\xi^*\nat{\bar{\tau}_0}$ and
    $\tfrac{\dint{}}{\dint{\tau}}\xi^*\geq0$, so $\xi^*$ is constant
    on $[\bar{\tau}_0,\bar\tau]$.  \emph{Right.}
    Approximation $s_n$ of the sign function used in the proof of
    Theorem \ref{thm:uniqueness} for the case $\xi^*_1\geq\xi^*_2$;
    for $\xi^*_1\leq\xi^*_2$ one has to redefine $s_n$ such that
    $s_n\at{0}=-1$.}%
\label{Fig:sign_app}%
\end{figure}%

We complement Theorem \ref{thm:existence} with a uniqueness result by
adapting some techniques for hysteresis problems from
\cite{Hilpert89,Visintin06}.

\begin{theorem}[well-posedness of the limit problem]
  \label{thm:uniqueness}
  The solution to the limit problem in Theorem~\ref{thm:existence} is
  uniquely determined by the initial data $P(0,\cdot)$and $\xi^*(0)$.
\end{theorem}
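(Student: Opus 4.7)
The plan is to adapt the $\fspaceL^1$-contraction technique of Hilpert \cite{Hilpert89} for Stefan-type problems with hysteresis, see also \cite{Visintin06}. Let $(P_1,\xi^*_1)$ and $(P_2,\xi^*_2)$ be two solutions of the limit model with the same initial data, and set $\mu_i\pair{\tau}{\xi}=\sgn\at{\xi^*_i\at{\tau}-\xi}$. The distributional identity \eqref{eq:distr-lim-eq} applied to the difference reads
\begin{align*}
\partial_\tau\bat{\at{P_1-P_2}+\at{\mu_1-\mu_2}}=\partial_\xi^2\at{P_1-P_2},
\end{align*}
and the goal is to establish the one-sided contraction estimate
\begin{align*}
\frac{\dint}{\dint\tau}\int_\Rset\Bat{\at{P_1-P_2}^++\at{\mu_1-\mu_2}^+}\dint\xi\leq 0.
\end{align*}
Together with equality of the initial data this yields $P_1\leq P_2$ and $\mu_1\leq\mu_2$, while the reversed inequalities follow by swapping the two solutions, so that $P_1=P_2$ and $\xi^*_1=\xi^*_2$.

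To derive the contraction we test the difference equation with $s_n\at{P_1-P_2}\phi$, where $\phi\in\fspaceC^\infty_c\bat{\oointerval{0}{\tau_\fin}\times\Rset}$ is nonnegative and $s_n$ is a smooth nondecreasing approximation of $\sgn$ with $s_n\at{0}=+1$, as sketched in the right panel of Figure~\ref{Fig:sign_app}. Two of the three resulting terms are standard: a chain-rule argument rewrites the $\partial_\tau\at{P_1-P_2}$-contribution as the time derivative of $\int S_n\at{P_1-P_2}\phi\dint\xi$, where $S_n^\prime=s_n$ and $S_n\to\at{\cdot}^+$, while two integrations by parts combined with $s_n^\prime\geq 0$ give the Kato-type inequality
\begin{align*}
\int s_n\at{P_1-P_2}\partial_\xi^2\at{P_1-P_2}\phi\dint\xi\leq\int S_n\at{P_1-P_2}\partial_\xi^2\phi\dint\xi,
\end{align*}
which in the limit $n\to\infty$ delivers the expected second-order term for $\at{P_1-P_2}^+$.

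The decisive step is the Hilpert-type inequality for the hysteretic contribution, namely
\begin{align*}
\liminf_{n\to\infty}\int s_n\at{P_1-P_2}\,\partial_\tau\at{\mu_1-\mu_2}\,\phi \geq -\int\at{\mu_1-\mu_2}^+\partial_\tau\phi.
\end{align*}
The flow rule from Theorem~\ref{thm:existence} makes this possible: each $\partial_\tau\mu_i$ is a signed measure supported on the Lipschitz graph $\Xi^*_i$ with density proportional to $-\tdiff{\tau}\xi^*_i\leq 0$, and at any point of $\Xi^*_i$ where this density is strictly negative we must have $P_i=+1$. On the support of $\partial_\tau\mu_1$ this yields $P_1-P_2=1-P_2\geq 0$ by the universal bound $P_2\leq+1$, hence $s_n\at{P_1-P_2}\to+1$; on the support of $\partial_\tau\mu_2$ one analogously finds $P_1-P_2\leq 0$ and $s_n\at{P_1-P_2}\leq s_n\at{0}=+1$, which is exactly what is needed for the inequality to hold with the correct sign. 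Adding the three contributions and letting $n\to\infty$ delivers the desired differential inequality.

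The hard part will be making the Hilpert step rigorous: both $\xi^*_i$ are only Lipschitz, the fields $P_i$ are only H\"older continuous near $\Xi^*_i$, and the two graphs $\Xi^*_1$ and $\Xi^*_2$ may touch, separate, and recombine, so that cancellations between $\partial_\tau\mu_1$ and $\partial_\tau\mu_2$ on their coincidence set must be accounted for carefully. I plan to address this by disintegrating each measure $\partial_\tau\mu_i$ along its graph and by approximating the Lipschitz curves $\xi^*_i$ by smooth ones, for which the argument above can be carried out pointwise in time; the uniform Lipschitz bound on $\xi^*_i$ and the H\"older regularity of $P_i$ from \S\ref{sec:macr-cont-comp} then justify passage to the limit in both approximations.
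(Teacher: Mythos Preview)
Your overall strategy---the Hilpert $\fspaceL^1$-contraction---is also the paper's, but the decisive Hilpert step contains a real gap. You invoke a ``universal bound $P_2\leq+1$'', yet Theorem~\ref{thm:existence} only asserts $P_2\pair{\tau}{\xi}\leq 1$ for $\xi\geq\xi^*_2\at\tau$; behind its own interface a solution may well exceed $1$. Hence, on the support of $\partial_\tau\mu_1$ you can conclude $P_1-P_2\geq 0$ only when $\xi^*_1\at\tau\geq\xi^*_2\at\tau$, while the companion claim on the support of $\partial_\tau\mu_2$ requires the opposite ordering. When the interfaces are strictly ordered one of the two inequalities fails, and in fact a direct check shows that your one-sided Hilpert inequality is violated at points of $\Xi^*_1$ with $\tdiff{\tau}\xi^*_1>0$, $\xi^*_1<\xi^*_2$, and $P_2<1$ there. (A related slip: if $s_n$ approximates $\sgn$ as you say, then $S_n\to\abs{\cdot}$, not $\at{\cdot}^+$.)

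The paper circumvents this by dropping the one-sided contraction and proving instead that $\tau\mapsto\norm{\widebar{P}\pair{\tau}{\cdot}}_{\fspaceL^1\at\Rset}+2\abs{\xi^*_1\at\tau-\xi^*_2\at\tau}$ is nonincreasing. It works on time intervals where $\xi^*_1-\xi^*_2$ has a fixed sign, tests with $\eta=s_n\at{\widebar{P}\pair{\tau}{\cdot}}$ approximating $\sgn$, and---this is the key asymmetry---chooses $s_n\at{0}=+1$ when $\xi^*_1\geq\xi^*_2$ but $s_n\at{0}=-1$ when $\xi^*_1\leq\xi^*_2$. With the ordering fixed, the bound $P_2\leq 1$ at $\xi^*_1$ (respectively $P_1\leq 1$ at $\xi^*_2$) becomes legitimate, and both terms in $\tdiff{\tau}\int\widebar{\mu}\,\eta\dint\xi=2\eta\at{\xi^*_1}\tdiff{\tau}\xi^*_1-2\eta\at{\xi^*_2}\tdiff{\tau}\xi^*_2$ have the correct sign. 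The two regimes are then glued using continuity of $\tau\mapsto\norm{\widebar{P}\pair{\tau}{\cdot}}_{\fspaceL^1}$. A second point you do not address, and which the paper handles first, is why $\widebar{P}\in\fspaceL^\infty\at{I;\fspaceL^1\at\Rset}\cap\fspaceL^\infty\at{I;\fspaceH^1\at\Rset}$ at all: this uses the decomposition $P_i=Q_i+R_i$ and uniqueness for the heat equation to get $Q_1=Q_2$, whence $\widebar{P}=R_1-R_2$ inherits the integrability of Lemma~\ref{lem:lebesgue-r1}; the same step yields $\partial_\tau\widebar{P}\in\fspaceL^\infty\at{I;\fspaceH^{-1}\at\Rset}$ and thus $\widebar{P}\in\fspaceC\at{I;\fspaceL^1\at\Rset}$, which is precisely what makes the gluing work.
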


\begin{proof}
  Given two solutions $(P_1,Q_1,R_1,\xi^*_1)$ and
  $(P_2,Q_2,R_2,\xi^*_2)$ with initial data
  $P_1(0,\cdot)=P_2(0,\cdot)$ and $\xi^*_1\at{0}=\xi^*_2\at{0}$, we
  set $\widebar{P} = P_1-P_2$ and $\widebar{\mu} = \mu_1-\mu_2$, where
  $\mu_i(\tau,\xi) = \sgn(\xi^*_i(\tau)-\xi)$ as in Remark
  \ref{rem:xi-and-mu}.
  In  order to show $\widebar{P} = \widebar{\mu} = 0$ we follow 
  the strategy of \cite[Theorem 5]{Hilpert89}, that means
  we first establish sufficient regularity in time and derive
  afterwards an $\fspaceL^1$-contraction inequality by testing
  the equation for $\widebar{P}$ with $\sgn \widebar{P}$.

  \underline{\emph{Regularity in time}}:
  Standard uniqueness results for the heat equation imply $Q_1 = Q_2$,
  and we find $\widebar{P} = R_1 -R_2 \in
  \fspaceL^\infty(I; \fspaceL^1(\bbR))$ and $\partial_\xi \widebar{P}
  \in \fspaceL^\infty(I; \fspaceL^2(\bbR))$ in addition to boundedness
  and continuity.
  Furthermore, $\widebar{\mu}(\tau,\xi)$ is bounded, and it is nonzero
  only if $\xi$ lies between $\xi^*_1(\tau)$ and $\xi^*_2(\tau$). We
  thus conclude
  \begin{align}
    \label{eq:integrability}
    \widebar{P} \in \fspaceL^\infty(I; \fspaceL^1(\bbR)) \cap
    \fspaceL^\infty(I; \fspaceH^1(\bbR))
    \qquad\text{and}\qquad
    \widebar{\mu} \in \fspaceL^\infty(I; \fspaceL^2(\bbR)).
  \end{align}
  In view of \eqref{eq:integrability}, we may integrate by parts after
  subtracting the equations for $(P_1,\mu_1)$ and $(P_2,\mu_2)$ from
  each other, which gives
  \begin{align*}
    \int_0^{\tau_\fin} \partial_\tau \phi(\tau)
    \int_\bbR 
    \big( \widebar{P}+\widebar{\mu} \big)(\tau,\xi)
    \,\eta(\xi) \dint\xi \dint\tau
    =
    \int_0^{\tau_\fin} \phi(\tau) \int_\bbR
    \partial_\xi \widebar{P}(\tau,\xi)
    \,\partial_\xi \eta(\xi) \dint\xi \dint\tau
  \end{align*}
  for all
  $\phi \in \fspaceC^\infty_c(I)$, $\eta \in \fspaceC^\infty_c(\bbR)$,
  and by density also for all
  $\phi \in \fspaceH^1_0(I)$, $\eta \in
  \fspaceH^1(\bbR)$. Consequently,
  \begin{align}
    \label{eq:diff-eq-deriv}
    \diff{\tau} \int_\bbR
    \big( \widebar{P}(\tau,\xi) + \widebar{\mu}(\tau,\xi) \big)
    \,\eta(\xi) \dint\xi
    =
    - \int_\bbR \partial_\xi \widebar{P}(\tau,\xi)
    \,\partial_\xi \eta(\xi) \dint\xi
  \end{align}
  for all $\tau \in I$.
  A direct computation shows that
  \begin{align}
    \label{eq:mu-deriv}
    \diff{\tau}
    \int_\bbR \widebar{\mu}(\tau,\xi) \, \eta(\xi) \dint\xi
    =
    2 \eta(\xi^*_1(\tau)) \, \tdiff{\tau} \xi^*_1(\tau)
    - 2 \eta(\xi^*_2(\tau)) \, \tdiff{\tau} \xi^*_2(\tau)
  \end{align}
  for all $\tau\in{I}$ where $\tdiff{\tau} \xi^*_1(\tau)$ and
  $\tdiff{\tau} \xi^*_2(\tau)$ are defined, and the right hand side of
  \eqref{eq:mu-deriv} can easily be bounded by $C
  \|\eta\|_{\fspaceH^1(\bbR)}$, where the constant $C$ depends on
  $\|\xi^*_j\|_{\fspaceW^{1,\infty}(I)}$, $j=1,2$.
  Thus, $\partial_\tau \widebar{P}(\tau,\cdot)$ exists in
  $\fspaceH^1(\bbR)$ and
  \begin{align*}
    \left| \left< \partial_\tau \widebar{P}(\tau,\cdot), \eta \right> \right|
    =
    \left| \tdiff{\tau} \int_\bbR \widebar{P}(\tau,\xi)
      \,\eta(\xi) \dint\xi \right|
    \leq
    C \left(
      \|\partial_\xi \widebar{P}\|_{\fspaceL^\infty(I; \fspaceL^2(\bbR))} + 1
    \right)
    \| \eta \|_{\fspaceH^1(\bbR)}.
  \end{align*}
  By standard embedding results, see for instance \cite[Thm.~3 in
  Sec.~5.9]{Evans98} and note that $I$ is bounded, $\partial_\tau
  \widebar{P} \in \fspaceL^\infty(I; \fspaceH^{-1}(\bbR))$ and
  $\widebar{P} \in \fspaceL^\infty(I;\fspaceH^1(\bbR))$ imply
  $\widebar{P} \in \fspaceC(I;L^2(\bbR))$, and with
  \eqref{eq:integrability} we conclude $\widebar{P} \in
  \fspaceC(I;L^1(\bbR))$.

  \underline{\emph{Contraction inequality}}:
  Given $\tau \in [\tau_1,\tau_2]$, where $0 \leq \tau_1<\tau_2 \leq
  \tau_\fin$ such that $\xi^*_1 \geq \xi^*_2$ in $[\tau_1,\tau_2]$, we
  approximate the sign function by
  \begin{align*}
    s_n(z) =
    \begin{cases}
      \max(-1,\min(1,1+nz))
      &\text{if } |z| \leq n, \\
      n + \sgn z - |z|
      &\text{if } n < |z| \leq n+1, \\
      0
      &\text{otherwise,}
    \end{cases}
  \end{align*}
  see Figure \ref{Fig:sign_app} for an illustration. In what follows
  we suppose $n > \| \widebar{P} \|_\infty$ and consider $\eta =
  s_n(\widebar{P}(\tau,\cdot)) \in \fspaceH^1(\bbR)$ in
  \eqref{eq:diff-eq-deriv}--\eqref{eq:mu-deriv}.
  Due to $\widebar{P}(\tau,\xi^*_1(\tau)) \geq 0$ if $\tdiff{\tau}
  \xi^*_1(\tau)>0$ and $s_n(0) = 1$, we then find
  \begin{equation*}
    \eta(\xi^*_1(\tau))  \tdiff{\tau} \xi^*_1(\tau)
    \geq
    \tdiff{\tau} \xi^*_1(\tau),
  \end{equation*}
  while $s_n \leq 1$ and $\tdiff{\tau} \xi^*_2(\tau) \geq 0$ imply
  \begin{equation*}
    - \eta(\xi^*_2(\tau))  \tdiff{\tau} \xi^*_2(\tau)
    \geq
    - \tdiff{\tau} \xi^*_2(\tau).
  \end{equation*}
  Hence, we obtain the Hilpert estimate
  \begin{align*}
    \eta(\xi^*_1(\tau)) \tdiff{\tau} \xi^*_1(\tau)
    -
    \eta(\xi^*_2(\tau)) \tdiff{\tau} \xi^*_2(\tau)
    \geq
    \tdiff{\tau} \xi^*_1(\tau) - \tdiff{\tau} \xi^*_2(\tau),
  \end{align*}
  and as moreover the right hand side of \eqref{eq:diff-eq-deriv} is
  nonpositive, we infer
  \begin{align}
    \label{eq:diff-contract}
    \left< \partial_\tau \widebar{P}(\tau,\cdot),
      s_n(\widebar{P}(\tau,\cdot)) \right>
    +
    2 \left( \tdiff{\tau} \xi^*_1(\tau) - \tdiff{\tau} \xi^*_2(\tau) \right)
    \leq
    0.
  \end{align}
  Using 
  \begin{align*}
    \left< \partial_\tau \widebar{P}(\tau,\cdot),
    s_n(\widebar{P}(\tau,\cdot)) \right>
    =
    \diff{\tau} \int_\bbR S_n(\widebar{P}(\tau,\xi)) \dint\xi,
  \end{align*}
  where $S_n'(z) = s_n(z)$ and $S_n(0) = 0$, we next integrate
  \eqref{eq:diff-contract} from $\tau_1$ to $\tau_2$ and arrive at
  \begin{align*}
    \int_\bbR S_n(\widebar{P}(\tau_2,\xi)) \dint\xi
    +
    2 \, |\xi^*_1(\tau_2) - \xi^*_2(\tau_2)|
    \leq
    \int_\bbR S_n(\widebar{P}(\tau_1,\xi)) \dint\xi
    +
    2 \, |\xi^*_1(\tau_1) - \xi^*_2(\tau_1)|.
  \end{align*}
  By construction, $S_n(\widebar{P}(\tau,\cdot))$ converges to
  $|\widebar{P}(\tau,\cdot)|$ in $\fspaceL^1(\bbR)$ as $n \to \infty$,
  and passing to the limit yields the desired inequality
  \begin{align}
    \label{eq:contraction}
    \| \widebar{P}(\tau_2,\cdot)\|_{\fspaceL^1(\bbR)}
    +
    2 \, |\xi^*_1(\tau_2) - \xi^*_2(\tau_2)|
    \leq
    \| \widebar{P}(\tau_1,\cdot)\|_{\fspaceL^1(\bbR)}
    +
    2 \, |\xi^*_1(\tau_1) - \xi^*_2(\tau_1)|
  \end{align}
  in the case of $\xi^*_1 \geq \xi^*_2$ in $[\tau_1,\tau_2]$. Moreover, 
  for $\xi^*_1 \leq \xi^*_2$ in $[\tau_1,\tau_2]$
  we derive \eqref{eq:contraction} by repeating the above arguments
  with $s_n(z) = \max(-1,\min(1,-1 + n z))$ for $|z| \leq n$, which
  satisfies $s_n(0)=-1$.
  Combining both cases and continuity of
  $\| \widebar{P}(\tau,\cdot)\|_{\fspaceL^1(\bbR)}$ we finally obtain
  \begin{align*}
  \norm{\widebar{P}}_{\fspaceL^\infty(I; \fspaceL^1(\bbR)) }+2\norm{\xi^*_1-\xi^*_2}_{\fspaceL^\infty(I)}\leq
  \norm{\widebar{P}(0,\cdot)}_{L^1(\bbR)} + 2\abs{\xi^*_1(0)-\xi^*_2(0)},
  \end{align*}
  so uniqueness follows from $\widebar{P}(0,\cdot)=0$ and $\xi_1^*(0)=\xi_2^*(0)$.
    \end{proof}

As a consequence of Theorems~\ref{thm:existence}
and~\ref{thm:uniqueness} we obtain the following approximation result.

\begin{corollary}[uniqueness and improved convergence]
  If $P_\eps(0,\cdot) \to P(0,\cdot)$ in $\fspaceL^\infty(\bbR)$ as
  $\eps \to 0$, the limit $(P,Q,R,\xi^*)$ in Theorem
  \ref{thm:existence} is unique and the convergence
  \eqref{eq:conv-subseq}--\eqref{eq:conv-subseq-2} holds along the
  whole family $\eps \to 0$.
\end{corollary}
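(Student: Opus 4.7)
The plan is to combine Theorem \ref{thm:existence} with Theorem \ref{thm:uniqueness} via the standard metric-space subsequence principle: any subsequential limit must solve the limit problem with a prescribed set of initial data and must therefore coincide with the unique solution to that problem; consequently the entire family converges to this distinguished limit.

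The first step is to pin down the initial data of every subsequential limit. By construction $\xi^*_\eps(0) = 0$ for every $\eps > 0$, since the sum defining $\xi^*_\eps$ is empty on $[0,\tau^*_{1,\eps})$; by the definition of $H_\eps$ the function $R_{1,\eps}(0,\cdot)$ vanishes identically, and by Lemma \ref{lem:bounds-R2} so does $R_{2,\eps}(0,\cdot)$, so $Q_\eps(0,\cdot) = P_\eps(0,\cdot)$. Hence the hypothesis $P_\eps(0,\cdot) \to P(0,\cdot)$ in $\fspaceL^\infty(\bbR)$ fixes the initial datum for $Q$ and for $P$, while $\xi^*(0) = 0$ is fixed by uniform convergence of $\xi^*_\eps$ together with continuity of $\xi^*$ from Lemma \ref{lem:compactness-xi}.

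Given any sequence $\eps_n \to 0$, I would apply the compactness results of Lemmas \ref{lem:compactness-xi}, \ref{lem:holder-Q}, \ref{lem:hoelder-est-r1eps}, and \ref{lem:bounds-R2} to extract a subsequence along which the convergences \eqref{eq:conv-subseq}--\eqref{eq:conv-subseq-2} hold with some limit quadruple $(P,Q,R,\xi^*)$. Theorem \ref{thm:existence} then certifies that this quadruple solves the limit free-boundary problem, and by the previous paragraph its initial data are the \emph{same} prescribed pair $(P(0,\cdot), 0)$ regardless of which sub-subsequence was chosen. Theorem \ref{thm:uniqueness} therefore forces every sub-subsequential limit to coincide with a single quadruple $(P^\star, Q^\star, R^\star, \xi^{*\star})$.

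The proof is then closed by the elementary metric-space fact: if every subsequence of a sequence in a metrizable space admits a further subsequence converging to one and the same point $L$, then the whole sequence converges to $L$. Applied separately in the four target spaces $\fspaceL^\infty(I)$, $\fspaceL^\infty(\Om)$, $\fspaceL^\infty(\Om)$, and $\fspaceL^s_\loc(\Om)$ (all metrizable, the last one as a Fr\'echet space), this upgrades the subsequential statements to convergence along the entire family $\eps \to 0$. I do not expect a genuine obstacle here: everything past the identification of initial data is pure bookkeeping on top of the already-established machinery, and the only potentially delicate point, namely that the prescribed data determine the limit uniquely, is exactly the content of Theorem \ref{thm:uniqueness}.
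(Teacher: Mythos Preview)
Your proposal is correct and matches the paper's approach: the paper states the corollary without proof, simply noting that it follows as a consequence of Theorems \ref{thm:existence} and \ref{thm:uniqueness}, and you have filled in precisely the standard subsequence-principle argument that this phrasing implies. The only detail worth flagging is that the uniqueness theorem is stated for the pair $(P,\xi^*)$, so you should also note explicitly (as you essentially do) that $Q$ is then pinned down as the unique heat-equation solution with initial datum $P(0,\cdot)$ and that $R=P-Q$ follows; but this is routine and poses no obstacle.
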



\appendix

\section{The discrete heat kernel}
\label{sec:discrete-heat-kernel}
Denoting by $\widehat~$ the Fourier transform with respect to the discrete spatial variable $j$, that is
\begin{align*}
  \widehat g(t,k) = \sum_{j \in \bbZ} g_j(t)  e^{-\imath k j},
  \qquad k \in [-\pi,\pi),
\end{align*}
the initial value problem \eqref{eq:ap:heat-eq} for the discrete heat kernel transforms into
\begin{align}
  \label{eq:ap:FourierTransformedEqn}
  \partial_{t}\widehat{g}(t,k) &= \rho(k) \widehat{g}(t,k),
  \qquad \widehat{g}(0,k)=1,
\end{align}
where $\rho$ is the Fourier symbol of the negative discrete Laplacian, i.\,e.
\begin{align*}
\rho(k) = 2-e^{-\imath k }-e^{+\imath k}=2 (1-\cos k).
\end{align*}
Solving the parametrized ODE \eqref{eq:ap:FourierTransformedEqn} and applying 
the inverse Fourier transform we find
\begin{align}
  \label{eq:ap:discr-heat-kernel}
  g_j(t)
  =
  \frac{1}{2\pi} \int_{-\pi}^{+\pi} \widehat g(t,k) e^{\imath k j} \dint{k}
  =
  \frac{1}{2\pi} \int_{-\pi}^{+\pi} \exp\at{-\rho(k) t} \cos\at{jk} \dint{k}.
\end{align}
\begin{lemma}[monotonicity and convexity properties for $j=0$]
\label{lem:discr-heat-kernel-monotonicty}
\quad
\begin{enumerate}
\item $t\mapsto g_0\at{t}$ is strictly positive, strictly decreasing,
  and strictly convex.
\item $t\mapsto \int_0^t g_0\at{s}\dint{s}$ is strictly positive,
  strictly increasing, and strictly concave.
\item $t\mapsto\dot{g}_0\at{t}$ is strictly negative, strictly
  increasing, and strictly concave.
\end{enumerate}
\end{lemma}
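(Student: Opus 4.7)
The plan is to read off all three statements from a single observation: the Fourier representation \eqref{eq:ap:discr-heat-kernel} at $j=0$ reads
\begin{equation*}
  g_0(t) = \frac{1}{2\pi} \int_{-\pi}^{+\pi} e^{-\rho(k)t}\dint{k},
\end{equation*}
which is the Laplace transform of a positive finite measure on $[0,4]$ obtained by pushing forward $\tfrac{1}{2\pi}\mathrm{d}k$ under the map $k\mapsto\rho(k)=2(1-\cos k)$. All derivatives can therefore be computed by differentiating under the integral sign.

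First I would observe that $\rho(k)\geq 0$ with equality only at $k=0$, so $\rho(k)>0$ for almost every $k\in[-\pi,\pi)$. By dominated convergence one may differentiate repeatedly and obtain
\begin{equation*}
  g_0^{(n)}(t) = \frac{(-1)^n}{2\pi}\int_{-\pi}^{+\pi} \rho(k)^n e^{-\rho(k)t}\dint{k}
  \qquad\text{for all }n\in\bbN,\; t\geq 0.
\end{equation*}
Since the integrand $\rho(k)^n e^{-\rho(k)t}$ is nonnegative and strictly positive on a set of positive measure, each of these integrals is strictly positive. Hence $g_0(t)>0$, $\dot g_0(t)<0$, $\ddot g_0(t)>0$, and $\dddot g_0(t)<0$ for every $t\geq 0$, which gives part (i) directly: strict positivity, strict monotonic decrease, and strict convexity of $g_0$.

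For part (ii), set $F(t):=\int_0^t g_0(s)\dint{s}$; then $F'=g_0>0$ and $F''=\dot g_0<0$, so $F$ is strictly positive for $t>0$, strictly increasing, and strictly concave. For part (iii), the function $\dot g_0$ has derivative $\ddot g_0>0$ and second derivative $\dddot g_0<0$, and $\dot g_0(t)<0$ itself as already noted, so $\dot g_0$ is strictly negative, strictly increasing, and strictly concave.

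There is no genuine obstacle here; the only point worth flagging is the justification of differentiation under the integral, which is immediate because $\rho(k)^n e^{-\rho(k)t}$ is bounded uniformly on any interval $t\in[t_0,\infty)$ with $t_0>0$ (and at $t=0$ the bound $\rho(k)^n\leq 4^n$ suffices). The strict (as opposed to weak) nature of each inequality relies solely on the fact that $\{k\in[-\pi,\pi):\rho(k)=0\}=\{0\}$ has Lebesgue measure zero.
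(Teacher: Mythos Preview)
Your argument is correct and follows the same route as the paper: from the Fourier representation \eqref{eq:ap:discr-heat-kernel} one reads off the sign pattern $g_0>0$, $\dot g_0<0$, $\ddot g_0>0$, $\dddot g_0<0$, and all three items follow at once. The paper states exactly these four strict inequalities and concludes in one line; your version simply spells out the differentiation under the integral and the strictness argument in more detail.
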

\begin{proof} The representation formula \eqref{eq:ap:discr-heat-kernel} implies
\begin{align*}
g_0\at{t}>0,\qquad \dot{g}_0\at{t}<0,\qquad \ddot{g}_0\at{t}>0,\qquad
\dddot{g_0}\at{t}<0
\end{align*} 
for all $t\geq0$, so all assertions follow immediately.
\end{proof}
Employing standard methods from asymptotic analysis one finds
\begin{align*}
t^{1/2}g_j\at{t}
  \quad\xrightarrow{t\to\infty}\quad\frac{1}{2\sqrt{\pi}},\qquad
  t^{3/2}\dot{g}_j\at{t}
  \quad\xrightarrow{t\to\infty}\quad-\frac{1}{4\sqrt{\pi}}
\end{align*}
as well as asymptotic laws for the long-time behavior of any discrete moment.
For our considerations in \S\ref{sect:ToyModel}, however, the following rather rough estimates 
are sufficient.

\begin{lemma}[temporal decay properties]
  \label{lem:discr-heat-kernel}
  There exist positive constants $c$ and $C$ such that
  \begin{gather}
    \label{lem:discr-heat-kernel.Eqn1}
    0 \leq g_j(t) \leq g_0(t) \leq C\at{1+t}^{-1/2},
    \\
    \label{lem:discr-heat-kernel.Eqn2}
    |\laplace g_j(t)|
    =
    |\dot g_j(t)| \leq -\dot g_0(t)
    \leq
    C  \at{1+t}^{-3/2}
    \\
    \intertext{and}
    \label{lem:discr-heat-kernel.Eqn3}
    g_0(t) \geq c\at{1+t}^{-1/2}
  \end{gather}
  hold for all $j\in\Zset$ and all $t\geq0$. Moreover, we have
  \begin{align}
   \label{lem:discr-heat-kernel.Eqn4}
   \sum_{j\in\Zset} g_j\at{t}=1,\qquad
   \sum_{j\in\Zset} \babs{\nabla_+ g_j\at{t}}^2\leq C\at{1+t}^{-3/2}
  \end{align}
  for all $t\geq0$ and some constant $C$.
\end{lemma}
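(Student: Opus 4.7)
My plan is to derive every estimate from the Fourier representation \eqref{eq:ap:discr-heat-kernel} together with the simple fact that $\rho(k)=2(1-\cos k)$ behaves like $k^2$ near $k=0$ and is bounded below by $c k^2$ on $[-\pi,\pi]$ (while being bounded above by $C k^2$ and globally by $4$). I would treat the short-time and long-time regimes separately and then glue them into a single $(1+t)^{-\alpha}$ bound.

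First I would establish \eqref{lem:discr-heat-kernel.Eqn1}. Nonnegativity $g_j(t)\geq 0$ follows from the discrete maximum principle applied to \eqref{eq:ap:heat-eq} (or from interpreting $g_j(t)$ as the transition probability of the continuous-time simple random walk, $g_j(t)=\mathtt{e}^{-2t}I_j(2t)$ with $I_j$ the modified Bessel function). The inequality $g_j(t)\leq g_0(t)$ is immediate from \eqref{eq:ap:discr-heat-kernel} by pulling $|\cos(jk)|\leq 1$ into the integrand. For the upper bound on $g_0(t)$ I would note that $g_0(t)\leq g_0(0)=1$ handles $t\leq 1$, while for $t\geq 1$ the change of variables $k=s/\sqrt{t}$ together with $\rho(k)\geq ck^2$ on $[-\pi,\pi]$ gives
\begin{equation*}
g_0(t)\leq \tfrac{1}{2\pi}\int_{-\pi}^{\pi}\mathtt{e}^{-c k^2 t}\dint k \leq \tfrac{C}{\sqrt{t}},
\end{equation*}
and combining both regimes yields the global bound $g_0(t)\leq C(1+t)^{-1/2}$. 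The lower bound \eqref{lem:discr-heat-kernel.Eqn3} is obtained symmetrically: for $t\leq 1$ one uses continuity and $g_0(0)=1$, whereas for $t\geq 1$ restricting the integral to $|k|\leq 1/\sqrt{t}$ and using $\rho(k)\leq Ck^2$ yields $g_0(t)\geq c/\sqrt{t}$.

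For \eqref{lem:discr-heat-kernel.Eqn2} I would differentiate \eqref{eq:ap:discr-heat-kernel} under the integral sign to obtain
\begin{equation*}
\dot g_j(t)=-\tfrac{1}{2\pi}\int_{-\pi}^{\pi}\rho(k)\,\mathtt{e}^{-\rho(k)t}\cos(jk)\dint k,
\end{equation*}
so $|\dot g_j(t)|\leq -\dot g_0(t)$ by the same cosine-bound argument, and $\Delta g_j=\dot g_j$ holds by \eqref{eq:ap:heat-eq}. The decay $-\dot g_0(t)\leq C(1+t)^{-3/2}$ is handled by the same short-time/long-time dichotomy: for $t\leq 1$ one uses $\rho(k)\leq 4$ and boundedness of the integral; for $t\geq 1$ the same rescaling $k=s/\sqrt{t}$ together with $\rho(k)\leq Ck^2$ produces the extra factor $t^{-1}$ relative to $g_0$, i.e.~$|\dot g_0(t)|\leq C t^{-3/2}$.

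Finally, for \eqref{lem:discr-heat-kernel.Eqn4} the identity $\sum_j g_j(t)=\widehat g(t,0)=\mathtt{e}^{-\rho(0)t}=1$ is immediate from $\rho(0)=0$ and the definition of the Fourier transform. For the squared-difference sum I would use Parseval's identity on $\Zset$: since the Fourier symbol of $\nabla_+$ is $\mathtt{e}^{\iu k}-1$ and $|\mathtt{e}^{\iu k}-1|^2=\rho(k)$,
\begin{equation*}
\sum_{j\in\Zset}\bigl|\nabla_+ g_j(t)\bigr|^2=\tfrac{1}{2\pi}\int_{-\pi}^{\pi}\rho(k)\,\mathtt{e}^{-2\rho(k)t}\dint k=-\dot g_0(2t),
\end{equation*}
and the bound follows from the estimate on $-\dot g_0$ already established. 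The main (minor) technical obstacle throughout is uniformly bridging the short-time and long-time regimes so that the estimates take the clean form $(1+t)^{-\alpha}$; this is handled by the splitting $t\leq 1$ versus $t\geq 1$ described above.
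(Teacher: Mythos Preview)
Your proposal is correct and follows essentially the same route as the paper: both use the Fourier representation together with the two-sided bound $ck^2\leq\rho(k)\leq k^2$ and a short-time/long-time splitting to obtain the $(1+t)^{-\alpha}$ estimates. The only notable difference is in \eqref{lem:discr-heat-kernel.Eqn4}: you compute $\sum_j|\nabla_+ g_j|^2$ via Parseval as $-\dot g_0(2t)$, whereas the paper uses discrete integration by parts and H\"older to bound it by $\|\laplace g(t)\|_\infty$; your identity is slightly sharper but both reduce to the already-established bound on $-\dot g_0$.
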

\begin{proof}
For $t\geq1$ we observe that 
\begin{equation*}
  \frac{1}{4} k^2 \leq \frac{4}{\pi^2} k^2 \leq \rho(k) \leq k^2
  \qquad
  \text{for all } \quad k \in [-\pi,\pi],
\end{equation*}
and this implies
\begin{equation*}
  \abs{g_j(t)}\leq g_0\at{t}
  \leq
  \frac{1}{2\pi}
  \int_{-\pi}^{+\pi} \exp\bat{ - k^2 t/4 } \dint{k}
  \leq
  \frac{1}{\pi \sqrt{t}}
  \int_{-\infty}^{+\infty} \exp\bat{ - k^2 } \dint{k}
  =
  \frac{\sqrt{\pi}}{\sqrt{t}}
\end{equation*}
 as well as
\begin{equation*}
  g_0(t)
  \geq
  \frac{1}{2\pi}
  \int_{-\pi}^{+\pi} \exp\bat{ - k^2 t } \dint{k}
  =
  \frac{1}{2\pi\sqrt{t}}
  \int_{-\pi\sqrt{t}}^{+\pi\sqrt{t}} \exp\bat{ - k^2 } \dint{k}
  \geq
  \frac{1}{2\pi\sqrt{t}}
  \int_{-\pi}^{+\pi} \exp\bat{ - k^2 } \dint{k}.
\end{equation*}
Combining these estimates with $0<g_0\at{1}\leq g_0\at{t}\leq g_0\at{0}=1$ for $0\leq{t}\leq1$ we readily obtain
\eqref{lem:discr-heat-kernel.Eqn1} and \eqref{lem:discr-heat-kernel.Eqn2}. Moreover, for $t\geq1$ we estimate
\begin{align*}
  |\laplace g_j(t)|
  =
  |\dot g_j(t)|
  &\leq
  \frac{1}{2\pi}
  \int_{-\pi}^{+\pi} \rho(k) \exp \bat{ - \rho(k) t} \dint{k}
  =-\dot{g}_0\at{t}\leq
  \frac{1}{2\pi}
  \int_{-\pi}^{+\pi} k^2 \exp \bat{ - k^2 t/4 } \dint{k}\leq
  \frac{2\sqrt{\pi}}{t^{3/2}}
\end{align*}
and this provides \eqref{lem:discr-heat-kernel.Eqn3} due to
$0<-\dot{g_0}\at{t}\leq -\dot{g}_0\at{0}=-\laplace g_0\at{0}=2$ for
all $t$. The discrete heat equation  \eqref{eq:ap:heat-eq} further ensures conservation of mass via $\sum_{j\in\Zset}
g_j\at{t}=\sum_{j\in\Zset}
g_j\at{0}=1$. In particular, using
discrete integration by parts as well as H\"older's inequality for series
we find
\begin{align*}
\sum_{j\in\Zset} \bat{\nabla_+{g_j}\at{t}}^2=-
\sum_{j\in\Zset} {g_j}\at{t}{\laplace g_j}\at{t}\leq \norm{\laplace g\at{t}}_\infty
\end{align*}
which implies the estimate in \eqref{lem:discr-heat-kernel.Eqn4} thanks to \eqref{lem:discr-heat-kernel.Eqn2}.
\end{proof}

A further key ingredient to our convergence proof in \S\ref{sect:ToyModel} are the following 
time-dependent H\"older estimates for $g_j\at{t}$.

\begin{lemma}[longtime behavior of spatial and temporal H\"older constants]
\label{Lem:App.Holder}
For each $\ga\in[0,1]$ there exists a constant $C_\ga$ such that
\begin{align*}
\sup\limits_{t_2\geq t_1}\frac{\babs{g_j\at{t_2}-g_j\at{t_1}}}{\abs{t_2-t_1}^\ga}\leq{C_\ga}\at{1+t_1}^{-\ga-1/2}
\end{align*}
holds for all $j\in\Zset$ and all $t_1>0$. Moreover, 
there exists a constant $C$ such that
  \begin{align*}
    \sup_{j_1,j_2\in\Zset}
    \frac{\babs{g_{j_2}\at{t}-g_{j_1}\at{t}}}{\abs{j_2-j_1}^{1/2}}
    \leq
    {C}\at{1+t}^{-3/4}
  \end{align*}
  holds for all $t>0$.
\end{lemma}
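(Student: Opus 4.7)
The plan is to handle the temporal and spatial estimates separately. Both rely on what is already available: the pointwise bounds in Lemma~\ref{lem:discr-heat-kernel}, and the explicit Fourier representation \eqref{eq:ap:discr-heat-kernel}.

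For the temporal estimate, I would first establish the two endpoint bounds $\ga=0$ and $\ga=1$ and then interpolate. At $\ga=0$, the monotonicity of $t\mapsto g_0(t)$ together with $0\leq g_j(t)\leq g_0(t)$ yields $\abs{g_j(t_2)-g_j(t_1)}\leq 2g_0(t_1)\leq C\at{1+t_1}^{-1/2}$. At $\ga=1$, the estimate $\abs{\dot g_j(s)}\leq C\at{1+s}^{-3/2}\leq C\at{1+t_1}^{-3/2}$ from \eqref{lem:discr-heat-kernel.Eqn2} combined with $g_j(t_2)-g_j(t_1)=\int_{t_1}^{t_2}\dot g_j(s)\,ds$ gives $\abs{g_j(t_2)-g_j(t_1)}\leq C\at{t_2-t_1}\at{1+t_1}^{-3/2}$. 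For general $\ga\in(0,1)$, a simple case split suffices. If $t_2-t_1\leq 1+t_1$, write $t_2-t_1=(t_2-t_1)^\ga (t_2-t_1)^{1-\ga}\leq (t_2-t_1)^\ga(1+t_1)^{1-\ga}$ and multiply into the $\ga=1$ bound; if $t_2-t_1\geq 1+t_1$, write $1=(t_2-t_1)^\ga(t_2-t_1)^{-\ga}\leq (t_2-t_1)^\ga(1+t_1)^{-\ga}$ and multiply into the $\ga=0$ bound. Both regimes produce the claimed $\at{t_2-t_1}^\ga\at{1+t_1}^{-\ga-1/2}$.

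For the spatial estimate, I would return to \eqref{eq:ap:discr-heat-kernel} and use the product formula $\cos(j_2 k)-\cos(j_1 k)=-2\sin\tfrac{(j_1+j_2)k}{2}\sin\tfrac{(j_2-j_1)k}{2}$. The Hölder gain comes from the elementary inequality $\abs{\sin x}\leq\min(\abs{x},1)\leq\abs{x}^{1/2}$, which implies $\abs{\cos(j_2 k)-\cos(j_1 k)}\leq\sqrt2\,\abs{j_2-j_1}^{1/2}\abs{k}^{1/2}$. Substituting this into \eqref{eq:ap:discr-heat-kernel} gives
\[
\abs{g_{j_2}(t)-g_{j_1}(t)}\leq C\,\abs{j_2-j_1}^{1/2}\int_{-\pi}^{+\pi}e^{-\rho(k)t}\abs{k}^{1/2}\,dk,
\]
so it remains to show that the integral is bounded by $C\at{1+t}^{-3/4}$. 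For $t\leq 1$ this is trivial since the exponential is at most $1$ and $\int_{-\pi}^\pi\abs{k}^{1/2}\,dk<\infty$. For $t\geq 1$ I use $\rho(k)\geq k^2/4$ on $[-\pi,\pi]$ and substitute $k=u/\sqrt t$, which turns the integral into $t^{-3/4}$ times a convergent Gaussian moment $\int_{\Rset}e^{-u^2/4}\abs{u}^{1/2}\,du$.

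I do not expect a genuine obstacle. The only place that needs care is the interpolation bookkeeping in the temporal step, and on the spatial side the sharpness $\tfrac12$ is dictated by the elementary $\abs{\sin x}\leq\abs{x}^{1/2}$ bound — any higher exponent would conflict with the fact that $g_j(t)$ is only piecewise (in $j$) smooth for $t=0$.
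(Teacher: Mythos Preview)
Your proposal is correct. For the temporal estimate your argument is essentially the same as the paper's: both integrate the derivative bound $\abs{\dot g_j(s)}\leq C(1+s)^{-3/2}$ from Lemma~\ref{lem:discr-heat-kernel}. The paper evaluates $\int_{t_1}^{t_2}(1+s)^{-3/2}\dint{s}$ exactly and then substitutes $1+t_2=s(1+t_1)$ to reduce to boundedness of a single function $f_\ga(s)=s^{-1/2}(s^{1/2}-1)/(s-1)^\ga$ on $[1,\infty)$; your endpoint-plus-case-split interpolation amounts to the same thing, just organised differently.

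For the spatial estimate you take a genuinely different route. The paper stays on the lattice side: it telescopes $g_{j_2}(t)-g_{j_1}(t)=\sum_{j=j_1}^{j_2-1}\nabla_+ g_j(t)$, applies Cauchy--Schwarz in $j$, and then invokes the $\ell^2$-gradient bound $\sum_{j}\abs{\nabla_+ g_j(t)}^2\leq C(1+t)^{-3/2}$ from \eqref{lem:discr-heat-kernel.Eqn4} (which in turn comes from discrete integration by parts and the Laplacian bound). You instead work on the Fourier side, using the product formula for $\cos(j_2k)-\cos(j_1k)$ and the elementary inequality $\abs{\sin x}\leq\abs{x}^{1/2}$ to extract the factor $\abs{j_2-j_1}^{1/2}$ directly, then estimating a scalar integral $\int_{-\pi}^{\pi}e^{-\rho(k)t}\abs{k}^{1/2}\dint{k}$ by rescaling. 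Both arguments are short; the paper's has the advantage of reusing \eqref{lem:discr-heat-kernel.Eqn4} which is already needed elsewhere, while yours is self-contained and makes it transparent how to obtain other H\"older exponents in $j$ by replacing $\abs{\sin x}\leq\abs{x}^{1/2}$ with $\abs{\sin x}\leq\abs{x}^{\theta}$ for $\theta\in[0,1]$.
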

\begin{proof}
Let $j\in\Zset$ and $0<t_1<t_2$ be fixed. Thanks to \eqref{lem:discr-heat-kernel.Eqn2} we estimate
\begin{align*}
\babs{g_j\at{t_2}-g_j\at{t_1}}\leq\int_{t_1}^{t_2}\babs{\dot{g}_j\at{t}}\dint{t}
\leq C \int_{t_1}^{t_2}\at{1+t}^{-3/2}\dint{t}=C\babs{\at{1+t_1}^{-1/2}-\at{1+t_2}^{-1/2}},
\end{align*}
and writing $1+t_2=s\at{1+t_1}$ with $s\geq1$ we get
\begin{align*}
\frac{\abs{g_j\at{t_2}-g_j\at{t_1}}}{\abs{t_2-t_1}^\ga}\leq \frac{C f_\ga\at{s}}{\at{1+t_1}^{\ga+1/2}},\qquad
f_\ga\at{s}:=\frac{1}{s^{1/2}}\frac{s^{1/2}-1}{\at{s-1}^\ga}.
\end{align*}
We readily check that the function $f_\ga$ is bounded on $[1,\infty)$,
so the first claim follows by taking the supremum over $s\geq1$.
\par
Now let $t>0$ and $j_1,j_2\in\Zset$ with $j_2>j_1$ be arbitrary. By
H\"older's inequality for series we then find
\begin{align*}
\babs{g_{j_2}\at{t}-g_{j_1}\at{t}}^2&=\at{\sum_{j=j_1}^{j_2-1}\babs{\nabla_+ g_j\at{t}}}^2
\leq \at{\sum_{j=j_1}^{j_2-1}\abs{\nabla_+ g_j\at{t}}^2}\at{\sum_{j=j_1}^{j_2-1}1}
\\&\leq \at{\sum_{j\in\Zset}\abs{\nabla_+ g_j\at{t}}^2}\bat{j_2-j_1},
\end{align*} 
and the second assertion follows from \eqref{lem:discr-heat-kernel.Eqn4}.
\end{proof}


\section*{Acknowledgments}

The authors thank Wolfgang Dreyer for pointing them to the problem
discussed in the paper. Part of the work has been done at the
Harcourt Arms, Cranham Terrace, Jericho, whose hospitality is
gratefully acknowledged.


\bibliographystyle{alpha}
\bibliography{paper}

\begin{thebibliography}{BBMN12}

\bibitem[ABF91]{AlBaFu91}
N.~Alikakos, P.~W. Bates, and G.~Fusco.
\newblock Slow motion for the {C}ahn-{H}illiard equation in one space
  dimension.
\newblock {\em J. Differential Equations}, 90(1):81--135, 1991.

\bibitem[BBMN12]{BeBeMaNo12}
G.~Bellettini, L.~Bertini, M.~Mariani, and M.~Novaga.
\newblock Convergence of the {O}ne-{D}imensional {C}ahn--{H}illiard {E}quation.
\newblock {\em SIAM J. Math. Anal.}, 44(5):3458--3480, 2012.

\bibitem[BGN13]{BeGeNo13}
G.~Bellettini, C.~Geldhauser, and M.~Novaga.
\newblock Convergence of a semidiscrete scheme for a forward-backward parabolic
  equation.
\newblock {\em Adv. Differential Equations}, 18(5/6):495--522, 2013.

\bibitem[BH92]{BrHi92}
L.~Bronsard and D.~Hilhorst.
\newblock On the slow dynamics for the {C}ahn-{H}illiard equation in one space
  dimension.
\newblock {\em Proc. Roy. Soc. London Ser. A}, 439(1907):669--682, 1992.

\bibitem[BNP06]{BeNoPa06}
G.~Bellettini, M.~Novaga, and E.~Paolini.
\newblock Global solutions to the gradient flow equation of a nonconvex
  functional.
\newblock {\em SIAM J. Math. Anal.}, 37(5):1657--1687, 2006.

\bibitem[BS96]{SpBr96}
M.~Brokate and J.~Sprekels.
\newblock {\em Hysteresis and phase transitions}, volume 121 of {\em Applied
  Mathematical Sciences}.
\newblock Springer-Verlag, New York, 1996.

\bibitem[EG09]{EsGr09}
S.~Esedo{\=g}lu and J.~B. Greer.
\newblock Upper bounds on the coarsening rate of discrete, ill-posed nonlinear
  diffusion equations.
\newblock {\em Comm. Pure Appl. Math.}, 62(1):57--81, 2009.

\bibitem[Ell85]{Elliott85}
C.~M. Elliott.
\newblock The {S}tefan problem with a nonmonotone constitutive relation.
\newblock {\em IMA J. Appl. Math.}, 35(2):257--264, 1985.
\newblock Special issue: IMA conference on crystal growth (Oxford, 1985).

\bibitem[EP04]{EvPo04}
L.~C. Evans and M.~Portilheiro.
\newblock Irreversibility and hysteresis for a forward-backward diffusion
  equation.
\newblock {\em Math. Models Methods Appl. Sci.}, 14(11):1599--1620, 2004.

\bibitem[Eva98]{Evans98}
L.~C. Evans.
\newblock {\em Partial differential equations}, volume~19 of {\em Graduate
  Studies in Mathematics}.
\newblock American Mathematical Society, Providence, RI, 1998.

\bibitem[GN11]{GeNo11}
C.~Geldhauser and M.~Novaga.
\newblock A semidiscrete scheme for a one-dimensional {C}ahn-{H}illiard
  equation.
\newblock {\em Interfaces Free Bound.}, 13(3):327--339, 2011.

\bibitem[Hil89]{Hilpert89}
M.~Hilpert.
\newblock On uniqueness for evolution problems with hysteresis.
\newblock In {\em Mathematical models for phase change problems (\'{O}bidos,
  1988)}, volume~88 of {\em Internat. Ser. Numer. Math.}, pages 377--388.
  Birkh\"auser, Basel, 1989.

\bibitem[HPO04]{HoPaOt04}
D.~Horstmann, K.~J. Painter, and H.~G. Othmer.
\newblock Aggregation under local reinforcement: from lattice to continuum.
\newblock {\em European J. Appl. Math.}, 15(5):546--576, 2004.

\bibitem[MTT09]{MaTeTe09}
C.~Mascia, A.~Terracina, and A.~Tesei.
\newblock Two-phase entropy solutions of a forward-backward parabolic equation.
\newblock {\em Arch. Ration. Mech. Anal.}, 194(3):887--925, 2009.

\bibitem[NCP91]{NoCoPe91}
A.~Novick-Cohen and R.~L. Pego.
\newblock Stable patterns in a viscous diffusion equation.
\newblock {\em Trans. Amer. Math. Soc.}, 324(1):331--351, 1991.

\bibitem[Plo94]{Plotnikov94}
P.~I. Plotnikov.
\newblock Passing to the limit with respect to viscosity in an equation with
  variable parabolicity direction.
\newblock {\em Differential Eqns.}, 30(4):614--622, 1994.

\bibitem[PM90]{PeMa90}
P.~Perona and J.~Malik.
\newblock Scale-space and edge-detection using anisotropic diffusion.
\newblock {\em IEEE Trans. Pattern Anal. Machine Intell.}, 12(7):629--639,
  1990.

\bibitem[Vis94]{Vis94}
A.~Visintin.
\newblock {\em Differential models of hysteresis}, volume 111 of {\em Applied
  Mathematical Sciences}.
\newblock Springer-Verlag, Berlin, 1994.

\bibitem[Vis06]{Visintin06}
A.~Visintin.
\newblock Quasilinear parabolic {P}.{D}.{E}.s with discontinuous hysteresis.
\newblock {\em Ann. Mat. Pura Appl.}, 185(4):487--519, 2006.

\end{thebibliography}

\end{document}
